\title{Tensor extriangulated categories}
\date{\today}
\keywords{Extriangulated category, higher extensions, cup product, monoidal category, tensor triangular geometry}
\subjclass[2020]{Primary 18M05; Secondary 18E05, 18G80, 18A23, 18G99}
\author[R.~Bennett-Tennenhaus]{Raphael Bennett-Tennenhaus}
\address{Raphael Bennett-Tennenhaus,
Faculty of Mathematics,
Bielefeld University,
PO Box 100 131,
33501 Bielefeld,
Germany,
ORCiD: 0000-0003-2946-4223}
\email{raphaelbennetttennenhaus@gmail.com}
\author[I.~Goodbody]{Isambard Goodbody}
\address{Isambard Goodbody,
School of Mathematics and Statistics,
University of Glasgow,
University Place,
Glasgow G12 8QQ,
United Kingdom}
\email{isambard.goodbody@glasgow.ac.uk}
\author[J.~C.~Letz]{Janina C. Letz}
\address{Janina~C.~Letz,
Faculty of Mathematics,
Bielefeld University,
PO Box 100 131,
33501 Bielefeld,
Germany,
ORCiD: 0000-0002-5497-8296}
\email{jletz@math.uni-bielefeld.de}
\author[A.~Shah]{Amit Shah}
\address{Amit Shah,
Department of Mathematics, 
Aarhus University, 
Ny Munkegade 118, 
8000 Aarhus, 
Denmark,
ORCiD: 0000-0002-6623-8228}
\email{a.shah1728@gmail.com}
\begin{document}

\begin{abstract}
A tensor extriangulated category is an extriangulated category with a symmetric monoidal structure that is compatible with the extriangulated structure. To this end we define a notion of a biextriangulated functor $\cat{A} \times \cat{B} \to \cat{C}$, with compatibility conditions between the components. We have two versions of compatibility conditions, the stronger depending on the higher extensions of the extriangulated categories. We give many examples of tensor extriangulated categories. Finally, we generalise Balmer's classification of thick tensor ideals to tensor extriangulated categories.
\end{abstract}

\maketitle

\setcounter{tocdepth}{1}
\tableofcontents

%%%%%%%%%%%%%%%%%%%%%%%%%%%%%%%%%%%%%%%%%%%%%%%%%%%%%%%%%%%
%%%%%%%%%%%%%%%%%%%%%%%%%%%%%%%%%%%%%%%%%%%%%%%%%%%%%%%%%%%
\section{Introduction}
\label{sec:introduction}

Monoidal categories play an important role in algebra, representation theory and various other areas. The module category of a commutative ring and of a cocommutative Hopf algebra over a field naturally possess a monoidal structure. These give rise to tensor triangulated categories and finite tensor categories, which have become their own areas of study. The symmetric monoidal structure is at the heart of tensor triangular geometry, allowing one to attach a topological space to a tensor triangulated category; see \cite{Balmer:2005}. Further, a monoidal category is the natural setting for the categorical notion of (co)algebra objects. 

The aim of this paper is to define a compatibility condition of a monoidal structure with an extriangulated structure. Extriangulated categories were introduced by Nakaoka and Palu \cite{Nakaoka/Palu:2019}, and provide a generalisation of both exact and triangulated categories. An extriangulated structure is given by a class of extriangles, generalising exact sequences and exact triangles, and extension functors, generalising $\operatorname{Ext}^1$ of exact categories and $\Hom{}{-}{\susp -}$ of triangulated categories. 

We introduce \emph{biextriangulated} functors in \cref{sec_biextr_functor} building on extriangulated functors introduced in  \cite{BennettTennenhaus/Shah:2021}. Roughly speaking, a biextriangulated functor is a functor $\cat{A} \times \cat{B} \to \cat{C}$ that is extriangulated in each component, and where each associated natural transformation between extension functors is compatible with the other component. For many purposes this definition is adequate, though it does not recover bitriangulated functors, which involve $\Hom{}{-}{\susp^{2} -}$. To recover the notion of a bitriangulated functor we introduce the notion of a \emph{strong} biextriangulated functor for which we use \emph{higher extensions}. 

For exact and triangulated categories the higher extensions are $\Ext[n]{}{-}{-}$ and $\Hom{}{-}{\susp^{n}-}$. In \cref{sec-higer-exts} we recall the construction of higher extension functors: in the presence of enough injectives or projectives following \cite{Herschend/Liu/Nakaoka:2022}; and  using coends following \cite{Gorsky/Nakaoka/Palu:2021}. When any two of these constructions exist they are isomorphic. The higher extensions come equipped with a cup product. We show that whenever higher extensions exist, the corresponding cup product is compatible with  any extriangulated functor, natural transformation or adjunction. 

Finally, in \cref{sec-tensor-extriangulated} we define a category to be \emph{monoidal extriangulated} if it is monoidal, extriangulated, the tensor product is biextriangulated and the associator and unitors are extriangulated natural transformations. If the monoidal structure is symmetric we say the category is \emph{tensor extriangulated}. The higher extensions of the unit, together with the cup product, are the homogeneous components of a graded ring. This ring generalises the \emph{Ext algebra} of finite tensor  categories as well as the \emph{graded central ring} (of the unit) from tensor triangular  geometry; see for example \cite{Etingof-Ostrik:2004} and \cite{Balmer-spectra:2010}. In \cref{extension_ring_commutative} we see this is a particularly nice ring when the monoidal structure is a strong biextriangulated category:
\begin{introlem}
    If $(\cat{A},\BE,\fs, \otimes, \unit)$ is a strong monoidal extriangulated category then the cup product makes $\BE^*(\unit,\unit)=\{\BE^{n}(\unit,\unit)\}_{n \geqslant 0}$ into a graded-commutative graded ring. 
\end{introlem}

At the end of \cref{sec-biextria} and in \cref{sec:example_tec} we present various examples of (strong) biextriangulated functors and (strong) monoidal extriangulated categories:
\begin{itemize}
\item exact category of (bi)additive functors; see \cref{example-flat-functors,example_flat_bifunctors};
\item (stabilisation of the) category of matrix factorisations; see \cref{example-matrix-factorisation};
\item category of flat modules over a commutative ring; see \cref{example-local-units-flat};
\item (stabilisation of the) category of modules over a cocommutative Hopf algebra over a field; see \cref{example_Hopf}; and 
\item extriangulated structure induced by pure-exact triangles; see \cref{subsec-pure-exact}.
\end{itemize}
We also look into the behaviour of (strong) biextriangulated functors under stabilisation of extriangulated categories in \cref{subsec:stabilisation,subsec:stabilisation_ttc} and extriangulated substructures in \cref{sec:relative-structure-to-generate-new-examples-from-old}. 

In closing we generalise Balmer's classification \cite{Balmer:2005} of radical thick tensor ideals of a tensor triangulated category to tensor extriangulated categories. We use the approaches of Kock and Pitsch \cite{Kock/Pitsch:2017} and Buan, Krause and Solberg \cite{Buan/Krause/Solberg:2005}, and the proofs can be directly transferred to the extriangulated setting. 
\begin{introthm}
\label{introtheorem-full-radical-bijection}\renewcommand{\qedsymbol}{}
Let $(\cat{A},\BE,\fs,\otimes, \unit)$ be a tensor extriangulated category. We assume the class of radical thick tensor ideals $\Rad(\cat{A})$ is a set. Then there exists a spectral space $\balmer{\cat{A}}$ and a bijection
\begin{equation*}
\Rad(\cat{A}) \longleftrightarrow \left\{\text{Thomason closed subsets of } \balmer{\cat{A}}\right\}
\nospacepunct{.}
% \,. \pushQED{\qed} \qedhere \popQED
\end{equation*}
\end{introthm}
This result is contained in \cref{theorem-full-radical-bijection}. We compute the spectral space $\balmer{\cat{A}}$ for the category of projective modules over various commutative rings in \cref{teg_projective_modules} and for stabilisations of extriangulated categories in \cref{teg_stabilisation}. 

\begin{ack} 
R.~B-T.\@ is grateful to Isaac Bird for helpful discussions concerning \cref{subsec-pure-exact}. R.~B-T.\@ and A.~S. \@ were partly funded by the Danish National Research Foundation (DNRF156); the Independent Research Fund Denmark (1026-00050B); the Aarhus University Research Foundation (AUFF-F-2020-7-16). R.~B-T.\@ and J.~C.~L.\@ were partly funded by the Deutsche Forschungsgemeinschaft (DFG, German Research Foundation)---Project-ID 491392403---TRR 358. J.~C.~L.\@ was partly funded by the Alexander von Humboldt Foundation in the framework of a Feodor Lynen research fellowship endowed by the German Federal Ministry of Education and Research. I. G. was funded by a PhD scholarship from the Carnegie Institute for the Universities of Scotland. 
\end{ack}

%%%%%%%%%%%%%%%%%%%%%%%%%%%%%%%%%%%%%%%%%%%%%%%%%%%%%%%%%%%
%%%%%%%%%%%%%%%%%%%%%%%%%%%%%%%%%%%%%%%%%%%%%%%%%%%%%%%%%%%
\section{Higher extensions in extriangulated categories}
\label{sec-higer-exts}

The notion of extriangulated categories generalises exact categories and triangulated categories. 
They were introduced by Nakaoka and Palu \cite{Nakaoka/Palu:2019}. 
There exist notions of higher extensions in exact categories (under some mild conditions) and in triangulated categories. 
These have been unified for extriangulated categories in \cite{Liu/Nakaoka:2019,Herschend/Liu/Nakaoka:2022,Gorsky/Nakaoka/Palu:2021}. In this section we recall the definitions and constructions and discuss some properties.

%%%%%%%%%%%%%%%%%%%%%%%%%%%%%%%%%%%%%%%%%%%%%%%%%%%%%%%%%%%
\subsection{Extriangulated categories}

An extriangulated category $\cat{A} = (\cat{A},\BE,\fs)$ consists of an additive category $\cat{A}$, a functor $\BE \colon \op{\cat{A}} \times \cat{A} \to \abcat$, additive in each component, to the category $\abcat$ of abelian groups, and a correspondence $\fs $ associating  an equivalence class $\fs (d)$ of a 3-term complex $X \to W \to Y$ to each $X,Y \in \cat{A}$ and $d \in \BE(Y,X)$,  satisfying some axioms; see \cite[Definition 2.12]{Nakaoka/Palu:2019}. 
By an \emph{extriangle} we mean a representative of an equivalence class given by $\fs$, written 
\[
\begin{array}{cc}
X \xrightarrow{f} W \xrightarrow{g} Y \xdashedrightarrow{d}
&
\text{where }
\fs(d) = [X \xrightarrow{f} W \xrightarrow{g} Y]\,. 
\end{array}
\]
Let $R$ be a commutative ring. An extriangulated category $(\cat{A},\BE,\fs)$ is \emph{$R$-linear}, if the underlying category $\cat{A}$ is $R$-linear, the functor $\BE$ factors through the forgetful functor $\Mod{R} \to \abcat$ and the induced functor $\op{\cat{A}} \times \cat{A} \to \Mod{R}$ is $R$-bilinear; see \cite[Section~3.1]{Iyama/Nakaoka/Palu:2024}. We denote the functor $\op{\cat{A}} \times \cat{A} \to \Mod{R}$ also by $\BE$. Every extriangulated category is trivially $\BZ$-linear. 

Like exact and triangulated categories, to be an extriangulated category is to equip a category with additional structure; it is not a property of the category. Any exact or triangulated category has a natural extriangulated structure:

\begin{example} \label{example_exact}
Let $\cat{E}$ be a (Quillen-)exact category. This means $\cat{E}$ is equipped with a class of short exact sequences. Exact categories were introduced in \cite[Section~2]{Quillen:1973}, for more details see \cite{Buehler:2010}. When $\cat{E}$ is an essentially small category or $\cat{E}$ has enough $\cat{E}$-projectives or enough $\cat{E}$-injectives, then $\cat{E}$ is an extriangulated category with $\BE(Y,X) \coloneqq \Ext[1]{\cat{E}}{Y}{X}$, the collection of Baer equivalence classes $[X \to W \to Y]$, and $\fs$ the identity; for more details see \cite[Example~2.13]{Nakaoka/Palu:2019}.
% \begin{equation*}
%  = \set{| \text{equivalence classes of exact sequence}}
% \end{equation*}
 The assumptions on $\cat{E}$ are sufficient to ensure that $\BE(Y,X)$ is a set. 
\end{example}

\begin{example} \label{example_triangulated}
Let $\cat{T}$ be a triangulated category with suspension functor $\susp$. This means $\cat{T}$ is equipped with a class of exact triangles. Triangulated categories were introduced by Verdier \cite{Verdier:1996}. Then $\cat{T}$ is an extriangulated category with 
\begin{equation*}
\BE(Y,X) \coloneqq \Hom{\cat{T}}{Y}{\susp X} \quad \text{and} \quad \fs(d) \coloneqq [X \to \susp^{-1} \cone(d) \to Y]
\end{equation*}
for any $d \in \BE(Y,X)$; for more details see \cite[Proposition~3.22(1)]{Nakaoka/Palu:2019}. 
\end{example}

Other examples of extriangulated categories include extension-closed full subcategories of triangulated categories \cite[Remark~2.18]{Nakaoka/Palu:2019}, and certain ideal quotients of exact categories \cite[Proposition~3.30]{Nakaoka/Palu:2019}. We discuss the latter in more detail in \cref{subsec:stabilisation}. 

In \Cref{example_triangulated,example_exact} a notion of higher extensions exists. Motivated by higher extensions for exact categories, there are two constructions for higher extensions in extriangulated categories: via coends by \cite{Gorsky/Nakaoka/Palu:2021} when the underlying category is essentially small, and via syzygies or cosyzygies by \cite{Liu/Nakaoka:2019,Herschend/Liu/Nakaoka:2022} when the extriangulated category has enough projectives or injectives, respectively. In the following sections we recall both constructions.

%%%%%%%%%%%%%%%%%%%%%%%%%%%%%%%%%%%%%%%%%%%%%%%%%%%%%%%%%%%
\subsection{Higher extensions via coends} \label{higher_ext_coend}

We briefly recall the definition of a coend. Let $\mathsf{F} \colon \op{\cat{A}} \times \cat{A} \to \cat{B}$ be a functor where $\cat{A}$ is essentially small and $\cat{B}$ has set-indexed colimits. A \emph{coend} of $\mathsf{F}$ is the coequaliser of the diagram
\begin{equation*}
\begin{tikzcd}
\coprod\limits_{V,V' \in \sk(\cat{A})} \coprod\limits_{f \in \Hom{\cat{A}}{V}{V'}} \mathsf{F}(V',V) \ar[r,shift left] \ar[r,shift right] & \coprod\limits_{W \in \sk(\cat{A})} \mathsf{F}(W,W) \nospacepunct{,}
\end{tikzcd}
\end{equation*}
defined by the morphisms $\mathsf{F}(f,V)$ and $\mathsf{F}(V',f)$ where  $\sk(\cat{A})$ denotes the skeleton of $\cat{A}$, ensuring the coproducts are set-indexed.
%; cf.\@ \cite[Definiton~1.5.15]{Riehl:2016}. 
In particular a coend is a colimit and, if it exists, is unique up to unique isomorphism. It is denoted by $\int^W \mathsf{F}(W,W)$, and is equipped with morphisms $\mathsf{F}(X,X) \to \int^W \mathsf{F}(W,W)$ for every $X \in \cat{A}$.

Let $R$ be a commutative ring and let $(\cat{A},\BE,\fs)$ be an essentially small $R$-linear extriangulated category. The \emph{higher extensions} are functors
\begin{equation} \label{defn_higher_ext_coend}
\begin{gathered}
\BE^n \colon \op{\cat{A}} \times \cat{A} \to \Mod{R} \quad \text{given by} \\
\BE^0(Y,X) \coloneqq \Hom{\cat{A}}{Y}{X} \quad \text{and} \quad \BE^n(Y,X) \coloneqq \int^W \BE^{n-1}(W,X) \otimes_R \BE(Y,W)
\end{gathered}
\end{equation}
for $n \geq 1$; that is, the object $\BE^n(Y,X)$ is the coend of the functor $\BE^{n-1}(-,X) \otimes_R \BE(Y,-)$. By \cite[Proposition~2.2.1]{Loregian:2021}, we have $\BE^1 = \BE$. Using the universality of the colimit, it is straightforward to check that $\BE^n(Y,X)$ is functorial in $X$ and $Y$, and using also that tensoring over $R$ is right exact and commutes with colimits, one has natural isomorphisms
\begin{equation}
\label{cup-prod-coend-prelims}
\int^W \BE^i(W,X) \otimes_R \BE^j(Z,W) \xrightarrow{\cong}\BE^n(Z,X)  
\end{equation}
for any integers $i,j\geq 0$ with $i+j = n$. This immediately yields the \emph{cup product}
\begin{equation} \label{cup_product_coend}
\smile \colon \BE^i(Y,X) \otimes_R \BE^j(Z,Y) \to \BE^n(Z,X)
\end{equation}
given by composing  the morphism in  \eqref{cup-prod-coend-prelims} with the canonical morphism 
\[
\BE^{i}(Y,X)\otimes_{R}\BE^{j}(Z,Y)\to \int^W \BE^i(W,X) \otimes_R \BE^j(Z,W)\,. 
\]
This is motivated by \cite[Definition~3.10]{Gorsky/Nakaoka/Palu:2021} where the cup product is introduced for $i+j=1$. We have 
\begin{equation} \label{higher_ext_coend_Hom}
% \int^W \Hom{\cat{A}}{W}{X} \otimes_R \BE^n(Y,W) = \BE^n(Y,X) \quad \text{and} \quad f \smile d = \BE^n(Z,f)(d)
f \smile d = \BE^n(Z,f)(d) \quad \text{and} \quad e \smile g = \BE^n(g,X)(e)
\end{equation}
for morphisms $f \colon Y \to X$ and $g \colon Z \to Y$ in $\cat{A}$ and $d \in \BE^n(Z,Y)$ and $e \in \BE^n(Y,X)$. 

This construction of higher extension groups for exact categories goes back to \cite[Section~4]{Yoneda:1960}. For extriangulated categories this was foreshadowed in \cite[Remark~5.10]{Nakaoka/Palu:2019} for the case where $i=j=1$ in \cref{cup_product_coend}. 
It then was extended to any $i,j\geq 0$ in \cite{Gorsky/Nakaoka/Palu:2021}. 

\begin{remark}
The construction in \cref{defn_higher_ext_coend} can be done more generally to obtain a ``folding'' of $R$-bilinear functors. This defines a tensor product of $R$-bilinear functors. The tensor product already appears in \cite[Section~4]{Yoneda:1960};  for a short survey, see \cite{ElKaoutit:2020}. We discuss this later in \cref{example-flat-functors}. 
\end{remark}

\begin{example} \label{example_exact_coend}
Let $\cat{E}$ be an exact category that is essentially small. We consider $\cat{E}$ as an extriangulated category, as in \cref{example_exact}. Roughly speaking, the coend acts as gluing exact sequences. This means $\BE^n(Y,X)$ is the set of equivalence classes
\begin{equation*}
% [X = X_0 \xrightarrow{f_0} X_1 \xrightarrow{f_1}  \cdots  \xrightarrow{f_n} X_{n+1} = Y],
[X = X_{0}\to  X_{1} \to  \cdots \to  X_{n+1} = Y]\,,
\end{equation*}
where $X_{1},\dots,X_{n}\in \cat{E}$  and there is a commutative diagram of the form
\[
% \begin{tikzcd}
%     	X^{0} \arrow{rr}{d_{X}^{0}}\arrow[swap, equals]{dr}{g^{0}}&&
%         X^{1}\arrow[swap]{dr}{g^{1}} \arrow{rr}{d_{X}^{1}}&&
%        \cdots \arrow{rr}{d_{X}^{2}}\arrow[swap]{dr}{g^{n}}&&
%         X^{n+1} 
%         \\
%         {} &
%         W^{0}\arrow[swap]{ur}{f^{0}}
%         &&W^{1}\arrow[swap]{ur}{f^{1}}&&
%         W^{n}\arrow[swap,equals]{ur}{f^{n}}
%         &
% \end{tikzcd}
\begin{tikzcd}[column sep = 0.3cm]
    	X^{0} \arrow{rr}\arrow[swap, equals]{dr}&&
        X^{1}\arrow[swap]{dr} \arrow{rr}&&
       \cdots \arrow{rr}\arrow[swap]{dr}&&
        X^{n}\arrow{rr}\arrow[swap]{dr}&&
        X^{n+1} 
        \\
        {} &
        W^{0}\arrow[swap]{ur}
        &&W^{1}\arrow[swap]{ur}&&
        W^{n-1}\arrow[swap]{ur}
        &&W^{n}\arrow[swap,equals]{ur}&
\end{tikzcd}
\]
such that 
$W_{i} \to  X_{i+1} \to W_{i+1}$ is exact for each  $i=0,\dots,n-1$; see \cite[Section~3.4]{Yoneda:1954} and \cite[Section~4.3]{Yoneda:1960}.
% for each  $i=0,\dots,n-1$  the morphism $f_{i}$ factors as $X_{i} \xrightarrow{h_{i}}W_{i} \xrightarrow{g_{i+1}} X_{i+1}$ such that  $W_{i} \xrightarrow{g_{i+1}}  X_{i+1} \xrightarrow{h_{i+1}} W_{i+1}$ is exact and such that $W_{0} = X$, $W_{n} = Y$, $h_{0}=\id_{X}$ and $g_{n+1}=\id_{Y}$
 The cup product is given by
\begin{equation*}
\begin{aligned}
[X = X_0 \to \cdots \to X_{n+1} & = Y] \smile [Y = Y_0 \to \cdots \to Y_{m+1} = Z] \\
&= [X = X_0 \to \cdots \to X_n \to Y_1 \to \cdots \to Y_{m+1} = Z]
\end{aligned}
\end{equation*}
where the morphism $X_n \to Y_1$ is the composition $X_n \to Y \to Y_1$.
\end{example}

\begin{example} \label{example_triangulated_coend}
Let $\cat{T}$ be a triangulated category with suspension functor $\susp$. We consider $\cat{T}$ as an extriangulated category, as in \cref{example_triangulated}. By \cite[Corollary~3.23]{Gorsky/Nakaoka/Palu:2021} we have
\begin{equation*}
\BE^n(Y,X) = \Hom{\cat{T}}{Y}{\susp^n X}\,,
\end{equation*}
and for $d \in \BE^i(Y,X)$ and $e \in \BE^j(Z,Y)$ the cup product is
\begin{equation*}
d \smile e = (\susp^j d) \circ e \in \Hom{\cat{T}}{Z}{\susp^{i+j} X} = \BE^{i+j}(Z,X)\,.
\end{equation*}
\end{example}

%%%%%%%%%%%%%%%%%%%%%%%%%%%%%%%%%%%%%%%%%%%%%%%%%%%%%%%%%%%
\subsection{Higher extensions via syzygies or cosyzygies} \label{higher_ext_proj_inj}

We recall from \cite[Section~3.4]{Nakaoka/Palu:2019} the definition of an extriangulated category with enough projectives. Let $(\cat{A},\BE,\fs)$ be an extriangulated category. An object $P \in \cat{A}$ is \emph{$\BE$-projective}, if $\BE(P,X) = 0$ for any $X \in \cat{A}$. We say $(\cat{A},\BE,\fs)$ has \emph{enough $\BE$-projectives} if for any $X \in \cat{A}$ there exists an $\BE$-projective object $P$ and an extriangle $W \to P \to X \dashrightarrow$. The object $W$ is called a \emph{syzygy} of $X$. The syzygy of an object $X$ need not be unique. When $\cat{A}$ has enough projectives, we fix a choice of syzygy, and denote it by $\Omega X$; cf.\@ \cite[Assumption~3.3]{Herschend/Liu/Nakaoka:2022}. We set
\begin{equation*}
\Omega^0 X \coloneqq X \quad \text{and} \quad \Omega^n X \coloneqq \Omega (\Omega^{n-1} X)
\end{equation*}
for $n \geq 1$. 

Let $R$ be a commutative ring. In an $R$-linear extriangulated category $(\cat{A},\BE,\fs)$ with enough projectives, the \emph{higher extensions} are functors
\begin{equation} \label{higher_ext_syz}
\begin{gathered}
\BE^{n}_{\Omega} \colon \op{\cat{A}} \times \cat{A} \to \Mod{R} \quad \text{given by} \\
\BE^{0}_{\Omega}(Y,X) \coloneqq \Hom{\cat{A}}{Y}{X} \quad \text{and} \quad \BE^{n}_{\Omega}(Y,X) \coloneqq \BE(\Omega^{n-1} Y,X)
\end{gathered}
\end{equation}
for $n \geq 1$; these functors are well-defined and independent of the choice of syzygy by \cite[Proposition~3.4]{Herschend/Liu/Nakaoka:2022}. 

Dually, in an extriangulated category $(\cat{A},\BE,\fs)$ an object $I \in \cat{A}$ is \emph{$\BE$-injective}, if $\BE(Y,I) = 0$ for any $Y \in \cat{A}$. We say $(\cat{A},\BE,\fs)$ has \emph{enough $\BE$-injectives} if for any $X \in \cat{A}$ there exists an $\BE$-injective object $I$ and an extriangle $X \to I \to W \dashrightarrow$. The object $W$ is a \emph{cosyzygy} of $X$. In an $R$-linear extriangulated category $(\cat{A},\BE,\fs)$ with enough injectives we fix a choice of coszyzy $\susp X$ for every $X \in \cat{T}$. The \emph{higher extensions} are the functors
\begin{equation} \label{higher_ext_cosyz}
\begin{gathered}
\BE^{n}_{\susp} \colon \op{\cat{A}} \times \cat{A} \to \Mod{R} \quad \text{given by} \\
\BE^{0}_{\susp}(Y,X) \coloneqq \Hom{\cat{A}}{Y}{X} \quad \text{and} \quad \BE^{n}_{\susp}(Y,X) \coloneqq \BE(Y,\susp^{n-1} X)
\end{gathered}
\end{equation}
for $n \geq 1$; these functors are well-defined and independent of the choice of cosyzygy by \cite[Proposition~3.4]{Herschend/Liu/Nakaoka:2022}.

When the extriangulated category $(\cat{A},\BE,\fs)$ has enough $\BE$-injectives and enough $\BE$-projectives, the definitions of higher extensions via syzygies and cosyzygies are naturally isomorphic by \cite[Lemma~5.1]{Liu/Nakaoka:2019}.

\begin{example}
Let $\cat{T}$ be a triangulated category with suspension $\susp$. We consider $\cat{T}$ as an extriangulated category as in \cref{example_triangulated}. The zero object is the only $\BE$-projective object, and for any object $X$ there exists an exact triangle $\susp^{-1} X \to 0 \to X \xrightarrow{\id_X} X$. Hence $\cat{T}$ has enough $\BE$-projectives and $\susp^{-1} X$ is a syzygy of $X$. Dually, any triangulated category has enough injectives and the suspension $\susp X$ is a cosyzygy of $X$. It is straightforward to see that the higher extensions of any triangulated category $\cat{T}$ are given by 
\begin{equation*}
\BE_\susp^n(Y,X) = \Hom{\cat{T}}{Y}{\susp^n X} \cong \Hom{\cat{T}}{\susp^{-n} Y}{X} = \BE_\Omega^n(Y,X)\,.
\end{equation*}
\end{example}

Next we discuss the cup product on $\BE_\Omega$. The construction for $\BE_\susp$ works analogously. Let $R$ be a commutative ring and let $(\cat{A},\BE,\fs)$ be an $R$-linear extriangulated category with enough $\BE$-projectives. For an object $W\in\cat{A}$ there exists an $\BE$-projective object $P$ and an extriangle $\Omega W \to P \to W \dashrightarrow$. For any object $Y$ this extriangle induces a surjective map of $R$-modules
\begin{equation*}
% \Hom{\cat{A}}{Z}{Y} \to \Hom{\cat{A}}{P}{Y} \to 
\Hom{\cat{A}}{\Omega W}{Y} \to \BE(W,Y) \to 0
\end{equation*}
which is natural in $Y$; see \cite[Corollary~3.12]{Nakaoka/Palu:2019}. Then the cup product for $\BE_\Omega$ is defined as
\begin{equation} \label{cup_product_cosyz}
\begin{gathered}
\BE_\Omega^i(Y,X) \otimes_R \BE_\Omega^j(Z,Y) \to \BE_\Omega^i(\Omega^j Z,X) \cong \BE_\Omega^{i+j}(Z,X)\,, \\
d\otimes e\mapsto d \smile e \coloneqq \BE_\Omega^i(f_{e},X)(d)
\end{gathered}
\end{equation}
% for $d \in \BE_{\Omega}^{i}(Y,X)$, $e \in \BE_{\Omega}^{j}(Z,Y)$ and
where $f_{e}$ is a pre-image of $e$ under the surjective map above for $W = \Omega^{j-1} Z$.

%lies  
% \begin{equation*}
% \Hom{\cat{A}}{\Omega^j Z}{Y} \to \BE(\Omega^{j-1} Z,Y) = \BE_\Omega^j(Z,Y)\,.
% \end{equation*}
By \cite[Section~3.1]{Herschend/Liu/Nakaoka:2022} the cup product is independent of the pre-image, and there are natural isomorphisms $\BE_\Omega^k(\Omega^\ell -,-) \cong \BE_\Omega^{k+\ell}(-,-)$.

%%%%%%%%%%%%%%%%%%%%%%%%%%%%%%%%%%%%%%%%%%%%%%%%%%%%%%%%%%%
\subsection{Comparison} \label{higher_ext_comparision}

When $(\cat{A},\BE,\fs)$ is essentially small and has enough $\BE$-injectives or $\BE$-projectives, then the definitions of higher extensions are naturally isomorphic by \cite[Corollary~3.21]{Gorsky/Nakaoka/Palu:2021}. 
The proof in \textit{loc.cit.}\@ involves proving that the higher extensions and cup products are universal among bifunctors satisfying similar properties; see \cite[Proposition~3.20]{Gorsky/Nakaoka/Palu:2021}. 
We give an alternative proof, as well as a proof that the cup products are compatible with these natural isomorphisms. The main argument for both statements is contained in the following technical lemma.
%, and uses that colimits, and hence coends, are right exact.

\begin{lemma} \label{higher_ext_coend_cosyz_iso}
Let $R$ be a commutative ring and $(\cat{A},\BE,\fs)$ be an essentially small $R$-linear extriangulated category with enough $\BE$-projectives. There exist $R$-module homomorphisms $\phi_{i,j}(Y,X) \colon \BE^i(\Omega^j Y,X) \to \BE^{i+j}(Y,X)$ for $i,j \geq 0$ and objects $X,Y \in \cat{A}$, such that:
\begin{enumerate}
\item \label{higher_ext_coend_cosyz_iso:surj} $\phi_{i,j}(Y,X)$ is surjective when $i = 0$ and $j > 0$;
\item \label{higher_ext_coend_cosyz_iso:iso} $\phi_{i,j}(Y,X)$ is an isomorphism when $i \geq 1$ or $j = 0$;
\item \label{higher_ext_coend_cosyz_iso:naturalX} $\phi_{i,j}(Y,X)$ is natural in $X$; 
\item \label{higher_ext_coend_cosyz_iso:naturalY} $\phi_{i,j}(Y,X)$ is natural in $Y$ in the sense that the associated diagram commutes for any lift $\Omega^j Y \to \Omega^j Y'$ of a morphism $Y \to Y'$; and
\item \label{higher_ext_coend_cosyz_iso:cup} the morphisms are compatible with the cup product \cref{cup_product_coend} in the sense that the following diagram commutes
\begin{equation*}
\begin{tikzcd}
\BE^i(Y,X) \otimes_R \BE^j(\Omega^k Z,Y) \ar[r,"\smile"] \ar[d,"{\id \otimes \phi_{j,k}(Z,Y)}" swap] & \BE^{i+j}(\Omega^k Z,X) \ar[d,"{\phi_{i+j,k}(Z,X)}"] \\
\BE^i(Y,X) \otimes_R \BE^{j+k}(Z,Y) \ar[r,"\smile"] & \BE^{i+j+k}(Z,X)
\end{tikzcd}
\end{equation*}
for all integers $i,j,k \geq 0$. 
\end{enumerate}
\end{lemma}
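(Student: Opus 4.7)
The natural approach is to define $\phi_{i,j}$ as an iterated cup product with the canonical syzygy extension. For each $Y$, the chosen syzygy extriangle $\Omega Y \to P \to Y \xdashedrightarrow{\delta_Y}$ gives a distinguished element $\delta_Y \in \BE^1(Y, \Omega Y)$. Using associativity of $\smile$ from \cite{Gorsky/Nakaoka/Palu:2021}, iterate to obtain $\delta^{(j)}_Y \coloneqq \delta_{\Omega^{j-1}Y} \smile \cdots \smile \delta_{\Omega Y} \smile \delta_Y \in \BE^j(Y, \Omega^j Y)$. Set $\phi_{i,0}(Y,X) \coloneqq \id$ and
\[
\phi_{i,j}(Y,X)(d) \coloneqq d \smile \delta^{(j)}_Y.
\]
This compresses the construction into a single recipe whose properties can all be traced back to the formal properties of $\smile$.

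For \cref{higher_ext_coend_cosyz_iso:surj} and \cref{higher_ext_coend_cosyz_iso:iso} my plan is to handle $j = 1$ first and then iterate. The extriangle $\Omega Y \to P \to Y \xdashedrightarrow{\delta_Y}$ should induce a first-variable long exact sequence
\[
\cdots \to \BE^i(P, X) \to \BE^i(\Omega Y, X) \xrightarrow{-\smile \delta_Y} \BE^{i+1}(Y, X) \to \BE^{i+1}(P, X) \to \cdots
\]
in the sense of \cite{Gorsky/Nakaoka/Palu:2021}, and since $P$ is $\BE$-projective the groups $\BE^k(P,X)$ vanish for $k \geq 1$; this yields the surjection for $i = 0$ and an isomorphism for $i \geq 1$. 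The general case $j \geq 1$ follows from the factorisation
\[
\BE^i(\Omega^j Y, X) \xrightarrow{\phi_{i,1}(\Omega^{j-1} Y, X)} \BE^{i+1}(\Omega^{j-1} Y, X) \to \cdots \to \BE^{i+j}(Y, X),
\]
in which every factor is an isomorphism when $i \geq 1$, and only the last factor fails to be an isomorphism when $i = 0$.

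The remaining properties are essentially formal. Naturality in $X$ is immediate from the bifunctoriality of $\smile$ in the leftmost object. For \cref{higher_ext_coend_cosyz_iso:naturalY}, given $h \colon Y \to Y'$ and any lift $\tilde h \colon \Omega Y \to \Omega Y'$, the commutativity of the associated morphism of syzygy extriangles gives the identity $\tilde h_* \delta_Y = h^* \delta_{Y'}$ in $\BE^1(Y, \Omega Y')$, which iterates to $(\tilde h^{(j)})_* \delta^{(j)}_Y = h^* \delta^{(j)}_{Y'}$ for any compatible iterated lift; combined with the bifunctoriality of $\smile$, this yields the required square. For \cref{higher_ext_coend_cosyz_iso:cup}, unwinding the definitions reduces the claim to the associativity identity $d \smile (e \smile \delta^{(k)}_Z) = (d \smile e) \smile \delta^{(k)}_Z$, which holds for the coend cup product.

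The main obstacle is the very first step: one needs access to the long exact sequence for the coend-defined higher extensions, with the connecting morphism realised explicitly as $-\smile \delta_Y$. If the version in \cite{Gorsky/Nakaoka/Palu:2021} is not already in precisely this form, one can derive it directly from the coend presentation of $\BE^n$ using the right exactness of coends together with an inductive unfolding on $n$, or alternatively transport it from the syzygy definition $\BE^\bullet_\Omega$ (where the sequence is essentially tautological) once enough of $\phi$ has been shown to behave well to avoid circularity.
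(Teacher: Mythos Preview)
Your construction coincides with the paper's: the map the paper obtains by applying $\int^W \BE^i(W,X)\otimes_R(-)$ to the connecting map $\Hom{\cat{A}}{\Omega Y}{W}\to\BE(Y,W)$ is precisely $d\mapsto d\smile\delta_Y$, and the inductive factorisation through $\phi_{i,1}(\Omega^{j-1}Y,X)$ is the same. The difference is in how properties \cref{higher_ext_coend_cosyz_iso:surj} and \cref{higher_ext_coend_cosyz_iso:iso} are established. You invoke a long exact sequence for the coend-defined $\BE^\bullet$ and flag its availability as the ``main obstacle''; the paper sidesteps this entirely. It uses only that $\Hom{\cat{A}}{P}{W}\to\Hom{\cat{A}}{\Omega Y}{W}\to\BE(Y,W)\to 0$ is right exact (from \cite[Corollary~3.12]{Nakaoka/Palu:2019}) and that coends, being colimits of tensor products over $R$, preserve right exactness. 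This immediately yields the three-term exact sequence $\BE^i(P,X)\to\BE^i(\Omega Y,X)\to\BE^{i+1}(Y,X)\to 0$, and then one observes directly from the coend definition that $\BE^i(P,X)=\int^W\BE^{i-1}(W,X)\otimes_R\BE(P,W)=0$ for $i\geq 1$, so $\phi_{i,1}$ is an isomorphism in that range. No long exact sequence is needed, and your obstacle dissolves.

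One small slip: in your factorisation for general $j$, when $i=0$ it is the \emph{first} factor $\phi_{0,1}(\Omega^{j-1}Y,X)$ that is merely surjective, not the last; the subsequent factors have left index $\geq 1$ and are already isomorphisms. The conclusion is unaffected.
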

\begin{proof}
We use induction on $j$. For $j=0$ we let $\phi_{i,0}(Y,X)$ be the identity morphism. For $j=1$ we have an extriangle $\Omega Y \to P \to Y \dashrightarrow$ with $P$ an $\BE$-projective object. Then there is a right exact sequence
\begin{equation*}
\Hom{\cat{A}}{P}{W} \to \Hom{\cat{A}}{\Omega Y}{W} \to \BE(Y,W) \to 0
\end{equation*}
in $\Mod{R}$ that is natural in $W \in \cat{A}$. We apply $\int^W \BE^i(W,X) \otimes_R -$ to the right exact sequence. Since coends preserve right exact sequences,  the sequence
\begin{equation*}
\BE^i(P,X) \to \BE^i(\Omega Y,X) \to \BE^{i+1}(Y,X) \to 0
% \int^W \BE(W,Y) \otimes \Hom{\cat{A}}{P_{n-1}}{W} \to \int^W \BE(W,Y) \otimes \Hom{\cat{A}}{\Omega^{n-1} X}{W} \to \int^W \BE(W,Y) \otimes \BE(\Omega^{n-2} X,W) \to 0
\end{equation*}
is exact; see \cite[Proposition~2.2.1]{Loregian:2021}. We let $\phi_{i,1}(Y,X)$ be the latter morphism. Finally, for $j > 1$ we define $\phi_{i,j}(Y,X)$ as the composition
\begin{equation*}
\BE^i(\Omega^j Y,X) \xrightarrow{\phi_{i,1}(\Omega^{j-1} Y,X)} \BE^{i+1}(\Omega^{j-1} Y,X) \xrightarrow{\phi_{i+1,j-1}(Y,X)} \BE^{i+j}(Y,X)\,.
\end{equation*}
The properties \cref{higher_ext_coend_cosyz_iso:surj,higher_ext_coend_cosyz_iso:iso,higher_ext_coend_cosyz_iso:naturalX} hold by construction. Property \cref{higher_ext_coend_cosyz_iso:naturalY} is straightforward to check from the construction, and \cref{higher_ext_coend_cosyz_iso:cup} can be shown using induction on $k$. 
\end{proof}

\begin{corollary}
    \label{cor:higher_ext_equiv}
    Let $R$ be a commutative ring and $(\cat{A},\BE,\fs)$ be an essentially small $R$-linear extriangulated category with enough $\BE$-projectives. 
    There are natural isomorphisms $\BE_{\Omega}^{i} \Longrightarrow  \BE^{i}$ for $i\geq 0$ that are compatible with the cup products  defined in \cref{cup_product_coend} and \cref{cup_product_cosyz}; that is the following diagram commutes
    \[
    \begin{tikzcd}
        \BE_{\Omega}^{i}(Y,X)\otimes \BE_{\Omega}^{j}(Z,Y)
        \arrow[d, "{\cong}"]
        \arrow[rrr, "{\smile}", "{\cref{cup_product_cosyz}}"']
        &
        &
        &
        \BE_{\Omega}^{i+j}(Z,X)\arrow[d, "{\cong}"]
        \\
        \BE^{i}(Y,X)\otimes \BE^{j}(Z,Y)
        \arrow[rrr, "{\smile}", "{\cref{cup_product_coend}}"']
        &
        &
        &
        \BE^{i+j}(Z,X) \nospacepunct{.}
    \end{tikzcd}
    \]
\end{corollary}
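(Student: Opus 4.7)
The plan is as follows. For $i = 0$, both $\BE_\Omega^0$ and $\BE^0$ equal $\Hom{\cat{A}}{-}{-}$ and the cup-product compatibility is tautological. For $i \geq 1$ I take the natural transformation $\psi^i_{Y,X} \coloneqq \phi_{1,i-1}(Y,X) \colon \BE_\Omega^i(Y,X) \to \BE^i(Y,X)$, which is an isomorphism by \cref{higher_ext_coend_cosyz_iso:iso} (since the first index is $\geq 1$), and natural in $X$ and $Y$ by \cref{higher_ext_coend_cosyz_iso:naturalX,higher_ext_coend_cosyz_iso:naturalY} respectively; note that the functoriality of $\BE_\Omega^i$ in $Y$ is precisely the one described via choices of syzygy lifts that part \cref{higher_ext_coend_cosyz_iso:naturalY} accommodates.

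For compatibility of cup products, fix $d \in \BE(\Omega^{i-1}Y,X)$ and $e \in \BE(\Omega^{j-1}Z,Y)$ with $i, j \geq 1$ (the degenerate cases are direct). Unravelling \cref{cup_product_cosyz}, I have $d \smile_\Omega e = d \smile \Omega^{i-1} f_e$ in $\BE^1(\Omega^{i+j-1}Z, X)$, where $f_e \colon \Omega^j Z \to Y$ is a preimage of $e$ and $\Omega^{i-1} f_e \in \BE^0(\Omega^{i+j-1}Z, \Omega^{i-1}Y)$. I then invoke \cref{higher_ext_coend_cosyz_iso:cup} in two different specializations. First, with indices $(1, 0, i+j-1)$ applied with $\Omega^{i-1}Y$ in place of the lemma's $Y$, I obtain $\phi_{1, i+j-1}(Z, X)(d \smile_\Omega e) = d \smile \phi_{0, i+j-1}(Z, \Omega^{i-1}Y)(\Omega^{i-1}f_e)$. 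Second, with indices $(i, 1, j-1)$, I obtain $\phi_{i+1, j-1}(Z, X)(\phi_{1,i-1}(Y,X)(d) \smile e) = \phi_{1,i-1}(Y,X)(d) \smile \phi_{1,j-1}(Z,Y)(e)$. The target compatibility thus reduces to identifying the left-hand sides of these two equations inside $\BE^{i+j}(Z,X)$.

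To close the gap I unpack the construction of $\phi_{a,1}$ from the proof of \cref{higher_ext_coend_cosyz_iso}: the connecting morphism $\Hom{\cat{A}}{\Omega Y}{W} \to \BE(Y,W)$ sends $f \mapsto f \smile \delta_Y$ for $\delta_Y \in \BE(Y, \Omega Y)$ the class of the chosen syzygy extriangle, so by the universal property of the coend $\phi_{a,1}(Y,X)(d) = d \smile \delta_Y$; iterating gives $\phi_{1,n-1}(Y,X)(d) = d \smile \delta_{\Omega^{n-2}Y} \smile \cdots \smile \delta_Y$ and analogously $\phi_{0,n}(Z,Y)(f) = f \smile \delta_{\Omega^{n-1}Z} \smile \cdots \smile \delta_Z$. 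Combined with associativity of $\smile$ on $\BE^*$ (cf.\ \cite{Gorsky/Nakaoka/Palu:2021}) and the naturality $\delta_B \smile f = \Omega f \smile \delta_A$ of the syzygy class (which follows from functoriality of $\BE$ applied to the extriangle square lifting any $f \colon A \to B$ to $\Omega f \colon \Omega A \to \Omega B$), I iteratively replace $\delta_{\Omega^\ell Y} \smile \Omega^\ell f_e$ by $\Omega^{\ell+1} f_e \smile \delta_{\Omega^{j+\ell}Z}$, and both sides reduce to the common expression $d \smile \Omega^{i-1}f_e \smile \delta_{\Omega^{i+j-2}Z} \smile \cdots \smile \delta_Z$.

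The main obstacle I expect is precisely this last bookkeeping: one must move $f_e$ through $i-1$ cascading $\delta$'s while maintaining compatibility between the various choices of lifts $\Omega^{\ell} f_e$. The associativity of $\smile$ and the naturality of $\delta$ are standard facts about extriangulated categories with enough projectives but are not spelled out in the excerpt, so some care (or an explicit citation) is required to invoke them cleanly.
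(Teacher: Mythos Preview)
Your argument is correct, but it takes a more explicit and computational route than the paper. The paper argues by induction on $j$: the base case $j=0$ is handled directly using property~\cref{higher_ext_coend_cosyz_iso:naturalY}, and the inductive step is a three-square diagram chase. The upper square uses the definitional identification $\BE_\Omega^{j+1}(Z,-) = \BE_\Omega^{j}(\Omega Z,-)$ and the compatibility of the syzygy cup product with it; the middle square is the inductive hypothesis with $Z$ replaced by $\Omega Z$; the lower square is property~\cref{higher_ext_coend_cosyz_iso:cup} with indices $(i,j,1)$. The outer rectangle is then exactly the desired compatibility for $j+1$. In particular, the paper never opens the black box of how the $\phi_{a,b}$ are constructed: it uses only the abstract properties listed in \cref{higher_ext_coend_cosyz_iso}.

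By contrast, you apply property~\cref{higher_ext_coend_cosyz_iso:cup} twice with the index choices $(1,0,i+j-1)$ and $(i,1,j-1)$, and then close the remaining gap by unpacking $\phi_{a,b}$ as iterated cup products with syzygy classes $\delta_{\Omega^\ell(-)}$ and invoking associativity of $\smile$ together with the naturality relation $\delta_B \smile f = (\Omega f) \smile \delta_A$. This works, but the cost is exactly what you flag in your last paragraph: you must reach back into the proof of the lemma rather than its statement, and you need the auxiliary facts about associativity and naturality of $\delta$ that the inductive argument bypasses entirely. The paper's route is therefore more modular; yours is more concrete and makes the common value $d \smile \Omega^{i-1}f_e \smile \delta_{\Omega^{i+j-2}Z} \smile \cdots \smile \delta_Z$ visible.
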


\begin{proof}
    For $i=0$ take the identity on $\Hom{\cat{A}}{Y}{X}$. 
    For $i>0$ we use the maps $\varphi_{1,i-1}(Y,X)\colon \BE_{\Omega}^{i}(Y,X)=\BE(\Omega^{i-1} Y,X)\to  \BE^{i}(Y,X)$ from \Cref{higher_ext_coend_cosyz_iso} 
    which are isomorphisms by \cref{higher_ext_coend_cosyz_iso:iso}. 
    We prove the square commutes by induction on $j$. 

    Let $d\in \BE_{\Omega}^{i}(Y,X)$ and $e\in \BE_{\Omega}^{j}(Z,Y)$. 
    For the case $j=0$ we obtain
    \begin{equation*}
    \begin{aligned}
\phi_{1,i-1}(Z,X)(d \smile e) &= \phi_{1,i-1}(Z,X)(\BE_\Omega^i(e,X)(d)) \\
&= \phi_{1,i-1}(Z,X)(\BE(\Omega^{i-1} e,X)(d)) \\
&= \BE^i(e,X)(\phi_{1,i-1}(Y,X)(d)) \\
&= \phi_{1,i-1}(Y,X)(d) \smile e\,;
    \end{aligned}
    \end{equation*}
    using property \cref{higher_ext_coend_cosyz_iso:naturalY} for the third identification. Note that $\Omega^{i-1} e$ is not a well-defined morphism, however the above identifications hold for any choice.

    We assume that the desired square commutes for some $j \geq 0$. We now claim that the following diagram commutes:
\[
\begin{tikzcd}
    \BE_{\Omega}^{i}(Y,X)\otimes \BE_{\Omega}^{j+1}(Z,Y)
        \arrow[d]
        \arrow[rr, "{\smile}", "{\cref{cup_product_cosyz}}"']
        &
        &
        \BE_{\Omega}^{i+j+1}(Z,X)\arrow[d]
    \\
    \BE_{\Omega}^{i}(Y,X)\otimes \BE_{\Omega}^{j}(\Omega Z,Y)
        \arrow[d,"{\phi_{1,i-1}(Y,X) \otimes \phi_{1,j-1}(\Omega Z,Y)}", swap]
        \arrow[rr, "{\smile}", "{\cref{cup_product_cosyz}}"']
        &&
        \BE_{\Omega}^{i+j}(\Omega Z,X)\arrow[d, "{\phi_{1,i+j-1}(\Omega Z,X)}"]
    \\
            \BE^{i}(Y,X) \otimes_R \BE^{j}(\Omega Z,Y) \ar[rr,"\smile", "{\cref{cup_product_coend}}"'] \ar[d,"{\id_{\BE^{i}(Y,X)} \otimes \phi_{j,1}(Z,Y)}", swap]
    &&
    \BE^{i+j}(\Omega Z,X) \ar[d,"{\phi_{i+j,1}(Z,X)}"]
    \\
    \BE^{i}(Y,X) \otimes_R \BE^{j+1}(Z,Y) \ar[rr,"\smile", "{\cref{cup_product_coend}}"'] 
    &&
    \BE^{i+j+1}(Z,X) \nospacepunct{.}
\end{tikzcd}
\]
The upper square commutes by the definition of $\BE_\Omega$ and its cup product. The middle square commutes by replacing $Z$ with $\Omega Z$ in the inductive hypothesis. 
The lower square commutes by \cref{higher_ext_coend_cosyz_iso:cup}. 
It remains to observe that the outside square is precisely the claim. 
\end{proof}

\begin{remark} \label{higher_ext_ring_bimod}
Let $(\cat{A},\BE,\fs)$ be an extriangulated category. Assume that higher extensions exist. Then $\BE^*(X,X) = \{\BE^n(X,X)\}_{n \geqslant 0}$ together with the cup product is a graded ring for any $X \in \cat{A}$. Moreover, for any $X,Y \in \cat{A}$ the cup product induces a $\BE^*(X,X)$-$\BE^*(Y,Y)$-bimodule structure on $\BE^*(Y,X) = \{\BE^n(Y,X)\}_{n \geqslant 0}$. 

Note that there is a category where the objects are the objects of $\cat{A}$, where $\BE^*(X,Y)$ is the set of morphisms $X \to Y$,  and where the cup product defines composition. This is a special case of the \emph{additive tensor category} in \cite{simson-cats-of-reps-of-species}.
\end{remark}

\begin{remark} \label{koszul_sign}
In connection with the cup product it is common to introduce a Koszul sign convention; see \cite[Theorem III.9.1]{MacLane:1995}. This means, for $d \in \BE^i(Y,X)$ we define morphisms
\begin{equation*}
\begin{aligned}
d_* &\colon \BE^j(W,Y) \to \BE^{i+j}(W,X) \,,&\quad e &\mapsto d \smile e \quad \text{and}\\
d^* &\colon \BE^j(X,Z) \to \BE^{i+j}(Y,Z) \,,&\quad c &\mapsto (-1)^{ij} c \smile d
\end{aligned}
\end{equation*}
that are natural in $W$ and $Z$, respectively. For $i=0$ one has $d_* = \BE^j(W,d)$ and $d^* = \BE^j(d,Z)$. Further, the natural transformations for $i=1$ and $j=0$ extend those in \cite[Definition~3.1]{Nakaoka/Palu:2019}. The Koszul sign convention does not appear in any of the previous works on the cup product of an extriangulated category. 

For a triangulated category $\cat{T}$ the sign seems artificial, but when $\cat{T} = \dcat{\cat{E}}$ is the derived category of an exact category it is standard: An element in $\BE^n(Y,X)$ is considered a degree $n$ morphism $Y \to X$. 
\end{remark}

%%%%%%%%%%%%%%%%%%%%%%%%%%%%%%%%%%%%%%%%%%%%%%%%%%%%%%%%%%%
%%%%%%%%%%%%%%%%%%%%%%%%%%%%%%%%%%%%%%%%%%%%%%%%%%%%%%%%%%%
\section{Biextriangulated functors}
\label{sec-biextria}

We now discuss functors of the form $\cat{A} \times \cat{B} \to \cat{C}$ of extriangulated categories. The notion of a biextriangulated functor should generalise the notions for exact and triangulated categories; see for example \cite[II.7.4]{Weibel:2013} and \cite[Definition~10.3.6]{Kashiwara/Schapira:2006}, respectively. 

%%%%%%%%%%%%%%%%%%%%%%%%%%%%%%%%%%%%%%%%%%%%%%%%%%%%%%%%%%%
\subsection{Extriangulated functors}

An \emph{extriangulated functor} 
\begin{equation*}
\sF = (\sF,\beta) \colon (\cat{A},\BE,\fs) \to (\cat{B},\BF,\ft)
\end{equation*}
of extriangulated categories consists of an additive functor $\sF \colon \cat{A} \to \cat{B}$ and a natural transformation $\beta \colon \BE(-,-) \Longrightarrow \BF(\sF(-),\sF(-))$ such that for any extriangle $X \xrightarrow{f} W \xrightarrow{g} Y \xdashedrightarrow{d}$ in $\cat{A}$ there is an extriangle
\begin{equation*}
\sF(X) \xrightarrow{\sF(f)} \sF(W) \xrightarrow{\sF(g)} \sF(Y) \xdashedrightarrow{\nat{\beta}{Y,X}(d)}
\end{equation*}
in $\cat{B}$; see \cite[Definition~2.32]{BennettTennenhaus/Shah:2021} and also \cite[Definition~2.11]{Nakaoka/Ogawa/Sakai:2022}.

The composition of extriangulated functors $(\sF,\beta) \colon (\cat{A},\BE,\fs) \to (\cat{B},\BF,\ft)$ and  $(\sG,\gamma) \colon (\cat{B},\BF,\ft) \to (\cat{C},\BG,\fu)$ is defined by $(\sG\sF,\alpha)$ where $\nat{\alpha}{Y,X}= \nat{\gamma}{\sF Y,\sF X} \nat{\beta}{YX}$ for all $X,Y\in \cat{A}$; see \cite[Definition~3.18(ii)]{BenTenHaugSandShah}.

Let $(\sF,\beta), (\sG,\gamma) \colon (\cat{A},\BE,\fs) \to (\cat{B},\BF,\ft)$ be extriangulated functors. An \emph{extriangulated natural transformation} $\eta \colon (\sF,\beta) \Longrightarrow (\sG,\gamma)$ is a natural transformation $\eta \colon \sF \Longrightarrow \sG$ of additive functors such that, for all $X,Y \in \cat{A}$, the diagram
\begin{equation*}
\begin{tikzcd}[column sep=large]
\BE(Y,X) \ar[r,"{\nat{\beta}{Y,X}}"] 
\ar[d,"{\nat{\gamma}{Y,X}}" swap] & \BF(\sF(Y),\sF(X)) \ar[d,"{(\nat{\eta}{X})_*}"] \\
\BF(\sG(Y),\sG(X)) \ar[r,"{(\nat{\eta}{Y})^*}" swap] & \BF(\sF(Y),\sG(X))
\end{tikzcd}
\end{equation*}
commutes; see \cite[Definition~2.11]{Nakaoka/Ogawa/Sakai:2022} and also \cite[Definition 4.1]{BenTenHaugSandShah}. 

\begin{remark} \label{nat_trans_cup_product}
The definition of an extriangulated natural transformation can be phrased using the cup product. Explicitly, a natural transformation $\eta \colon (\sF,\beta) \Longrightarrow (\sG,\gamma)$ is extriangulated if and only if
\begin{equation*}
\eta_X \smile \nat{\beta}{Y,X}(d) = \nat{\gamma}{Y,X}(d) \smile \eta_Y
\end{equation*}
for any $d \in \BE(Y,X)$. This means the natural transformation is compatible with the connecting map from $\operatorname{Hom}$ to $\BE$. 
\end{remark}

Specialising to exact or triangulated categories, these notions recover the classical definitions: 

\begin{example} \label{example_exact_functor}
Let $\sF \colon \cat{E} \to \cat{F}$ be an exact functor of exact categories; this means any exact sequence is mapped to an exact sequence. The functor $\sF$ induces a map $\nat{\beta}{Y,X}\colon \Ext[1]{\cat{E}}{Y}{X} \to \Ext[1]{\cat{F}}{\sF(Y)}{\sF(X)}$, which is natural in $X$ and $Y$ since $\sF$ preserves pushout and pullback squares; for the latter see \cite[Proposition~5.2]{Buehler:2010}. If we view $\cat{E}$ and $\cat{F}$ as extriangulated categories as in \cref{example_exact}, then $\sF$, together with this natural transformation $\beta$, is an extriangulated functor.

Conversely, it is clear that any extriangulated functor between exact categories is exact; see \cite[Theorem~2.34]{BennettTennenhaus/Shah:2021}.

Lastly, we note that any natural transformation of exact functors is automatically extriangulated \cite[p.~349]{Nakaoka/Ogawa/Sakai:2022}; see also \cite[Example 5.4]{BenTenHaugSandShah} for details.
\end{example}

\begin{example} \label{example_triangulated_functor}
Let $(\sF,\tau) \colon \cat{S} \to \cat{T}$ be a triangulated functor of triangulated categories; this means $\tau \colon \sF \susp \to\susp \sF$ is a natural transformation and, for any exact triangle $X \xrightarrow{f} X \xrightarrow{g} Y \xrightarrow{h} \susp X$ in $\cat{S}$, there is an exact triangle 
\begin{equation*}
\sF(X) \xrightarrow{\sF(f)} \sF(W) \xrightarrow{\sF(g)} \sF(Y) \xrightarrow{\nat{\tau}{X} \sF(h)} \susp \sF(X)
\end{equation*}
in $\cat{T}$. Then $(\sF,\beta) \colon \cat{S} \to \cat{T}$ is an extriangulated functor of triangulated categories with $\nat{\beta}{Y,X}$ given by the composition
\begin{equation*}
\Hom{\cat{S}}{Y}{\susp X} \xrightarrow{\sF} \Hom{\cat{T}}{\sF(Y)}{\sF(\susp X)} \xrightarrow{\Hom{\cat{T}}{\sF(Y)}{\nat{\tau}{X}}} \Hom{\cat{T}}{\sF(Y)}{\susp \sF(X)}
\end{equation*}
for all $X,Y \in \cat{S}$; see \cite[Theorem~2.33]{BennettTennenhaus/Shah:2021}.

Conversely, let $(\sF,\beta) \colon \cat{S} \to \cat{T}$ be an extriangulated functor between triangulated categories. We set 
\begin{equation*}
\nat{\tau}{X} \coloneqq \nat{\beta}{\susp X,X}(\id_{\susp X}) \colon \sF(\susp X) \to \susp \sF(X)\,.
\end{equation*}
It is straightforward to check that this map is natural in $X$. It is an isomorphism, since it witnesses the mapping of the (ex)triangles
\begin{equation*}
(X \to 0 \to \susp X \xdashedrightarrow{\id_{\susp X}} \susp X) \quad \mapsto \quad (\sF(X) \to 0 \to \sF(\susp X) \xdashedrightarrow{\nat{\tau}{X}} \susp\sF(X))\,.
\end{equation*}
Hence $(\sF,\tau)$ is a triangulated functor. 

Unlike the exact case, a natural transformation of triangulated functors is not necessarily extriangulated. It is extriangulated if and only if it is a morphism of triangulated functors in the sense of \cite[Definition 10.1.9(ii)]{Kashiwara/Schapira:2006}; see \cite[p.~349]{Nakaoka/Ogawa/Sakai:2022} and also \cite[Example 5.3]{BenTenHaugSandShah}.
\end{example}

The next \namecref{lem-prelim-for-extr-functor-higher-ext} is a generalisation of the fact that the higher extensions, defined using syzygies, are functorial from \cite[Proposition~3.4]{Herschend/Liu/Nakaoka:2022}.

\begin{lemma} \label{lem-prelim-for-extr-functor-higher-ext}
Let $(\sF,\beta) \colon (\cat{A},\BE,\fs) \to (\cat{B},\BF,\ft)$ be an extriangulated functor of extriangulated categories. For any morphism $f \colon Y \to Y'$ there exists a morphism $f_\omega \colon \Omega \sF(Y) \to \sF(\Omega Y')$ such that there is a commutative diagram
\begin{equation*}
\begin{tikzcd}
\Omega \sF(Y) \ar[r] \ar[d,"f_\omega"] & Q \ar[r] \ar[d] & \sF(Y)\ar[d,"f"] \ar[r,dashed] & ~ \\
\sF(\Omega Y') \ar[r] & \sF(P) \ar[r] & \sF(Y') \ar[r,dashed] & ~
\end{tikzcd}
\end{equation*}
where each row is an extriangle in $\cat{B}$ and $P$ and $Q$ are $\BE$-projective and $\BF$-projective, respectively. Moreover, the morphism
\begin{equation*}
\BF((\id_Y)_\omega,\id_W) \colon \BF(\sF(\Omega Y),W) \to \BF(\Omega \sF(Y),W)
\end{equation*}
is natural in $W \in \cat{B}$ and $Y \in \cat{A}$. 
\end{lemma}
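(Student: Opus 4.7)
The plan is to construct $f_\omega$ by a standard lifting argument using the axiom dual to (ET3), and then to deduce naturality by recognising that two candidate composites in the naturality square both qualify as lifts produced by the first part of the lemma, so must induce the same map on $\BF(-,W)$.

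For the construction, I would start from the chosen syzygy extriangle $\Omega Y' \to P \to Y' \xdashedrightarrow{d}$ in $\cat{A}$ with $P$ an $\BE$-projective, and apply the extriangulated functor $(\sF,\beta)$ to obtain the extriangle $\sF(\Omega Y') \to \sF(P) \to \sF(Y') \xdashedrightarrow{\nat{\beta}{Y',\Omega Y'}(d)}$ in $\cat{B}$; this is the bottom row of the target diagram. The top row is the chosen syzygy extriangle $\Omega \sF(Y) \to Q \to \sF(Y) \dashrightarrow$ in $\cat{B}$ with $Q$ an $\BF$-projective. Since $\BF(Q,\sF(\Omega Y'))=0$, the long exact sequence attached to the bottom row lifts the composite $Q \to \sF(Y) \xrightarrow{\sF(f)} \sF(Y')$ to a morphism $Q \to \sF(P)$; this serves as the middle vertical and makes the right-hand square commute. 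The axiom (ET3)$^{\text{op}}$ then produces the missing left vertical $f_\omega \colon \Omega \sF(Y) \to \sF(\Omega Y')$ together with the commutativity of the remaining square.

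Naturality in $W$ is immediate from the bifunctoriality of $\BF$. For naturality in $Y$, given $g \colon Y \to Y'$ along with any lifts $\Omega g \colon \Omega Y \to \Omega Y'$ in $\cat{A}$ and $\Omega \sF(g) \colon \Omega \sF(Y) \to \Omega \sF(Y')$ in $\cat{B}$ through the respective syzygy extriangles, a direct diagram check shows that both $\sF(\Omega g) \circ (\id_Y)_\omega$ and $(\id_{Y'})_\omega \circ \Omega \sF(g)$ fit into the commutative data required by the first part of the lemma applied to $g$, so each is a valid choice of $g_\omega$. The problem therefore reduces to showing that any two valid choices of $g_\omega$ induce the same map on $\BF(-,W)$. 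This is the main obstacle and mirrors the well-definedness argument in \cite[Proposition~3.4]{Herschend/Liu/Nakaoka:2022}: I would argue that the difference of two such choices, paired with the difference of the corresponding middle verticals and the zero morphism on the right, forms a morphism of extriangles whose right column vanishes, and then use axiom (ET4)$^{\text{op}}$ to conclude that this difference factors through the $\BF$-projective $Q$. Since $\BF(Q,-)=0$, such a morphism is annihilated by $\BF(-,W)$, which delivers the claimed naturality.
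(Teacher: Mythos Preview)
Your proposal is correct and follows essentially the same route as the paper: construct $f_\omega$ by lifting through the $\BF$-projective $Q$ and applying (ET3)$^{\text{op}}$, then derive naturality in $Y$ by recognising both composites as valid choices of $g_\omega$ and invoking well-definedness of the induced map on $\BF(-,W)$. The paper in fact states the well-definedness step without justification, so your proof is more detailed here.

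One minor correction: the factorisation of the difference through $Q$ does not come from (ET4)$^{\text{op}}$ (the octahedral-type axiom) but from the long exact sequence of \cite[Corollary~3.12]{Nakaoka/Palu:2019} applied to the top extriangle. Concretely, since both choices give $(f_\omega)_* e = \sF(g)^* e' = (f'_\omega)_* e$, one has $(f_\omega - f'_\omega)_* e = 0$, and exactness at $\Hom{\cat{B}}{\Omega\sF(Y)}{\sF(\Omega Y')}$ forces $f_\omega - f'_\omega$ to factor through the inflation $\Omega\sF(Y) \to Q$; then $\BF(f_\omega - f'_\omega, W)$ factors through $\BF(Q,W)=0$ as you say.
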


\begin{proof}
Let $\Omega Y' \to P \to Y' \dashrightarrow$ be an extriangle with $P$ an $\BE$-projective object. Further, let $\Omega \sF(Y) \to Q \to \sF(Y) \dashrightarrow$ be an extriangle with $Q$ a $\BF$-projective object. As $\BF(Q,-) = 0$, applying $\Hom{}{Q}{\sF(-)}$ to the first extriangle yields a short exact sequence by \cite[Corollary~3.12]{Nakaoka/Palu:2019}. That is we obtain a commutative diagram
\begin{equation*}
\begin{tikzcd}
\Omega \sF(Y) \ar[r] & Q \ar[r] \ar[d] & \sF(Y)\ar[d,"f"] \ar[r,dashed] & ~ \\
\sF(\Omega Y') \ar[r] & \sF(P) \ar[r] & \sF(Y') \ar[r,dashed] & ~ \nospacepunct{.}
\end{tikzcd}
\end{equation*}
By the axiom (ET3)$^{\text{op}}$ for an extriangulated category in \cite[Definition~2.12]{Nakaoka/Palu:2019}, there exists a morphism $f_\omega \colon \Omega \sF(Y) \to \sF(\Omega Y)$ such that the diagram commutes. While this morphism need not be unique, the induced map
\begin{equation*}
\BF(f_\omega,\id_W) \colon \BF(\sF(\Omega Y),W) \to \BF(\Omega \sF(Y'),W)
\end{equation*}
is independent of the choice of $f_\omega$. In particular, the two compositions
\begin{equation*}
\Omega \sF(Y) \to \Omega \sF(Y') \to \sF(\Omega Y') \quad \text{and} \quad \Omega \sF(Y) \to \sF(\Omega Y) \to \sF(\Omega Y')
\end{equation*}
need not coincide, however after applying $\BF(-,W)$ they will be the same. That means there is a commutative diagram
\begin{equation*}
\begin{tikzcd}[column sep=huge]
\BF(\Omega \sF(Y),W) \ar[r,"{\BF((\id_Y)_\omega,\id_W)}"] \ar[d,"{\BF(\Omega \sF(f),\id_W)}"] & \BF(\sF(\Omega Y),W) \ar[d,"{\BF(\sF \Omega f,\id_W)}"] \\
\BF(\Omega \sF(Y'),W) \ar[r,"{\BF((\id_{Y'})_\omega,\id_W)}"] & \BF(\sF(\Omega Y'),W)
\end{tikzcd}
\end{equation*}
where the vertical morphisms are well-defined by \cite[Proposition~3.4]{Herschend/Liu/Nakaoka:2022}.
\end{proof}

We record the interplay of extriangulated functors and the higher extensions: 

\begin{proposition} \label{extr_functor_higher_ext}
Let $(\sF,\beta) \colon (\cat{A},\BE,\fs) \to (\cat{B},\BF,\ft)$ be an extriangulated functor of extriangulated categories. Assume higher extensions are well-defined in $(\cat{A},\BE,\fs)$ and $(\cat{B},\BF,\ft)$ through the same construction, i.e. by means of:
\begin{enumerate}
    \item coends following \cref{defn_higher_ext_coend}, assuming $\cat{A}$ and $\cat{B}$ are essentially small; 
    \item syzygies following \cref{higher_ext_syz}, assuming $\cat{A}$ and $\cat{B}$ have enough projectives; or
    \item cosyzygies following \cref{higher_ext_cosyz}, assuming $\cat{A}$ and $\cat{B}$ have enough injectives. 
\end{enumerate}
Then for each $n\geq 0$ there are induced natural transformations
\begin{equation*}
\beta^n \colon \BE^n(-,-) \Longrightarrow \BF^n(\sF(-),\sF(-))
\end{equation*}
such that $\nat{\beta^i}{Y,X}(d) \smile \nat{\beta^j}{Z,Y}(e) = \nat{\beta^{i+j}}{Z,X}(d \smile e)$ for $d \in \BE^i(Y,X)$ and $e \in \BE^j(Z,Y)$. 
\end{proposition}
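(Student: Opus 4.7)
The plan is to handle the three constructions of higher extensions separately, since the statement does not assume the underlying category satisfies more than one of the three sets of hypotheses.

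For the coend construction, proceed by induction on $n$ with $\beta^0 = \sF$ (acting on Hom-sets) and $\beta^1 = \beta$. Given $\beta^{n-1}$, the pointwise tensor $\nat{\beta^{n-1}}{W,X} \otimes_R \nat{\beta}{Y,W}$ composed with the canonical map into the coend defining $\BF^n(\sF Y, \sF X)$ yields a family
\begin{equation*}
\BE^{n-1}(W,X) \otimes_R \BE(Y,W) \longrightarrow \BF^n(\sF Y, \sF X)
\end{equation*}
that is dinatural in $W \in \cat{A}$, since the target is constructed as a dinatural colimit in $V \in \cat{B}$. The universal property of the coend $\BE^n(Y,X) = \int^W \BE^{n-1}(W,X) \otimes_R \BE(Y,W)$ then produces $\nat{\beta^n}{Y,X}$, with naturality in $X,Y$ inherited from the inductive construction. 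Cup product compatibility follows by unwinding \cref{cup-prod-coend-prelims,cup_product_coend}: both sides of the desired identity are the unique map induced from the composite dinatural family on $\BF^i(\sF W, \sF X) \otimes_R \BF^j(\sF Z, \sF W)$, and agreement is forced by uniqueness.

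For the syzygy construction, iteratively apply \cref{lem-prelim-for-extr-functor-higher-ext} to build (non-canonical) morphisms $\omega_Y^k \colon \Omega^k \sF Y \to \sF \Omega^k Y$ in $\cat{B}$; although these are not uniquely determined, the induced maps $\BF(\omega_Y^k,-)$ are well-defined and natural in $Y$ by \cref{lem-prelim-for-extr-functor-higher-ext}. Define $\nat{\beta^n}{Y,X}$ as the composition
\begin{equation*}
\BE(\Omega^{n-1} Y, X) \xrightarrow{\nat{\beta}{\Omega^{n-1} Y, X}} \BF(\sF \Omega^{n-1} Y, \sF X) \xrightarrow{\BF(\omega_Y^{n-1}, \sF X)} \BF(\Omega^{n-1} \sF Y, \sF X),
\end{equation*}
and read off naturality from that of $\beta$ and of $\BF(\omega_Y^{n-1},-)$. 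For cup product compatibility, given $d \in \BE_\Omega^i(Y,X)$ and $e \in \BE_\Omega^j(Z,Y)$, choose a pre-image $f_e \colon \Omega^j Z \to Y$ of $e$ under the connecting surjection $\Hom{\cat{A}}{\Omega^j Z}{Y} \to \BE(\Omega^{j-1} Z, Y)$ as in \cref{cup_product_cosyz}. The key observation is that $\sF(f_e) \circ \omega_Z^j \colon \Omega^j \sF Z \to \sF Y$ is a pre-image of $\nat{\beta^j}{Z,Y}(e)$ under the analogous surjection in $\cat{B}$; this holds because $\beta$ is compatible with the connecting map built into the extriangulated structure (\cref{nat_trans_cup_product}). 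Expanding both $\nat{\beta^{i+j}}{Z,X}(d \smile e)$ and $\nat{\beta^i}{Y,X}(d) \smile \nat{\beta^j}{Z,Y}(e)$ using this choice of lift reduces the claim to naturality of $\beta$ and \cref{lem-prelim-for-extr-functor-higher-ext}. The cosyzygy construction is treated dually, precomposing $\beta_{Y,\susp^{n-1} X}$ with a morphism $\sF \susp^{n-1} X \to \susp^{n-1} \sF X$ constructed by iterating the dual of \cref{lem-prelim-for-extr-functor-higher-ext}.

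The main obstacle lies in the syzygy and cosyzygy cases: the comparison morphisms $\omega_Y^k$ are non-canonical and well-defined only after applying $\BF(-,-)$, so care is required in verifying that both the definition of $\beta^n$ and the lift $\sF(f_e) \circ \omega_Z^j$ behave functorially and interact correctly with the cup product. \cref{lem-prelim-for-extr-functor-higher-ext} supplies precisely the well-definedness needed at each step.
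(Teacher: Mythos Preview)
Your proposal is correct and follows essentially the same approach as the paper: the coend case is handled by the same induction using the universal property, and the syzygy case is built from iterated applications of \cref{lem-prelim-for-extr-functor-higher-ext} in the same way, with cup product compatibility checked via the interaction of $\beta$ with the connecting surjection $\Hom{\cat{A}}{\Omega^n Y}{X} \twoheadrightarrow \BE(\Omega^{n-1}Y,X)$. Your treatment of the cup product in the syzygy case is in fact more explicit than the paper's, which only records the relevant commutative diagram and declares the remainder straightforward.
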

\begin{proof}
(1) 
%We first assume that the higher extensions in $(\cat{A},\BE,\fs)$ and $(\cat{B},\BF,\ft)$ are defined via coends as described in \cref{higher_ext_coend}. 
Defining higher extensions via coends, we construct the natural transformation $\beta^n$ by induction on $n$. For $n = 0$ it is the map given by  $\sF$ and for $n = 1$ we have $\beta^1 = \beta$. For $n \geq 2$ and any $X,Y,W \in \cat{A}$ we obtain the composition
\begin{equation*}
% \BE^i(W,X) \otimes_R \BE^j(Y,W) \xrightarrow{\phi^i(W,X) \otimes \phi^j(Y,W)} \BF^i(\sF(W),\sF(X)) \otimes_R \BF^j(\sF(Y),\sF(W)) \to \BF^n(\sF(Y),\sF(X))\,.
% \BE^{n-1}(W,X) \otimes_R \BE(Y,W) \to \BF^{n-1}(\sF(W),\sF(X)) \otimes_R \BF(\sF(Y),\sF(W)) \to \BF^n(\sF(Y),\sF(X))\,.
\begin{tikzcd}[column sep=-2em]
\BE^{n-1}(W,X) \otimes_R \BE(Y,W) \ar[dr,"{\nat{\beta^{n-1}}{W,X} \otimes \nat{\beta}{Y,W}}" swap] \ar[rr,dashed] && \BF^n(\sF(Y),\sF(X)) \nospacepunct{.} \\
& \BF^{n-1}(\sF(W),\sF(X)) \otimes_R \BF(\sF(Y),\sF(W)) \ar[ur,"\smile" swap]
\end{tikzcd}
\end{equation*}
The colimit property yields a unique morphism $\nat{\beta^n}{Y,X} \colon \BE^n(Y,X) \to \BF^n(\sF(Y),\sF(X))$, and it is straightforward to check that this map is natural in $X$ and $Y$. By construction the cup product is compatible with these natural transformations. 

(2, 3) 
We define higher extensions via syzygies. The argument for cosyzygies is analogous. Using \cref{lem-prelim-for-extr-functor-higher-ext} $(n-1)$-times we define
\begin{equation*}
\begin{tikzcd}[row sep=small,column sep=huge]
\BE(\Omega^{n-1} Y,X) \ar[r,"{\nat{\beta}{\Omega^{n-1} Y,X}}"] \ar[d,phantom,"\rotatebox{90}{=}"] & \BF(\sF(\Omega^{n-1} Y),\sF(X)) \ar[r] & \BF(\Omega^{n-1} \sF(Y),\sF(X)) \ar[d,phantom,"\rotatebox{90}{=}"] \\
\BE_\Omega^n(Y,X) \ar[rr,dashed,"{\nat{\beta^n}{Y,X}}"] && \BF_\Omega^n(\sF(Y),\sF(X))
\end{tikzcd}
\end{equation*}
for any $n \geq 2$. By \cref{lem-prelim-for-extr-functor-higher-ext} this morphism is natural in $X$ an $Y$.
It remains to check the compatibility of these natural transformations with the cup product. 
There is a canonical and commutative diagram of the form 
\begin{equation*}
\begin{tikzcd}
\Hom{\cat{A}}{\Omega^n Y}{X} \ar[r] \ar[d] & \BE(\Omega^{n-1} Y,X) \ar[d] &[-3em] = &[-3em] \BE_\Omega^n(Y,X) \ar[dd, "{\beta_{Y,X}^{n}}"] \\
\Hom{\cat{B}}{\sF(\Omega^n Y)}{\sF(X)} \ar[r] \ar[d] & \BF(\sF(\Omega^{n-1} Y),\sF(X)) \ar[d] \\
\Hom{\cat{B}}{\Omega^n \sF(Y)}{\sF(X)} \ar[r] & \BF(\Omega^{n-1} \sF(Y),\sF(X)) & = & \BF_\Omega^n(\sF(Y),\sF(X)) \nospacepunct{.}
\end{tikzcd}
\end{equation*}
Hence from here it is straightforward to see that the natural transformation is compatible with the cup product.
\end{proof}

%%%%%%%%%%%%%%%%%%%%%%%%%%%%%%%%%%%%%%%%%%%%%%%%%%%%%%%%%%%
\subsection{Extriangulated adjunction}

Let 
\begin{equation*}
(\sF,\beta) \colon (\cat{A},\BE,\fs) \to (\cat{B},\BF,\ft) \quad \text{and} \quad (\sG,\gamma) \colon (\cat{B},\BF,\ft) \to (\cat{A},\BE,\fs)
\end{equation*}
be extriangulated functors. Following \cite[Definition~4.9(i)]{BenTenHaugSandShah}, we say $(\sG,\gamma)$ is an \emph{extriangulated right adjoint} of $(\sF,\beta)$ and $(\sG,\gamma)$ is an \emph{extriangulated left adjoint}, if $(\sF,\sG)$ is a pair of adjoint functors and the unit and counit witnessing the adjunction are extriangulated natural transformations. In this situation we say $((\sF,\beta),(\sG,\gamma))$ is a \emph{pair of extriangulated adjoint functors}.

Requiring that the unit and counit are extriangulated is superfluous for triangulated category. However, for exact categories it is necessary: For example, if $R$ is a commutative ring and $X$ is a flat non-projective $R$-module, then in the adjunction $-\otimes_{R} X \dashv \Hom{R}{X}{-}$ the functor $-\otimes_{R}$ is exact while $\Hom{R}{X}{-}$ is not. 

A consequence of this definition is that we obtain an adjunction relation on the higher extensions. 

\begin{lemma}
\label{lemma-extriangulated-adjoints}
Let $(\sF,\beta) \colon (\cat{A},\BE,\fs) \to (\cat{B},\BF,\ft)$ be an extriangulated left adjoint of $(\sG,\gamma) \colon (\cat{B},\BF,\ft) \to (\cat{A},\BE,\fs)$ and $\eta$ and $\epsilon$ the associated unit and counit, respectively. Then the maps
\begin{equation*}
\begin{gathered}
\BF(\sF(Y),X) \xrightarrow{\nat{\gamma}{\sF(Y),X}} \BE(\sG(\sF(Y)),\sG(X)) \xrightarrow{(\nat{\eta}{Y})^*} \BE(Y,\sG(X)) \quad \text{and} \\
\BE(Y,\sG(X)) \xrightarrow{\nat{\beta}{Y,\sG(X)}} \BF(\sF(Y),\sF(\sG(X)) \xrightarrow{(\nat{\epsilon}{X})_*} \BF(\sF(Y),X)
\end{gathered}
\end{equation*}
are inverse to each other for any $Y \in \cat{A}$ and $X \in \cat{B}$. In particular, this yields an isomorphism $\BF(\sF(Y),X) \cong \BE(Y,\sG(X))$ for all $Y \in \cat{A}$ and $X \in \cat{B}$ that is compatible with the adjunction of $(\sF,\beta)$ and $(\sG,\gamma)$. 
\end{lemma}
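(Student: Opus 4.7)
The plan is to verify directly that the two stated maps, call them $\Phi \colon \BF(\sF Y, X) \to \BE(Y, \sG X)$ and $\Psi \colon \BE(Y, \sG X) \to \BF(\sF Y, X)$, are mutually inverse. The arguments for $\Psi\Phi = \id$ and $\Phi\Psi = \id$ are formally dual, each a short elementwise computation that combines three ingredients: naturality of the structure transformations $\beta$ and $\gamma$, the extriangulated-ness of the unit $\nat{\eta}{}$ and counit $\nat{\epsilon}{}$ (phrased via \cref{nat_trans_cup_product}), and the two triangle identities of the adjunction. At degree zero this is the classical derivation of $\Hom{\cat{B}}{\sF Y}{X} \cong \Hom{\cat{A}}{Y}{\sG X}$ via the zig-zag formulas $f \mapsto \sG(f)\circ \nat{\eta}{Y}$, $g \mapsto \nat{\epsilon}{X}\circ \sF(g)$; the novelty here is that $\beta$ and $\gamma$ also need to be moved past unit and counit components, which is exactly where extriangulated-ness of $\nat{\eta}{}$ and $\nat{\epsilon}{}$ enters.

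For $\Psi\Phi = \id$, I would start with $d \in \BF(\sF Y, X)$ and use naturality of $\beta$ in its first variable to rewrite $\nat{\beta}{Y, \sG X} \circ \BE(\nat{\eta}{Y}, \sG X)$ as $\BF(\sF\nat{\eta}{Y}, \sF\sG X) \circ \nat{\beta}{\sG\sF Y, \sG X}$. The inner expression is then the image of $d$ under the natural transformation $\nat{(\beta\circ\gamma)}{\sF Y, X}$ attached to the composite extriangulated functor $\sF\sG$, sitting in $\BF(\sF\sG\sF Y, \sF\sG X)$. Extriangulated-ness of $\nat{\epsilon}{} \colon (\sF\sG, \beta\circ\gamma) \Longrightarrow (\id_{\cat{B}}, \id)$ then identifies $\BF(\sF\sG\sF Y, \nat{\epsilon}{X}) \nat{(\beta\circ\gamma)}{\sF Y, X}(d)$ with $\BF(\nat{\epsilon}{\sF Y}, X)(d)$, and the triangle identity $\nat{\epsilon}{\sF Y} \circ \sF\nat{\eta}{Y} = \id_{\sF Y}$ collapses the remaining precomposition to the identity.

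The computation $\Phi\Psi = \id$ is the mirror: naturality of $\gamma$ in the second variable assembles $\nat{(\gamma\circ\beta)}{Y, \sG X}(e)$; extriangulated-ness of $\nat{\eta}{} \colon (\id_{\cat{A}}, \id) \Longrightarrow (\sG\sF, \gamma\circ\beta)$ replaces $\BE(\nat{\eta}{Y}, \sG\sF\sG X) \nat{(\gamma\circ\beta)}{Y, \sG X}(e)$ by $\BE(Y, \nat{\eta}{\sG X})(e)$; and the triangle identity $\sG\nat{\epsilon}{X}\circ \nat{\eta}{\sG X} = \id_{\sG X}$ closes the argument. The ``in particular'' claim about compatibility with the adjunction is then a direct consequence: naturality of $\Phi, \Psi$ in $Y$ and $X$ follows from naturality of each building block, and by construction the same zig-zag with $\nat{\eta}{Y}$ and $\nat{\epsilon}{X}$ produces the classical Hom-adjunction on the $\Hom$-subgroups (via the connecting maps induced by extriangles). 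The only real obstacle is the bookkeeping of indices — identifying precisely which object is substituted where so that the axioms for $\nat{\eta}{}$ and $\nat{\epsilon}{}$ apply — but once the correct instantiations are located, the two chains of identifications are forced.
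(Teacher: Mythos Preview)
Your proposal is correct and essentially matches the paper's proof: the paper organises the same three ingredients (naturality of $\beta$, extriangulated-ness of $\epsilon$, and the triangle identity $\nat{\epsilon}{\sF Y}\circ \sF\nat{\eta}{Y}=\id_{\sF Y}$) into a single commutative diagram rather than an elementwise chain, and likewise declares the other composite ``similar and omitted.'' The only cosmetic difference is the order in which naturality and the extriangulated condition are invoked; the substance is identical.
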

\begin{proof}
We show that the composition $\BF(\sF(Y),X) \to \BE(Y,\sG(X))\to  \BF(\sF(Y),X)$ is the identity map. 
The proof that the composition $\BE(Y,\sG(X))\to \BF(\sF(Y),X) \to\BE(Y,\sG(X)) $ is the identity is similar and omitted. 
Consider the following diagram:
\begin{equation*}
\begin{tikzcd}
\BF(\sF(Y),X) \ar[r,"{\nat{\gamma}{\sF(Y),X}}"] \ar[d,"{(\nat{\epsilon}{\sF(Y)})^*}"] & \BE(\sG(\sF(Y)),\sG(X)) \ar[d,"{\nat{\beta}{\sG(\sF(Y)),\sG(X)}}" swap] \ar[dr,"{(\nat{\eta}{Y})^*}"] \\
\BF(\sF(\sG(\sF(Y))),X) \ar[d,"{\sF(\nat{\eta}{Y})^*}"] & \BF(\sF(\sG(\sF(Y))),\sF(\sG(X))) \ar[l,"{(\nat{\epsilon}{X})_*}" swap] \ar[d,"{\sF(\nat{\eta}{Y})^*}" swap] & \BE(Y,\sG(X)) \ar[dl,"{\nat{\beta}{Y,\sG(X)}}"] \nospacepunct{.} \\
\BF(\sF(Y),X) & \BF(\sF(Y),\sF(\sG(X))) \ar[l,"{(\nat{\epsilon}{X})_*}" swap]
\end{tikzcd}
\end{equation*}
The top left square commutes since $\epsilon$ is an extriangulated natural transformation. 
The bottom left square commutes by virtue of $\BF$ being a bifunctor. 
The triangle on the right commutes as all the maps are natural. It remains to observe that the vertical composition on the left is the identity.
\end{proof}

\begin{corollary} \label{adjunction_compatible_cup}
Let $(\sF,\beta) \colon (\cat{A},\BE,\fs) \to (\cat{B},\BF,\ft)$ be an extriangulated left adjoint of $(\sG,\gamma) \colon (\cat{B},\BF,\ft) \to (\cat{A},\BE,\fs)$. With the isomorphisms  from \cref{lemma-extriangulated-adjoints} the following diagram commutes
\begin{equation*}
\begin{tikzcd}[column sep=large]
\BF(\sF(X),Y') \ar[d,leftrightarrow,"\cong"] & \Hom{\cat{B}}{\sF(X)}{Y} \ar[r,"(\nat{\beta}{X',X}(d))^*"] \ar[l,"e_*" swap] \ar[d,leftrightarrow,"\cong"] & \BF(\sF(X'),Y) \ar[d,leftrightarrow,"\cong"] \\
\BE(X,\sG(Y')) & \Hom{\cat{A}}{X}{\sG(Y')} \ar[r,"d^*"] \ar[l,"(\nat{\gamma}{Y,Y'}(e))_*" swap] & \BE(X',\sG(Y))
\end{tikzcd}
\end{equation*}
for any $d \in \BE(X',X)$ and $e \in \BF(Y,Y')$.
\end{corollary}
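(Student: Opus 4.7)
The plan is to check commutativity of the left and right squares of the diagram separately. Fix a morphism $f \in \Hom{\cat{B}}{\sF(X)}{Y}$ and let $\bar{f} \coloneqq \sG(f) \circ \eta_X \in \Hom{\cat{A}}{X}{\sG(Y)}$ be its mate under the ordinary adjunction. By \cref{lemma-extriangulated-adjoints}, the vertical isomorphism sends any $h \in \BF(\sF(Z),W)$ to $\nat{\gamma}{\sF(Z),W}(h) \smile \eta_Z \in \BE(Z,\sG(W))$.

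For the left square, I would rewrite $e_*(f) = e \smile f = \BF(\sF(X),f)(e)$ using \cref{higher_ext_coend_Hom}, then apply naturality of $\gamma$ in the second variable to push it past $f$; this yields $\nat{\gamma}{Y,Y'}(e) \smile \sG(f)$. Cupping with $\eta_X$ and using associativity of the cup product then produces $\nat{\gamma}{Y,Y'}(e) \smile \bar{f}$, which is exactly $(\nat{\gamma}{Y,Y'}(e))_*(\bar{f})$ in the bottom row.

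For the right square, I would analogously rewrite $(\nat{\beta}{X',X}(d))^*(f) = f \smile \nat{\beta}{X',X}(d) = \BF(\sF(X'),f)(\nat{\beta}{X',X}(d))$ and push $\gamma$ past $f$ to obtain $\sG(f) \smile \nat{\gamma}{\sF(X'),\sF(X)}(\nat{\beta}{X',X}(d))$. The crucial ingredient here is that the unit $\eta \colon \id \Longrightarrow \sG\sF$ is an extriangulated natural transformation; by \cref{nat_trans_cup_product} this amounts to the identity
\[
\nat{\gamma}{\sF(X'),\sF(X)}(\nat{\beta}{X',X}(d)) \smile \eta_{X'} = \eta_X \smile d\,.
\]
Combining everything and using associativity of the cup product twice yields $\sG(f) \smile \eta_X \smile d = \bar{f} \smile d$, which equals $d^*(\bar{f})$.

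The main obstacle is the right square: without the extriangulatedness of $\eta$ — which is exactly what is built into the definition of an extriangulated adjunction — the key identity swapping $\nat{\gamma}{\sF(X'),\sF(X)}(\nat{\beta}{X',X}(d))$ past $\eta$ would fail. The left square, by contrast, is a routine consequence of the naturality of $\gamma$ and the associativity of the cup product.
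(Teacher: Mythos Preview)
Your approach is correct and is essentially an expansion of the paper's one-line proof, which just observes that the vertical maps are compositions of $\gamma$ with the unit (or $\beta$ with the counit) and then invokes the definition of an extriangulated functor together with \cref{nat_trans_cup_product}. One minor slip to fix in the left square: the identity should read $e \smile f = \BF(f,Y')(e)$ (functoriality of $\BF$ in the \emph{first} variable, cf.\ \cref{higher_ext_coend_Hom}), so it is naturality of $\gamma$ in the first variable that is used there; with this correction the rest goes through exactly as you wrote.
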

\begin{proof}
As the vertical maps are compositions of $\gamma$ and the unit, or $\beta$ and the counit, the claim follows from the definition of an extriangulated functor and \cref{nat_trans_cup_product}.
\end{proof}

From the construction of the higher extensions and \cref{extr_functor_higher_ext} we immediately obtain:

\begin{corollary}
Let $(\sF,\beta) \colon (\cat{A},\BE,\fs) \to (\cat{B},\BF,\ft)$ be an extriangulated left adjoint of $(\sG,\gamma) \colon (\cat{B},\BF,\ft) \to (\cat{A},\BE,\fs)$. We assume that the higher extensions are well-defined in  $(\cat{A},\BE,\fs)$ and $(\cat{B},\BF,\ft)$ through the same construction, in the sense of \Cref{extr_functor_higher_ext}. Then $\BF^n(\sF(Y),X) \cong \BE^n(Y,\sG(X))$ for all $Y \in \cat{A}$ and $X \in \cat{B}$.
Moreover, these isomorphisms are compatible with the cup product similar to \cref{adjunction_compatible_cup}. \qed
\end{corollary}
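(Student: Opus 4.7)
The plan is to mimic the proof of \cref{lemma-extriangulated-adjoints} at each degree. Using the natural transformations $\beta^n$ and $\gamma^n$ supplied by \cref{extr_functor_higher_ext}, I would define candidate maps
\begin{align*}
\Phi_n \colon \BF^n(\sF(Y),X) &\longrightarrow \BE^n(Y,\sG(X)), & d &\longmapsto \nat{\gamma^n}{\sF(Y),X}(d) \smile \eta_Y, \\
\Psi_n \colon \BE^n(Y,\sG(X)) &\longrightarrow \BF^n(\sF(Y),X), & e &\longmapsto \epsilon_X \smile \nat{\beta^n}{Y,\sG(X)}(e),
\end{align*}
where $\eta$ and $\epsilon$ are the unit and counit of the adjunction. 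These are natural in $X$ and $Y$ by naturality of $\beta^n$, $\gamma^n$, $\eta$ and $\epsilon$; for $n=0$ and $n=1$ they reduce to the ordinary hom-adjunction and to \cref{lemma-extriangulated-adjoints}, respectively.

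To show that $\Phi_n$ and $\Psi_n$ are mutually inverse, the essential input is a higher-degree analogue of the extriangulated naturality condition in \cref{nat_trans_cup_product}: for every $d \in \BE^n(Y,X)$ and $e \in \BF^n(Y,X)$,
\begin{equation*}
\eta_X \smile d = \nat{\gamma^n}{\sF(Y),\sF(X)}(\nat{\beta^n}{Y,X}(d)) \smile \eta_Y,
\end{equation*}
together with the dual identity $\epsilon_X \smile \nat{\beta^n}{\sG(Y),\sG(X)}(\nat{\gamma^n}{Y,X}(e)) = e \smile \epsilon_Y$. Given these, the diagram in the proof of \cref{lemma-extriangulated-adjoints} carries over verbatim after replacing $\beta$, $\gamma$ by $\beta^n$, $\gamma^n$, and the triangle identities $\sG(\epsilon_X)\circ\eta_{\sG(X)} = \id_{\sG(X)}$ and $\epsilon_{\sF(Y)}\circ\sF(\eta_Y)=\id_{\sF(Y)}$ collapse both $\Phi_n \circ \Psi_n$ and $\Psi_n \circ \Phi_n$ to the identity by direct cup-product manipulation.

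The higher-degree extriangulated condition is the main obstacle, and I would establish it by induction on $n$. The base case $n=1$ is exactly the hypothesis that $\eta$ is extriangulated. For the inductive step, the coend presentation of \cref{defn_higher_ext_coend} (or, in the syzygy and cosyzygy cases, \cref{cor:higher_ext_equiv}) shows that every element of $\BE^n(Y,X)$ is a finite sum of cup products $d' \smile d''$ with $d' \in \BE(W,X)$ and $d'' \in \BE^{n-1}(Y,W)$ for varying $W \in \cat{A}$. By bilinearity of $\smile$ it suffices to treat such pure cup products: associativity of $\smile$ separates the computation into a factor to which the base case applies ($d'$) and a factor to which the inductive hypothesis applies ($d''$), and the cup-product compatibility of $\beta^n$ and $\gamma^n$ from \cref{extr_functor_higher_ext} recombines the output as $(\gamma^n \circ \beta^n)(d' \smile d'') \smile \eta_Y$. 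The identity for $\epsilon$ is entirely dual.

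Finally, compatibility of the isomorphisms $\Phi_n$ with the cup product follows exactly as in \cref{adjunction_compatible_cup}: the relevant square decomposes into naturality squares for $\beta^n$ and $\gamma^n$ together with their compatibility with $\smile$, both of which are provided by \cref{extr_functor_higher_ext}.
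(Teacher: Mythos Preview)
Your proposal is correct and is essentially what the paper intends: the paper offers no proof beyond the sentence ``From the construction of the higher extensions and \cref{extr_functor_higher_ext} we immediately obtain'', so you are supplying the details behind that claim. Your maps $\Phi_n,\Psi_n$ are the direct higher-degree analogues of the maps in \cref{lemma-extriangulated-adjoints}, built from the $\beta^n,\gamma^n$ of \cref{extr_functor_higher_ext}, and your inductive proof of the higher extriangulated-naturality identity for $\eta$ and $\epsilon$ is exactly the missing ingredient needed to rerun the diagram chase of \cref{lemma-extriangulated-adjoints} in degree $n$.

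One small remark: in the syzygy or cosyzygy case you appeal to \cref{cor:higher_ext_equiv}, but that is stated only under essential smallness. What you actually need is just that the cup product $\BE^{1}\otimes\BE^{n-1}\to\BE^{n}$ is surjective, which follows directly from the syzygy construction (via the surjection $\Hom{\cat{A}}{\Omega W}{-}\twoheadrightarrow\BE(W,-)$ used in \cref{cup_product_cosyz}) without passing through coends. With that adjustment your induction goes through in all three cases covered by \cref{extr_functor_higher_ext}.
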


%%%%%%%%%%%%%%%%%%%%%%%%%%%%%%%%%%%%%%%%%%%%%%%%%%%%%%%%%%%
\subsection{Biextriangulated functors} \label{sec_biextr_functor}

Loosely speaking, a biextriangulated functor is a bifunctor that is extriangulated functor in each component, satisfying an additional compatibility condition between the components.
\Cref{biextri} below only requires an expected compatibility condition; later we discuss a stronger notion.

\begin{definition} \label{biextri}
A \emph{biextriangulated functor} 
\begin{equation*}
(\sF,\phi,\psi) \colon (\cat{A},\BE,\fs) \times (\cat{B},\BF,\ft) \to (\cat{C},\BG,\fu)
\end{equation*}
of extriangulated categories consists of: 
\begin{enumerate}

\item \label{biextri_biadd} a functor $\sF \colon \cat{A} \times \cat{B} \to \cat{C}$; 

\item for each $Y \in \cat{B}$, a natural transformation
\begin{equation*}
\phi^Y \colon \BE(-,-) \to \BG(\sF(-,Y),\sF(-,Y))\,;
\end{equation*}

\item for each $X \in \cat{A}$, a natural transformation
\begin{equation*}
\psi^X \colon \BF(-,-) \to \BG(\sF(X,-),\sF(X,-))\,;
\end{equation*}
\end{enumerate}
such that 
\begin{enumerate}[label=(\alph*), ref=\alph*]

\item \label{biextri_extri_in_2} for each $Y \in \cat{B}$, the functor $(\sF(-,Y), \phi^Y) \colon \mathcal{A} \to \mathcal{C}$ is extriangulated;

\item \label{biextri_extri_in_3} for each $X \in \cat{A}$, the functor $(\sF(X,-), \psi^X) \colon \cat{B} \to \cat{C}$ is extriangulated;

\item \label{biextri_extra_in_B} for $X,X' \in \cat{A}$ and $g \colon Y \to Y'$ in $\cat{B}$, the following diagram commutes
\begin{equation*}
\begin{tikzcd}[column sep=wider]
\BE(X',X) \ar[r,"{\nat{\phi^Y}{X',X}}"] \ar[d,"{\nat{\phi^{Y'}}{X',X}}"] & \BG(\sF(X',Y),\sF(X,Y)) \ar[d,"{\sF(X,g)_*}"] \\
\BG(\sF(X',Y'),\sF(X,Y')) \ar[r,"{\sF(X',g)^*}"] & \BG(\sF(X',Y),\sF(X,Y')) \nospacepunct{;}
\end{tikzcd}
\end{equation*}

\item \label{biextri_extra_in_A} for $f \colon X \to X'$ in $\cat{A}$ and $Y,Y' \in \cat{B}$, the following diagram commutes
\begin{equation*}
\begin{tikzcd}[column sep=wider]
\BF(Y',Y) \ar[r,"{\nat{\psi^X}{Y',Y}}"] \ar[d,"{\nat{\psi^{X'}}{Y',Y}}"] & \BG(\sF(X,Y'),\sF(X,Y)) \ar[d,"{\sF(f,Y)_*}"] \\
\BG(\sF(X',Y'),\sF(X',Y)) \ar[r,"{\sF(f,Y')^*}"] & \BG(\sF(X,Y'),\sF(X',Y)) \nospacepunct{.}
\end{tikzcd}
\end{equation*}
\end{enumerate}
\end{definition}

\begin{remark}
Conditions \cref{biextri_extra_in_B} and \cref{biextri_extra_in_A} mean that $\phi$ and $\psi$ are \emph{extranatural} in $\cat{B}$ and $\cat{A}$, respectively; see \@ \cite{Eilenberg/Kelly:1966}.
\end{remark}

\begin{example} \label{example_exact_bifunctor}
Let $\sF \colon \cat{E} \times \cat{F} \to \cat{G}$ be a \emph{biexact} functor of exact categories; this means that $\sF(X,-)$ and $\sF(-,Y)$ are exact functors for every $X \in \cat{E}$ and $Y \in \cat{F}$; cf.\@ \cite[II.7.4]{Weibel:2013}. We let $\phi^Y$ and $\psi^X$ be the natural transformations obtained by applying $\sF(-,Y)$ and $\sF(X,-)$ to a short exact sequence in $\cat{E}$ and $\cat{F}$, respectively; see \cref{example_exact_functor}. Hence, \cref{biextri_extri_in_2,biextri_extri_in_3} hold. We show that \cref{biextri_extra_in_B} holds without any further assumptions. Let $X \to W \to X'$ be a short exact sequence in $\cat{E}$ and $g \colon Y \to Y'$ a morphism in $\cat{F}$. By \cite[Proposition~3.1]{Buehler:2010}, the morphism of short exact sequences induced by $\sF(-,g)$ factors as follows
\begin{equation*}
\begin{tikzcd}
\sF(X,Y) \ar[r] \ar[d,"{\sF(X,g)}"] & \sF(W,Y) \ar[r] \ar[d] & \sF(X',Y) \ar[d,"="] \\
\sF(X,Y') \ar[r] \ar[d,"="] & Z \ar[r] \ar[d] & \sF(X',Y) \ar[d,"{\sF(X',g)}"] \\
\sF(X,Y') \ar[r] & \sF(W,Y') \ar[r] & \sF(X',Y')
\end{tikzcd}
\end{equation*}
where the top left and bottom right squares are bicartesian. The top row is precisely $\nat{\phi^Y}{X',X}([X \to W \to X'])$ and the bottom row is $\nat{\phi^{Y'}}{X',X}([X \to W \to X'])$. Hence the middle row can be obtained by applying $\Ext[1]{\cat{G}}{\sF(X',Y)}{\sF(X,g)}$ to the top row, or by applying $\Ext[1]{\cat{G}}{\sF(X',g)}{\sF(X,Y')}$ to the bottom row. In particular, \cref{biextri_extra_in_B} holds. An analogous argument shows \cref{biextri_extra_in_A}. So $(\sF,\phi,\psi)$ is a biextriangulated functor. 
 
Conversely, from \cref{example_exact_functor} it is clear that any biextriangulated functor of exact categories is a biexact functor.
\end{example}

\begin{example} \label{example_triangulated_bifunctor}
Let $(\sF,\theta,\zeta) \colon \cat{S} \times \cat{T} \to \cat{U}$ be a \emph{bitriangulated} functor; this means $\theta \colon \sF(\susp -,-) \to \susp \sF(-,-)$ and $\zeta \colon \sF(-,\susp -) \to \susp \sF(-,-)$ are natural transformations such that $(\sF(-,Y),\nat{\theta}{-,Y})$ and $(\sF(X,-),\nat{\zeta}{X,-})$ are triangulated functors for all $X \in \cat{S}$ and $Y \in \cat{T}$, and the following diagram anticommutes. 
\begin{equation} \label{compatibility_triangulated_bifunctor}
\begin{tikzcd}[column sep=large]
\sF(\susp X,\susp Y) \ar[r,"{\nat{\theta}{X,\susp Y}}"] \ar[d,"{\nat{\zeta}{\susp X,Y}}"'] \ar[dr,phantom,"(-1)"] & \susp \sF(X,\susp Y) \ar[d,"{\susp \nat{\zeta}{X,Y}}"] \\
\susp \sF(\susp X,Y) \ar[r,"{\susp \nat{\theta}{X,Y}}"] & \susp^2 \sF(X,Y) \nospacepunct{.}
\end{tikzcd}
\end{equation}
We define the natural transformations $\phi^Y$ and $\psi^X$ using $\nat{\theta}{-,Y}$  and $\nat{\zeta}{X,-}$, respectively, similarly to how $\beta$ was defined in \cref{example_triangulated_functor}. Hence \cref{biextri_extri_in_2,biextri_extri_in_3} hold. In this setting,  condition \cref{biextri_extra_in_B} is  the identification
\begin{equation*}
(\susp \sF(X,g)) \circ \nat{\theta}{X,Y} \circ \sF(f,Y) = \nat{\theta}{X,Y'} \circ \sF(f,Y') \circ \sF(X',g)
\end{equation*}
for all $f \colon X' \to \susp X$ in $\cat{S}$ and $g \colon Y \to Y'$ in $\cat{T}$. This holds as $\theta$, $\sF(f,-)$ and $\sF(-,g)$ are natural transformations. An analogous argument shows \cref{biextri_extra_in_A}. So $(\sF,\phi,\psi)$ is a biextriangulated functor. 
\end{example}

The above argument does not use \cref{compatibility_triangulated_bifunctor}. In fact, a biextriangulated functor of triangulated categories need not induce a bitriangulated functor. We resolve this discrepancy using `strong' biextriangulated functors in the next \namecref{sec:strong-biextriangulated-functors}.

%%%%%%%%%%%%%%%%%%%%%%%%%%%%%%%%%%%%%%%%%%%%%%%%%%%%%%%%%%%
\subsection{Strong biextriangulated functors}
\label{sec:strong-biextriangulated-functors}

We now consider a stronger compatibility condition between the components of a biextriangulated functor. This will resolve the mismatch between biextriangulated and bitriangulated functors of triangulated categories, while not affecting the match between biextriangulated and biexact functors of exact categories. A possible disadvantage of this definition is  that we need to assume that higher extensions exist, however these always exist for triangulated categories. 

\begin{definition} \label{strong_extri_bifunctor}
Let $\sF = (\sF,\phi,\psi) \colon (\cat{A},\BE,\fs) \times (\cat{B},\BF,\ft) \to (\cat{C},\BG,\fu)$ be a biextriangulated functor. We assume that higher extensions are well-defined in $(\cat{C},\BG,\fu)$. We say $\sF$ is \emph{strong}, if for every $d \in \BE(X,X')$ and $e \in \BF(Y,Y')$ one has
\begin{equation} \label{eq:strong_extri_bifunctor}
\nat{\psi^{X'}}{Y,Y'}(e) \smile \nat{\phi^Y}{X,X'}(d) = - \nat{\phi^{Y'}}{X,X'}(d) \smile \nat{\psi^X}{Y,Y'}(e)
\end{equation}
in $\BG^2(\sF(X,Y),\sF(X',Y'))$. 
\end{definition}

\begin{lemma} \label{strong_bifunctor_higher_ext}
Let $\sF = (\sF,\phi,\psi) \colon (\cat{A},\BE,\fs) \times (\cat{B},\BF,\ft) \to (\cat{C},\BG,\fu)$ be a strong biextriangulated functor and assume that higher extensions are well-defined in  each of the categories through the same construction, as in \Cref{extr_functor_higher_ext}. Then
\begin{equation*}
\nat{(\psi^{X'})^j}{Y,Y'}(e) \smile \nat{(\phi^Y)^i}{X,X'}(d) = (-1)^{ij} \nat{(\phi^{Y'})^i}{X,X'}(d) \smile \nat{(\psi^X)^j}{Y,Y'}(e)
\end{equation*}
for every $d \in \BE^i(X,X')$ and $e \in \BF^j(Y,Y')$.
\end{lemma}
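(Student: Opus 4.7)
The plan is to prove the identity by induction on $n = i + j$, using the strong compatibility \cref{eq:strong_extri_bifunctor} as the key base case together with the compatibility of $(\phi^Y)^i$ and $(\psi^X)^j$ with the cup product from \cref{extr_functor_higher_ext}.

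For the base cases: when $n = 0$ both sides equal $\sF(d,e)$ by bifunctoriality of $\sF$. When $n = 1$, the case $(i,j) = (1,0)$ translates, using that $(\psi^{X'})^0(e) = \sF(X',e)$ and $(\psi^X)^0(e) = \sF(X,e)$ together with the cup-product formulae \cref{higher_ext_coend_Hom}, into exactly the extranaturality condition \cref{biextri_extra_in_B}; the case $(0,1)$ is the symmetric \cref{biextri_extra_in_A}. The remaining case $(i,j) = (1,1)$ is \cref{eq:strong_extri_bifunctor}.

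For the inductive step, fix $n \geq 2$ with $(i,j) \neq (1,1)$. Then $i \geq 2$ or $j \geq 2$; by symmetry assume $i \geq 2$. Using the chosen construction of higher extensions, every element of $\BE^i(X,X')$ is a finite sum of cup products $d' \smile d''$ with $d' \in \BE^{i-1}(X'',X')$ and $d'' \in \BE^1(X,X'')$ for some $X'' \in \cat{A}$: for coends this follows from the colimit presentation in \cref{defn_higher_ext_coend}; for (co)syzygies one may take $X'' = \Omega X$ and write $d = d' \smile \omega_X$, where $\omega_X \in \BE^1(X, \Omega X)$ is the canonical extension class of the syzygy extriangle and $d' \in \BE^{i-1}(\Omega X, X')$ is the representative of $d$ under the identification $\BE^{i-1}(\Omega X, X') = \BE(\Omega^{i-1} X, X')$. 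By $R$-linearity of both sides of the claim it suffices to verify the identity on such a single cup product.

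For a decomposed $d = d' \smile d''$, \cref{extr_functor_higher_ext} gives $(\phi^Y)^i(d) = (\phi^Y)^{i-1}(d') \smile (\phi^Y)^1(d'')$ and likewise for $Y'$. Associativity of the cup product yields
\begin{equation*}
(\psi^{X'})^j(e) \smile (\phi^Y)^i(d) = \bigl((\psi^{X'})^j(e) \smile (\phi^Y)^{i-1}(d')\bigr) \smile (\phi^Y)^1(d'')\,.
\end{equation*}
The inductive hypothesis at the pair $(i-1,j)$ replaces $(\psi^{X'})^j(e) \smile (\phi^Y)^{i-1}(d')$ by $(-1)^{(i-1)j}\, (\phi^{Y'})^{i-1}(d') \smile (\psi^{X''})^j(e)$, and a second application at the pair $(1,j)$ moves $(\psi^{X''})^j(e)$ past $(\phi^Y)^1(d'')$ at further cost $(-1)^j$. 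Reassembling via $(\phi^{Y'})^{i-1}(d') \smile (\phi^{Y'})^1(d'') = (\phi^{Y'})^i(d)$ and combining signs as $(-1)^{(i-1)j + j} = (-1)^{ij}$ produces the right-hand side. The main obstacle is to establish the cup-product decomposition uniformly across the three constructions of higher extensions; once this is granted, the rest is associativity and Koszul sign bookkeeping.
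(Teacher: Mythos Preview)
Your proof is correct and follows essentially the same approach as the paper. The paper's argument is a two-sentence sketch: represent $(\phi^Y)^i$ via $(\phi^Y)^{\otimes i}$ on an $i$-fold tensor product of degree-$1$ extensions (and similarly for $(\psi^X)^j$), then interchange the $\BE$- and $\BF$-factors one at a time, each swap contributing a sign from \cref{eq:strong_extri_bifunctor}. Your induction on $i+j$, with the cup-product decomposition $d = d' \smile d''$ and two applications of the hypothesis, is precisely a careful formalisation of this interchange procedure; you also make explicit the degree-$0$ and mixed-degree base cases via bifunctoriality and extranaturality \cref{biextri_extra_in_B,biextri_extra_in_A}, which the paper leaves implicit.
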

\begin{proof}
By the construction in \cref{extr_functor_higher_ext}, the natural transformation $(\phi^Y)^i$ is fully determined by $(\phi^Y)^{\otimes i}$ on
\begin{equation*}
\BE(W_1,X) \otimes_R \BE(W_2,W_1) \otimes_R \cdots \otimes_R \BE(Y,W_{i-1})\,.
\end{equation*}
We obtain the claim by interchanging the $\BE$ and $\BF$ factors of $\BE^i(X,X') \otimes_R \BF^j(Y,Y')$. 
\end{proof}

For exact categories  condition \cref{eq:strong_extri_bifunctor} is superfluous: 

\begin{proposition} \label{lem:biexact-implies-biextriangulated}
Let $\sF \colon \cat{E} \times \cat{F} \to \cat{G}$ be a biexact functor. Then $\sF$ is strong viewed as a biextriangulated functor. In particular, there is a one-to-one correspondence between biexact functors and strong biextriangulated functors of exact categories. 
% Any biexact functor of exact categories $\sF \colon \cat{E} \times \cat{F} \to \cat{G}$ is strong as a biextriangulated functor.
% Therefore, a bifunctor of exact categories is biexact if and only if it is strong biextriangulated.
\end{proposition}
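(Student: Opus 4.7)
The plan is to use \cref{example_exact_bifunctor} to handle the biextriangulated part of the claim, and then to verify the strong identity \eqref{eq:strong_extri_bifunctor} by an explicit computation with Yoneda $2$-fold extensions using the biexactness of $\sF$. By the cited example, $\sF$ together with the natural transformations $\phi^{Y}$ and $\psi^{X}$ obtained by applying $\sF(-,Y)$ and $\sF(X,-)$ to short exact sequences is biextriangulated. Since higher extensions in exact categories are Yoneda equivalence classes of $n$-fold extensions by \cref{example_exact_coend}, the identity \eqref{eq:strong_extri_bifunctor} lives in $\BG^{2}(\sF(X,Y),\sF(X',Y'))$ and can be verified directly on representatives.

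Fix representatives $d = [X' \to W \to X]$ and $e = [Y' \to V \to Y]$. Applying $\sF$ componentwise produces a commutative $3 \times 3$ diagram in $\cat{G}$ whose rows and columns are short exact by biexactness:
\[
\begin{tikzcd}[column sep=small, row sep=small]
\sF(X',Y') \ar[r] \ar[d] & \sF(W,Y') \ar[r] \ar[d] & \sF(X,Y') \ar[d] \\
\sF(X',V) \ar[r] \ar[d] & \sF(W,V) \ar[r] \ar[d] & \sF(X,V) \ar[d] \\
\sF(X',Y) \ar[r] & \sF(W,Y) \ar[r] & \sF(X,Y) \nospacepunct{.}
\end{tikzcd}
\]
Unravelling the Yoneda splice from \cref{example_exact_coend}, the left-hand side $\nat{\psi^{X'}}{Y,Y'}(e) \smile \nat{\phi^{Y}}{X,X'}(d)$ is the $2$-fold extension obtained by concatenating the left column with the bottom row of this grid, while $\nat{\phi^{Y'}}{X,X'}(d) \smile \nat{\psi^{X}}{Y,Y'}(e)$ is the concatenation of the top row with the right column.

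The strong condition thus reduces to the classical anti-commutativity lemma: in a commutative $3 \times 3$ diagram of short exact sequences in an exact category, the two ``corner'' $2$-fold extensions from the top-left vertex to the bottom-right vertex are negatives of one another in Yoneda $\mathrm{Ext}^{2}$. I would verify this by routing both $2$-extensions through the central object $\sF(W,V)$: each corner path admits an explicit Yoneda equivalence to a common $2$-extension built from the middle row (or column) together with the appropriate outer pieces, and a careful comparison of these equivalences produces the sign. The main obstacle is the sign bookkeeping, but the underlying fact is standard homological algebra.

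The second sentence of the proposition is then immediate: the forward direction has been established, and conversely any strong biextriangulated functor is in particular biextriangulated, hence biexact by \cref{example_exact_bifunctor}.
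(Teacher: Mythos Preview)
Your proposal is correct and follows essentially the same route as the paper: both reduce \eqref{eq:strong_extri_bifunctor} to the anti-commutativity of the two corner $2$-fold extensions in the $3\times3$ grid of short exact sequences obtained by applying $\sF$ to the chosen representatives. The only difference is that you invoke this anti-commutativity as a known classical fact, whereas the paper re-derives it explicitly: it forms the pushout of the span $\sF(X,Y')\leftarrow\sF(X,X')\to\sF(Y,X')$ to produce a single intermediate $2$-extension $[\sF(X,X')\to\sF(X,Y')\oplus\sF(Y,X')\to\sF(Y,Y')\to\sF(Z,Z')]$ passing through the central object, and then exhibits morphisms of $2$-extensions from this to each corner splice, one of which carries the sign $-1$ on the identity.
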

\begin{proof}
Let $d = [X \xrightarrow{f} Y \xrightarrow{g} Z] \in \Ext[1]{\cat{E}}{X}{Z}$ and $e = [X' \xrightarrow{f'} Y' \xrightarrow{g'} Z'] \in \Ext[1]{\cat{F}}{X'}{Z'}$. Then $\sF(d,X') \smile \sF(Z,e)$ is
\begin{equation*}
[\sF(X,X') \xrightarrow{\sF(f,X')} \sF(Y,X') \xrightarrow{\sF(g,f')} \sF(Z,Y') \xrightarrow{\sF(Z,g')} \sF(Z,Z')]
\end{equation*}
and $\sF(X,e) \smile \sF(d,Z')$ is
\begin{equation*}
[\sF(X,X') \xrightarrow{\sF(X,f')} \sF(X,Y') \xrightarrow{\sF(f,g')} \sF(Y,Z') \xrightarrow{\sF(g,Z')} \sF(Z,Z')]\,;
\end{equation*}
see \cref{example_exact_coend}. We take the pushout of the span $\sF(X,Y') \gets \sF(X,X') \to \sF(Y,X')$ and obtain a morphism of exact sequences
\begin{equation*}
\begin{tikzcd}[column sep = 1.2cm]
\sF(X,X') \ar[r,"{\sF(f,X')}"] \ar[d,"{\sF(X,f')}"] & \sF(Y,X') \ar[r,"{\sF(g,X')}"] \ar[d] & \sF(Z,X') \ar[d,"="] \\
\sF(X,Y') \ar[r, "a"] & W \ar[r, "b"] & \sF(Z,X') \nospacepunct{.}
\end{tikzcd}
\end{equation*}
The universal property of the pushout yields the commutative diagram
\begin{equation*}
\begin{tikzcd}[column sep = 1.2cm]
\sF(X,Y') \ar[d] \ar[r, "a"] & W \ar[d, "b"] \ar[r, "c"] & \sF(Y,Y') \ar[d, "{\sF(g,Y')}"] \\
0 \ar[r] & \sF(Z,X') \ar[r, "{\sF(Z,f')}"] \ar[d] & \sF(Z,Y') \ar[d, "{\sF(Z,g')}"] \\
& 0 \ar[r] & \sF(Z,Z')
\end{tikzcd}
\end{equation*}
such that $ca=\sF(f,Y')$, and so the rectangle defined by the top two squares is a pushout square. 
Since $(a,b)$ is a kernel-cokernel pair the top left square is a pushout square. 
By the pasting lemma for pushout squares, every square is a pushout square; see \cite[p.~72,~Exercise~8]{MacLane:1998}. 
So we obtain an extension 
\begin{equation*}
[\sF(X,X') \xrightarrow{\left(\begin{smallmatrix} \sF(X,f') \\ -\sF(f,X') \end{smallmatrix}\right)} \sF(X,Y') \oplus \sF(Y,X') \to \sF(Y,Y') \to \sF(Z,Z')]\,,
\end{equation*}
and a commutative diagram
\begin{equation*}
\begin{tikzcd}[column sep = 1.54cm]
\sF(X,X') \ar[r,"{-\sF(f,X')}"] & \sF(Y,X') \ar[r,"{\sF(g,f')}"] & \sF(Z,Y') \ar[r,"{\sF(Z,g')}"] & \sF(Z,Z') \\
\sF(X,X') \ar[r,"{\left(\begin{smallmatrix} \sF(X,f') \\ -\sF(f,X') \end{smallmatrix}\right)}"] \ar[u,"=" swap] \ar[d,"="] & \sF(X,Y') \oplus \sF(Y,X') \ar[r] \ar[u] \ar[d] & \sF(Y,Y') \ar[r] \ar[u] \ar[d] & \sF(Z,Z') \ar[u,"=" swap] \ar[d,"="] \\
\sF(X,X') \ar[r,"{\sF(X,f')}"] & \sF(X,Y') \ar[r,"{\sF(f,g')}"] & \sF(Y,Z') \ar[r,"{\sF(g,Z')}"] & \sF(Z,Z')
\end{tikzcd}
\end{equation*}
which exhibits the first claim. The second assertion follows from combining this with \cref{example_exact_bifunctor}.
\end{proof}

\begin{proposition} \label{prop:bitriangulated-implies-strong-biextriangulated}
Let $(\sF,\phi,\psi) \colon \cat{S} \times \cat{T} \to \cat{U}$ be a strong biextriangulated functor of triangulated categories. With
\begin{equation*}
\begin{aligned}
\nat{\theta}{X,Y} &\coloneqq \nat{\phi^Y}{\susp X,X}(\id_{\susp X}) \colon \sF(\susp X,Y) \to \susp \sF(X,Y) \quad \text{and} \\
\nat{\zeta}{X,Y} &\coloneqq \nat{\psi^X}{\susp Y,Y}(\id_{\susp Y}) \colon \sF(X,\susp Y) \to \susp \sF(X,Y)
\end{aligned}
\end{equation*}
the functor $(\sF,\theta,\zeta)$ is a bitriangulated functor. 

Moreover, there is a one-to-one correspondence between bitriangulated functors and strong biextriangulated functors on triangulated categories. 
\end{proposition}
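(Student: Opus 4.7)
The plan is to localise the use of the strong hypothesis to a single step, and otherwise leverage the single-variable correspondence from \cref{example_triangulated_functor} applied in each slot. By \cref{biextri_extri_in_2,biextri_extri_in_3}, $(\sF(-,Y),\phi^Y)$ and $(\sF(X,-),\psi^X)$ are extriangulated functors of triangulated categories for every $X\in \cat{S}$ and $Y\in \cat{T}$. Applying \cref{example_triangulated_functor} in each slot produces triangulated functors whose associated natural isomorphisms are precisely $\nat{\theta}{-,Y}$ and $\nat{\zeta}{X,-}$ as in the statement; evaluating condition \cref{biextri_extra_in_B} at $d = \id_{\susp X}$ shows $\theta$ is natural in $Y$, and dually \cref{biextri_extra_in_A} at $e = \id_{\susp Y}$ shows $\zeta$ is natural in $X$.

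The remaining condition for $(\sF,\theta,\zeta)$ to be bitriangulated is the anticommutativity of \cref{compatibility_triangulated_bifunctor}, and this is where the strong hypothesis intervenes. Under the identification $\BG^n(-,-) = \Hom{\cat{U}}{-}{\susp^n -}$ recorded in \cref{example_triangulated_coend}, the cup product reads $d \smile e = (\susp^{j} d)\circ e$ for $d$ of degree $i$ and $e$ of degree $j$. Apply \cref{eq:strong_extri_bifunctor} with $d = \id_{\susp X}\in \BE(\susp X,X)$ and $e = \id_{\susp Y} \in \BF(\susp Y,Y)$, after relabelling the formal parameters $(X,X',Y,Y')$ there as $(\susp X, X, \susp Y, Y)$. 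By the definition of $\theta$ and $\zeta$, the four factors in \cref{eq:strong_extri_bifunctor} become $\nat{\zeta}{X,Y}$, $\nat{\theta}{X,\susp Y}$, $\nat{\theta}{X,Y}$ and $\nat{\zeta}{\susp X,Y}$, and the cup-product formula unfolds the strong identity to
\[
\susp \nat{\zeta}{X,Y} \circ \nat{\theta}{X,\susp Y} \;=\; -\,\susp \nat{\theta}{X,Y} \circ \nat{\zeta}{\susp X,Y}\,,
\]
which is precisely the anticommutativity of \cref{compatibility_triangulated_bifunctor}.

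For the one-to-one correspondence, \cref{example_triangulated_bifunctor} provides the inverse assignment, sending a bitriangulated functor $(\sF,\theta,\zeta)$ to a biextriangulated functor via post-composition with $\theta$ and $\zeta$. To see this biextriangulated functor is strong, expand both sides of \cref{eq:strong_extri_bifunctor} using this definition, apply naturality of $\theta$ in $\cat{T}$ and of $\zeta$ in $\cat{S}$, and use the bifunctoriality $\sF(\susp X',e)\circ \sF(d,Y) = \sF(d,\susp Y')\circ \sF(X,e)$; the expanded identity then reduces to the anticommutativity of \cref{compatibility_triangulated_bifunctor} post-composed with $\sF(d,\susp Y')\circ \sF(X,e)$. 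That the two assignments $(\phi,\psi) \leftrightarrow (\theta,\zeta)$ are mutually inverse reduces, slot by slot, to the analogous mutual-inverse statement for a single extriangulated/triangulated functor, which is already contained in \cref{example_triangulated_functor}.

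The main obstacle will be the sign and indexing bookkeeping when unfolding the cup product in $\BG^2$: one has to match, term by term, each evaluation of $\phi$ or $\psi$ at $\id_{\susp X}$ or $\id_{\susp Y}$ with the corresponding arrow of the bitriangulated compatibility square, and track how the sign $(-1)$ in \cref{eq:strong_extri_bifunctor} corresponds to the $(-1)$ labelling the anticommutative square \cref{compatibility_triangulated_bifunctor} through the formula $d \smile e = (\susp^j d)\circ e$.
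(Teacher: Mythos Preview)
Your proof is correct and follows essentially the same approach as the paper: reduce the componentwise triangulated structure to \cref{example_triangulated_functor}, then obtain the anticommutativity of \cref{compatibility_triangulated_bifunctor} by specialising the strong condition \cref{eq:strong_extri_bifunctor} at $d=\id_{\susp X}$, $e=\id_{\susp Y}$ and unfolding via the cup product formula of \cref{example_triangulated_coend}. You in fact supply more detail than the paper does: your explicit use of \cref{biextri_extra_in_B,biextri_extra_in_A} to establish naturality of $\theta$ in $Y$ and $\zeta$ in $X$ fills in a point the paper leaves implicit, and your expansion of the converse direction (bitriangulated implies strong) makes precise what the paper dismisses as ``straightforward to check''.
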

\begin{proof}
We assume that $(\sF,\phi,\psi) \colon \cat{S} \times \cat{T} \to \cat{U}$ is a biextriangulated functor of triangulated categories, and that  $\theta$ and $\zeta$ are given as above. From \cref{example_triangulated_functor} we know that $\sF$, together with $\theta$ and $\zeta$, is a triangulated functor in each component. It remains to show \cref{compatibility_triangulated_bifunctor}. By \cref{example_triangulated_coend} this is equivalent to
\begin{equation*}
\nat{\zeta}{X,Y} \smile \nat{\theta}{X,\susp Y} = - \nat{\theta}{X,Y} \smile \nat{\zeta}{\susp X,Y}\,.
\end{equation*}
This is just a specialisation of the definition of a strong biextriangulated functor.

For the second part, it remains to show that the biextriangulated functor that is induced by a bitriangulated functor is strong. This is straightforward to check using \cref{example_triangulated_coend,example_triangulated_functor}.
\end{proof}

We end this section with examples of biextriangulated functors. For derived categories of module categories, the derived tensor product and derived Hom-functor are biextriangulated. However, for module categories with the abelian exact structure the tensor product and the Hom-functor need not be biextriangulated. This can be rectified by restricting to flat modules. 

\begin{example} \label{example-flat-functors}
Let $R$ be a commutative ring. Let $\cat{X}$ be an essentially small $R$-linear category. Let $\lMod{\cat{X}}$ and $\rMod{\cat{X}}$ be the categories of $R$-linear covariant and contravariant functors, respectively, of the form $\cat{X} \to \Mod{R}$. For  $\sF \in \rMod{\cat{X}}$ and $\sG \in \lMod{\cat{X}}$ define an $R$-bilinear functor $\op{\cat{X}}\times\cat{X}\to\Mod{R}$ by $(X,Y)\mapsto \sF(X)\otimes_{R} \sG(Y)$. Taking coends, discussed in \cref{higher_ext_coend}, gives a functor 
\begin{equation*}
\otimes_{\cat{X}}\colon \rMod{\cat{X}}\times \lMod{\cat{X}}\to \Mod{R} \,,\quad (\sF,\sG) \mapsto \int^W \sF(W) \otimes_R \sG(W)\,;
\end{equation*}
see \cite[Section~2]{Fisher:1968}. When $\cat{X}$ is a category with one object, the endomorphism ring is an $R$-algebra $A$ and $\otimes_\cat{X}$ is the classical tensor product of modules over $A$. 

Let $\lFlat{\cat{X}}$ be the full subcategory of $\lMod{\cat{X}}$ consisting of functors $\sF$ such that $\sF \otimes_{\cat{X}} - \colon \rMod{\cat{X}} \to \Mod{R}$ is exact, meaning $\sF$ is \emph{flat} in the sense of \cite[Theorem~3.2(1)]{Oberst-Rohrl:1970}. 
% Similarly, the subcategory $\Flat{\op{\cat{X}}}$ of $\rMod{\cat{X}}$ consisting of functors $\sG$ such that $- \otimes_{\cat{X}} \sG$ is exact. 
The subcategory $\lFlat{\cat{X}}$ (respectively, $\rFlat{\cat{X}}$) is  extension-closed in $\lMod{\cat{X}}$ (respectively,  $\rMod{\cat{X}}$). 
With the abelian exact structure on $\Mod{R}$, one can check that the restriction of $\otimes_{\cat{X}}$ defines a biexact functor 
\begin{equation*}
\rFlat{\cat{X}}\times \lFlat{\cat{X}} \to \Mod{R}\,.
\end{equation*}
Thus this restriction is a strong biextriangulated functor by \cref{example_exact_bifunctor}.  

% A result of Dung and Garc{\'i}a \cite[Theorem 1.1]{Dung-Garcia:2001} says: that $\Flat{\op{\cat{X}}}$ is \emph{locally finitely-presented} as an additive category in the sense of  Crawley-Boevey \cite{Crawley-Boevey:1994}; that every locally finitely-presented additive category arises this way; and that any equivalence of such categories gives rise to a Morita equivalence, in the sense of {\'A}nh and M{\'a}rki \cite{Anh-Marki:1987}, of the corresponding \emph{functor rings}, discussed below. Note that $\Flat{\op{\cat{X}}}$ need not have kernels; even if $\cat{X}$ is locally finitely-presented the existence of kernels was characterised by Crawley-Boevey \cite[p.~1651,~Theorem]{Crawley-Boevey:1994}. 

The functor categories and their subcategories of flat functors are strongly connected to the category of modules over a non-unital $R$-algebra. For a small $R$-linear category $\cat{X}$ the \emph{functor ring} is $A\coloneqq\bigoplus_{X,Y \in \sk(\cat{X})} \Hom{\cat{X}}{X}{Y}$; this ring was  studied in \cite[Chapter~II]{Gabriel:1962}, also see \cite{Garcia--Martinez-Hernandez:1992}. 
This is a possibly non-unital ring with enough idempotents. 
By \cite[II, Proposition~2]{Gabriel:1962}, there is an $R$-linear equivalence between $\lMod{\cat{X}}$ and the category of unital left $A$-modules; one says $M$ is \emph{unital} if $AM=M$. This equivalence restricts to an equivalence between $\lFlat{\cat{X}}$ and the category of flat unital left $A$-modules. Similarly, the category $\rMod{\cat{X}}$ (respectively $\rFlat{\cat{X}}$) is equivalent to the category of (respectively, flat) unital right $A$-modules. Under this equivalence, the functor $\otimes_\cat{X}$ corresponds to the classical tensor product $\otimes_A$; see for example \cite[Lemma~2]{ElKaoutit:2020}. 
\end{example}

%%%%%%%%%%%%%%%%%%%%%%%%%%%%%%%%%%%%%%%%%%%%%%%%%%%%%%%%%%%
\subsection{Stabilisation of extriangulated categories} \label{subsec:stabilisation}

There are a few ways to construct extriangulated categories. One is as a stabilisation of an exact category. We will see that any biextriangulated functor induced on the stabilisation from a biexact functor is strong. 

Let $(\cat{A},\BE,\fs)$ be an extriangulated category. Let $\cat{I}$ be a full subcategory of $\cat{A}$ that is closed under finite direct sums and consists of objects that are $\BE$-projective and $\BE$-injective; for the definition of the latter see \cref{higher_ext_proj_inj}. 
The \emph{stabilisation of $\cat{A}$ along $\cat{I}$}, denoted by $\stab[\cat{I}]{\cat{A}}$, has the same objects as $\cat{A}$ and the morphisms are defined by 
\begin{equation*}
\Hom{\stab[\cat{I}]{\cat{A}}}{X}{Y} \coloneqq \Hom{\cat{A}}{X}{Y}/[\cat{I}](X,Y)\,,
\end{equation*}
where $[\cat{I}](X,Y)$ is the two-sided ideal of morphisms $X \to Y$ in $\cat{A}$ factoring through objects in $\cat{I}$. 

By \cite[Proposition~3.30]{Nakaoka/Palu:2019}, the extriangulated structure on $\cat{A}$ descends to the stabilisation $\stab[\cat{I}]{\cat{A}}$. The extriangulated structure $(\stab{\BE},\stab{\fs})$ on $\stab[\cat{I}]{\cat{A}}$ is as follows: For objects $X,Y \in \stab[\cat{I}]{\cat{A}}$, we have 
\begin{equation*}
\stab{\BE}(Y,X) = \BE(Y,X) \quad \text{and} \quad \stab{\fs}(d) = [X \xrightarrow{[f]} W \xrightarrow{[g]} Y]
\end{equation*}
where $\fs(d) = [X \xrightarrow{f} W \xrightarrow{g} Y]$. 

\begin{lemma} \label{stab_higher_ext}
Let $(\cat{A},\BE,\fs)$ be an extriangulated category and $\cat{I}$ a full subcategory closed under finite direct sums that consists of objects that are $\BE$-projective and $\BE$-injective. If $\cat{A}$ has enough $\BE$-projectives or $\BE$-injectives, then so does $\stab[\cat{I}]{\cat{A}}$. 

Furthermore, whenever higher extensions are well-defined in $(\cat{A},\BE,\fs)$, then they are well-defined in $(\stab[\cat{I}]{\cat{A}},\stab{\BE},\stab{\fs})$ through the same construction, and
\begin{equation*}
\stab{\BE}^n(X,Y) = \BE^n(X,Y)
\end{equation*}
for all $X,Y \in \cat{A}$ and $n \geq 1$. 
\end{lemma}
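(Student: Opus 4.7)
The plan splits along the two claims. For preservation of enough projectives (the injective case being dual), I would take any extriangle $\Omega X \to P \to X \dashrightarrow$ in $\cat{A}$ with $P$ an $\BE$-projective. Since the extriangulated structure descends to $\stab[\cat{I}]{\cat{A}}$ and $\stab{\BE}$ agrees with $\BE$ on objects, the same data defines an extriangle in the stabilisation, and $P$ remains $\stab{\BE}$-projective because $\stab{\BE}(P,-) = \BE(P,-) = 0$.

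For the identification of higher extensions, I would handle each of the three constructions separately. In the syzygy construction, using the same choice of syzygy in both categories immediately gives
\[
\stab{\BE}^n_\Omega(X,Y) = \stab{\BE}(\Omega^{n-1}X, Y) = \BE(\Omega^{n-1}X, Y) = \BE^n_\Omega(X,Y)
\]
for $n \geq 1$; the cosyzygy construction is dual.

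The coend case is more involved since the two coends range over different indexing categories. The key input is that each $I \in \cat{I}$ is both $\BE$-projective and $\BE$-injective, so $\BE(I,-) = 0 = \BE(-,I)$. Induction on $n$ using the defining coend formula \cref{defn_higher_ext_coend} then yields $\BE^n(I,-) = 0 = \BE^n(-,I)$ for every $n \geq 1$, since the tensor factor $\BE(I,W)$ or $\BE(W,I)$ inside the coend vanishes. Consequently, any morphism in $\cat{A}$ factoring through $\cat{I}$ acts as zero on $\BE^n$ for $n \geq 1$, so $\BE^n$ descends along the quotient functor $\cat{A} \to \stab[\cat{I}]{\cat{A}}$. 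The extra coequaliser relations indexing $\int^{W \in \cat{A}}$ beyond those of $\int^{W \in \stab[\cat{I}]{\cat{A}}}$ come from morphisms in the ideal $[\cat{I}]$; these act trivially on the coend integrand, so an inductive comparison produces $\BE^n(X,Y) = \stab{\BE}^n(X,Y)$ for $n \geq 1$.

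The main obstacle I expect is this last case, where the coends sit over different indexing categories connected by the quotient functor; the crucial point is that all the additional identifications contributed by morphisms in $[\cat{I}]$ become vacuous once one establishes the vanishing of higher extensions at objects of $\cat{I}$.
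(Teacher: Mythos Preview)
Your proposal is correct and follows the same approach as the paper, which simply asserts that $\BE$-projectives/injectives and extriangles survive stabilisation for the first claim, and that ``the construction of the higher extensions commutes with the stabilisation'' for the second. In fact you supply considerably more detail than the paper does---particularly in the coend case, where the paper offers no justification beyond calling it straightforward.
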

\begin{proof}
The first claim holds, as under stabilisation $\BE$-projective and $\BE$-injective objects become $\stab{\BE}$-projective and $\stab{\BE}$-injective objects, respectively, and stabilisation preserves extriangles. The second claim is straightforward to check as the construction of the higher extensions commutes with the stabilisation.
\end{proof}

\Cref{stab_functor} below recovers the $n=1 $ case of  \cite[Example~5.9]{BenTenHaugSandShah} where $(\sF,\beta)$ is the identity, $\cat{I}=0$ and $\cat{J}$ consists of all the projective-injectives. 

\begin{lemma} \label{stab_functor}
Let $(\sF,\beta) \colon (\cat{A},\BE,\fs) \to (\cat{B},\BF,\ft)$ be an extriangulated functor. Let $\cat{I}$ and $\cat{J}$ be full subcategories of $\cat{A}$ and $\cat{B}$, respectively, that are closed under finite direct sums and consist of objects that are projective and injective with respect to the respective extriangulated structure. 
If $\sF(\cat{I}) \subseteq \cat{J}$, then $(\sF,\beta)$ induces an extriangulated functor 
\[
(\stab{\sF},\stab{\beta}) \colon (\stab[\cat{I}]{\cat{A}},\stab{\BE},\stab{\fs}) \to (\stab[\cat{J}]{\cat{B}},\stab{\BF},\stab{\ft})\nospacepunct{.}
\]
\end{lemma}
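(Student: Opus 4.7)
The plan is to define $\stab{\sF}$ on objects by $\stab{\sF}(X) \coloneqq \sF(X)$ and on morphisms by $\stab{\sF}([f]) \coloneqq [\sF(f)]$. Well-definedness on morphism classes is the first thing I would check: if $f \colon X \to Y$ in $\cat{A}$ factors as $X \xrightarrow{a} I \xrightarrow{b} Y$ with $I \in \cat{I}$, then $\sF(f) = \sF(b)\sF(a)$ factors through $\sF(I)$, which lies in $\cat{J}$ by hypothesis, so $[\sF(f)] = 0$ in $\stab[\cat{J}]{\cat{B}}$. Additivity and functoriality of $\stab{\sF}$ are then immediate from those of $\sF$.

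Next, since the description of the stabilised extriangulated structure gives $\stab{\BE}(Y,X) = \BE(Y,X)$ as abelian groups (and similarly for $\stab{\BF}$), I would simply set $\nat{\stab{\beta}}{Y,X} \coloneqq \nat{\beta}{Y,X}$. The compatibility to check is naturality in both slots. For morphism classes $[f]\colon X' \to X$ and $[g]\colon Y \to Y'$ in $\stab[\cat{I}]{\cat{A}}$, the induced action of $\stab{\BE}([g],[f])$ on $\stab{\BE}(Y,X)$ is computed by choosing any representatives $f,g$ and applying $\BE(g,f)$; this is independent of the choice because $\cat{I}$ consists of objects that are both $\BE$-projective and $\BE$-injective, so $\BE$ annihilates morphisms factoring through $\cat{I}$ in either slot. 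The analogous fact holds for $\stab{\BF}$ using $\sF(\cat{I}) \subseteq \cat{J}$. Naturality of $\stab{\beta}$ then reduces to naturality of $\beta$ applied to arbitrary lifts.

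Finally, for the extriangle compatibility, any stabilised extriangle $X \xrightarrow{[f]} W \xrightarrow{[g]} Y \xdashedrightarrow{d}$ is obtained from an honest extriangle $X \xrightarrow{f} W \xrightarrow{g} Y \xdashedrightarrow{d}$ in $\cat{A}$ by passing to morphism classes. Since $(\sF,\beta)$ is extriangulated, we get an extriangle
\[
\sF(X) \xrightarrow{\sF(f)} \sF(W) \xrightarrow{\sF(g)} \sF(Y) \xdashedrightarrow{\nat{\beta}{Y,X}(d)}
\]
in $\cat{B}$, which descends under the same recipe to a $\stab{\ft}$-extriangle in $\stab[\cat{J}]{\cat{B}}$ with connecting element $\nat{\stab{\beta}}{Y,X}(d) = \nat{\beta}{Y,X}(d)$. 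This is exactly $\stab{\sF}$ applied to the original stabilised extriangle.

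The proof is essentially a bookkeeping exercise; no step presents a genuine obstacle. The one place to be careful is the well-definedness of $\stab{\BE}$ and $\stab{\BF}$ on stabilised morphisms, since everything downstream rests on the fact that the two-sided ideals $[\cat{I}]$ and $[\cat{J}]$ act trivially on the respective extension bifunctors — which in turn relies crucially on the projective–injective hypothesis placed on $\cat{I}$ and $\cat{J}$.
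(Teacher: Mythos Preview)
Your proposal is correct and follows the same approach as the paper's proof, just with more detail spelled out. The paper compresses your argument into a few sentences: it notes $\sF([\cat{I}](X,Y)) \subseteq [\cat{J}](\sF(X),\sF(Y))$ to get the induced additive functor, invokes the projective--injective hypothesis for well-definedness of $\stab{\beta}$, and then appeals to the definitions of $\stab{\fs}$ and $\stab{\ft}$ for the extriangle condition---exactly the three steps you carry out.
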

\begin{proof}
By assumption we have $\sF([\cat{I}](X,Y)) \subseteq [\cat{J}](\sF(X),\sF(Y))$. Hence $\sF$ induces an additive functor $\stab{\sF} \colon \stab[\cat{I}]{\cat{A}} \to \stab[\cat{J}]{\cat{B}}$. For the natural transformation $\stab{\beta}$ is well-defined since $\cat{I}$ and $\cat{J}$ consist of projective and injective objects in the respective extriangulated structure. It now follows from the definition of $\stab{\fs}$ and $\stab{\ft}$ that $(\stab{\sF},\stab{\beta})$ is an extriangulated functor.
\end{proof}

\begin{proposition}
    \label{stabilisation_bifunctor}
Let $(\sF,\phi,\psi) \colon (\cat{A},\BE,\fs) \times (\cat{B},\BF,\ft) \to (\cat{C},\BG,\fu)$ be a biextriangulated functor. 
Let $\cat{I}$, $\cat{J}$ and $\cat{K}$ be full subcategories of $\cat{A}$, $\cat{B}$ and $\cat{C}$, respectively, that are closed under finite direct sums and consist of objects that are projective and injective with respect to the respective extriangulated structure. 

If $\sF(\cat{I},\cat{B}) \subseteq \cat{K}$ and $\sF(\cat{A},\cat{J}) \subseteq \cat{K}$, then $(\sF,\phi,\psi) $ induces a biextriangulated functor $(\stab{\sF},\stab{\phi},\stab{\psi}) \colon (\stab[\cat{I}]{\cat{A}},\stab{\BE},\stab{\fs}) \times (\stab[\cat{J}]{\cat{B}},\stab{\BF},\stab{\ft}) \to (\stab[\cat{K}]{\cat{C}},\stab{\BG},\stab{\fu})$. Moreover, if $(\sF,\phi,\psi) $ is strong, then so is the induced functor $(\stab{\sF},\stab{\phi},\stab{\psi})$. 
\end{proposition}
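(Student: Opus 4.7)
The plan is to proceed in three steps: first descend $\sF$ to a bifunctor on the product of stabilisations, then install the natural transformations by invoking \cref{stab_functor} in each component and check the extranaturality axioms by hand, and finally handle the strong condition via \cref{stab_higher_ext}.

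First I would show that $\sF$ descends to a bifunctor on the product of stabilisations. Using bifunctoriality, $\sF(f,g)=\sF(X',g)\circ\sF(f,Y)$, so it suffices to verify compatibility with the ideals in each variable separately. If $f\in[\cat{I}](X,X')$ factors through some $I\in\cat{I}$, then $\sF(f,Y)$ factors through $\sF(I,Y)\in\cat{K}$ by the hypothesis $\sF(\cat{I},\cat{B})\subseteq\cat{K}$, so $\sF(f,Y)\in[\cat{K}]$. Symmetrically, using $\sF(\cat{A},\cat{J})\subseteq\cat{K}$, one has $\sF(X,g)\in[\cat{K}]$ whenever $g\in[\cat{J}]$. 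Hence $\sF$ induces a bifunctor $\stab{\sF}\colon \stab[\cat{I}]{\cat{A}}\times\stab[\cat{J}]{\cat{B}}\to\stab[\cat{K}]{\cat{C}}$.

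Next, for each fixed $Y\in\cat{B}$ I would apply \cref{stab_functor} to the extriangulated functor $(\sF(-,Y),\phi^Y)\colon \cat{A}\to\cat{C}$, producing the extriangulated natural transformation $\stab{\phi^Y}$ and thereby establishing \cref{biextri_extri_in_2}; symmetrically for $\stab{\psi^X}$, yielding \cref{biextri_extri_in_3}. For the extranaturality conditions \cref{biextri_extra_in_B} and \cref{biextri_extra_in_A}, observe that $\stab{\BE}(X',X)=\BE(X',X)$ and $\stab{\BG}=\BG$ on $\Hom$-sets, while $\nat{\stab{\phi^Y}}{X',X}=\nat{\phi^Y}{X',X}$ and $\nat{\stab{\psi^X}}{Y',Y}=\nat{\psi^X}{Y',Y}$ coincide with the originals. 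The required commuting squares in $\stab[\cat{K}]{\cat{C}}$ are then images of the commuting squares in $\cat{C}$ under the additive quotient functor $\cat{C}\to\stab[\cat{K}]{\cat{C}}$, and hence commute. This establishes the first assertion.

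Finally, assume $(\sF,\phi,\psi)$ is strong. By \cref{stab_higher_ext}, higher extensions in $(\stab[\cat{K}]{\cat{C}},\stab{\BG},\stab{\fu})$ are well-defined via the same construction used for $\BG$, and $\stab{\BG}^n=\BG^n$ for $n\geq 1$. Inspecting either \cref{cup_product_coend} or \cref{cup_product_cosyz} shows that the cup product on $\stab{\BG}^{*}$ coincides with that on $\BG^{*}$. Consequently the strong identity \cref{eq:strong_extri_bifunctor} for $(\stab{\sF},\stab{\phi},\stab{\psi})$ is literally the identity for $(\sF,\phi,\psi)$, which holds by assumption. The only real subtlety, and where I would be most careful in the write-up, is this identification of cup products: it follows from the fact that the coend and (co)syzygy constructions of higher extensions are intrinsic to the extriangulated structure, so are transported unchanged by \cref{stab_higher_ext}.
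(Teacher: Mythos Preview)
Your proposal is correct and follows essentially the same approach as the paper: the paper's proof consists of the single line ``The first claim follows from \cref{stab_functor}, and the second claim follows from \cref{stab_higher_ext},'' and your three steps simply unpack this, applying \cref{stab_functor} componentwise and then \cref{stab_higher_ext} for strongness. Your additional care in verifying that $\sF$ descends as a bifunctor and that the extranaturality squares survive the quotient is reasonable detail that the paper leaves implicit.
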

\begin{proof}
The first claim follows from \cref{stab_functor}, and the second claim follows from \cref{stab_higher_ext}.
\end{proof}

Stabilisation of an extriangulated category is motivated by the stabilisation of Frobenius exact categories. An exact category $\cat{E}$ is \emph{Frobenius}, if it has enough projectives and enough injectives and the projectives and injectives coincide. In this situation, the stabilisation $\stab{\cat{E}} = \stab[\cat{I}]{\cat{\cat{E}}}$ with respect to the subcategory of projective-injective objects $\cat{I}$ is triangulated; see \cite[I.2]{Happel:1988}. As every biexact functor is strong when viewed as a biextriangulated functor, such a functor always induces a strong biextriangulated functor on any of its stabilisations by \Cref{stabilisation_bifunctor}. 

\begin{example} \label{example-matrix-factorisation}
Let $S$ be a regular local ring. A \emph{matrix factorisation of $f \in S$} is a pair of maps of free modules $(\Phi \colon F \to G, \Psi \colon G \to F)$ such that
\begin{equation*}
\Phi \Psi = f \id_G \quad \text{and} \quad \Psi \Phi = f \id_F\,.
\end{equation*}
The free modules $F$ and $G$ have the same rank, and when they have rank $n$, we say $(\Phi,\Psi)$ has \emph{size $n$}. Matrix factorisations were introduced in \cite[Section~5]{Eisenbud:1980}; also see \cite[Chapter~7]{Yoshino:1990}. We denote by $\mf[S]{f}$ the category of matrix factorisations of $f$. Then $\mf[S]{f}$ is Frobenius and the only indecomposable projective-injective objects are $(\id_S,f \id_S)$ and $(f \id_S,\id_S)$. The stabilisation $\smf[S]{f}$ is  equivalent as a triangulated category to the stablisation of the category of maximal Cohen--Macaulay modules over $S/(f)$; see \cite[Section~6]{Eisenbud:1980} and \cite[Theorem~7.4]{Yoshino:1990}. 

Suppose $K$ is a field and that $S = K\llbracket x_1,\ldots,x_s \rrbracket$ and $T = K\llbracket y_1, \ldots, y_t \rrbracket$ are power series rings. We fix $f \in S$ and $g \in T$. The tensor product of $(\Phi,\Psi) \in \mf[S]{f}$ and $(\Phi',\Psi')\in \mf[T]{g}$ is defined as
\[
(\Phi,\Psi) \mathrel{\hat{\otimes}} (\Phi',\Psi') \coloneqq
\left( 
\begin{psmallmatrix}
\Phi \otimes \id & \id \otimes \Phi' \\
-\id \otimes \Psi' & \Psi \otimes \id
\end{psmallmatrix}
,
\begin{psmallmatrix}
\Psi \otimes \id & -\id \otimes \Phi' \\
\id \otimes \Psi' & \Phi \otimes \id
\end{psmallmatrix}
\right)\,;
\]
see \cite[Definition~2.1]{Yoshino:1998}. The tensor product is a matrix factorisation of $f+g$ over $S \otimes_K T$, and this yields a functor
\begin{equation*}
\mathrel{\hat{\otimes}} \colon \mf[S]{f} \times \mf[T]{g} \to \mf[S \otimes_K T]{f+g}\,.
\end{equation*}
By \cite[Lemmas~2.2, 2.8]{Yoshino:1998}, this functor is biexact, and, by \cite[Lemma~2.3]{Yoshino:1998}, it induces a bitriangulated functor on the stabilisations. However, as
\begin{equation*}
(\Phi,\Psi) \mathrel{\hat{\otimes}} (\id_{T},g \id_{T}) \cong (\id_{S\otimes_{K}T},(f+g) \id_{S\otimes_{K}T}) \oplus ((f+g) \id_{S\otimes_{K}T},\id_{S\otimes_{K}T})\,,
\end{equation*}
the tensor product does not induce a functor on the intermediate stabilisations. In particular, it does not induce a tensor product on the category $\smf[S]{f}_\cat{I}$ for $\cat{I} = \{(\id,f \id)\}$, which is equivalent to the category of maximal Cohen--Macaulay modules over $S/(f)$; see \cite[Theorem~7.4]{Yoshino:1990}. 

There is another tensor product, called the multiplicative tensor product, defined in \cite[Definition~2.6]{Fomatati:2022}. While it is a biexact functor, it does not respect the projective-injective objects, and hence does not induce a functor on the stabilisation. 
\end{example}

%%%%%%%%%%%%%%%%%%%%%%%%%%%%%%%%%%%%%%%%%%%%%%%%%%%%%%%%%%%
%%%%%%%%%%%%%%%%%%%%%%%%%%%%%%%%%%%%%%%%%%%%%%%%%%%%%%%%%%%
\section{Tensor extriangulated categories}
\label{sec-tensor-extriangulated}

In this section, we introduce tensor extriangulated categories, generalising the notion of a tensor triangulated category; cf.\@ \cite[Appendix~A.2]{Hovey/Palmieri/Strickland:1997}.

%%%%%%%%%%%%%%%%%%%%%%%%%%%%%%%%%%%%%%%%%%%%%%%%%%%%%%%%%%%
\subsection{Monoidal categories}

Recall that a \emph{monoidal category} $(\cat{A},\otimes,\unit)$ is a category $\cat{A}$ together with a functor
\begin{equation*}
\otimes \colon \cat{A} \times \cat{A} \to \cat{A},
\end{equation*}
called the \emph{tensor product}, and a \emph{unit} object $\unit \in \cat{A}$ equipped with isomorphisms
\begin{gather*}
\nat{\alpha}{X,Y,Z} \colon X \otimes (Y \otimes Z) \overset{\cong}{\longrightarrow} (X \otimes Y) \otimes Z \,, \\
\nat{\lambda}{X} \colon \unit \otimes X \overset{\cong}{\longrightarrow} X \quad\text{and}\quad \nat{\rho}{X} \colon X \otimes \unit \overset{\cong}{\longrightarrow} X
\end{gather*}
that are natural in each argument, satisfying some coherence axioms. A monoidal category is \emph{symmetric} if, in addition, there exists an isomorphism
\begin{equation*}
\nat{\sigma}{X,Y} \colon X \otimes Y \overset{\cong}{\longrightarrow} Y \otimes X\,,
\end{equation*}
also natural in each argument, satisfying some coherence axioms. For a detailed definition see \cite[XI.1]{MacLane:1998}, and \cite{Kelly:1964} for minimally sufficient coherent axioms. 

%%%%%%%%%%%%%%%%%%%%%%%%%%%%%%%%%%%%%%%%%%%%%%%%%%%%%%%%%%%
\subsection{Tensor extriangulated categories}

We now have all the ingredients to define a monoidal extriangulated category. Roughly speaking, it is an extriangulated category with a monoidal structure that respects the extriangulated structure. Explicitly, this means:

\begin{definition}
\label{definition-tensor-extriangulated-category}
We call a tuple 
$(\cat{A},\BE,\fs, \otimes, \unit)$
a \emph{monoidal extriangulated category} if:
\begin{enumerate}
\item \label{item:TE-category-monoidal} $(\cat{A},\otimes, \unit)$ is a monoidal category;
\item \label{item:TE-category-extriangulated} $(\cat{A},\BE,\fs)$ is an extriangulated category; 
\item \label{item:TE-category-tensor} $\otimes = (-\otimes-,\phi,\psi)\colon (\cat{A},\BE,\fs)\times (\cat{A},\BE,\fs)\to (\cat{A},\BE,\fs)$ is a biextriangulated functor; and 
\item \label{item:TE-category-alpha-lambda-rho-extriangulated} the natural isomorphisms $\alpha,\lambda, \rho$ from the monoidal structure are extriangulated natural transformations in each variable. 
\end{enumerate}
We say a monoidal extriangulated category $(\cat{A},\BE,\fs,\otimes,\unit)$ is \emph{strong}, if the biextriangulated functor $(- \otimes -,\phi,\psi)$ is strong (see \Cref{strong_extri_bifunctor}). 

A \emph{tensor extriangulated category} is a monoidal extriangulated category where the monoidal structure is symmetric and the corresponding natural transformation $\sigma$ is extriangulated in each variable. 
\end{definition}

Condition~\cref{item:TE-category-alpha-lambda-rho-extriangulated} has various consequences. For example, it yields
\begin{equation} \label{tec_phi_unit}
\nat{\phi^\unit}{X,Y} = (\nat{\rho^{-1}}{Y})_* \circ (\nat{\rho}{X})^* \quad \text{and} \quad \nat{\psi^\unit}{X,Y} = (\nat{\lambda^{-1}}{Y})_* \circ (\nat{\lambda}{X})^*\,.
\end{equation}
In particular, $\phi^\unit$ and $\psi^\unit$ are natural isomorphisms. Further, in a tensor extriangulated category, the natural transformations $\phi$ and $\psi$ from the tensor product determine each other; explicitly
\begin{equation*}
(\nat{\sigma}{X',Y})_* \circ \nat{\phi^Y}{X,X'} = (\nat{\sigma}{X,Y})^* \circ \nat{\psi^Y}{X,X'}
\end{equation*}
as $\sigma$ is an extriangulated natural isomorphism in each variable.

%%%%%%%%%%%%%%%%%%%%%%%%%%%%%%%%%%%%%%%%%%%%%%%%%%%%%%%%%%%%%
\subsection{Ring of higher extensions}

Recall from \cref{higher_ext_ring_bimod} that $\BE^*(X,X)$ with the cup product is a graded ring. Similarly as for triangulated categories, a tensor structure makes the graded endomorphism ring graded-commutative.

\begin{lemma} \label{extension_ring_commutative}
Let $(\cat{A},\BE,\fs, \otimes, \unit)$ be a strong monoidal extriangulated category. We assume that higher extensions are well-defined in $(\cat{A},\BE,\fs)$. Then $\BE^*(\unit,\unit)$ with the cup product is a graded-commutative graded ring. 
\end{lemma}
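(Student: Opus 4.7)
The plan is to deduce graded-commutativity from \cref{strong_bifunctor_higher_ext}, applied to the strong biextriangulated functor $(-\otimes-,\phi,\psi)\colon\cat{A}\times\cat{A}\to\cat{A}$ at the input $X=X'=Y=Y'=\unit$. For $d\in \BE^{i}(\unit,\unit)$ and $e\in \BE^{j}(\unit,\unit)$, this lemma produces the identity
\[
\nat{(\psi^\unit)^j}{\unit,\unit}(e)\smile \nat{(\phi^\unit)^i}{\unit,\unit}(d) \;=\; (-1)^{ij}\,\nat{(\phi^\unit)^i}{\unit,\unit}(d)\smile \nat{(\psi^\unit)^j}{\unit,\unit}(e)
\]
in $\BE^{i+j}(\unit\otimes\unit,\unit\otimes\unit)$. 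What remains is to transport this identity along the canonical isomorphism $\unit\otimes\unit\cong\unit$ back to $\BE^{*}(\unit,\unit)$.

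To set this up, write $\iota\coloneqq \rho_{\unit}=\lambda_{\unit}\colon \unit\otimes\unit\to\unit$; the equality $\rho_{\unit}=\lambda_{\unit}$ is a standard consequence of the monoidal coherence axioms and needs no extra hypothesis. By \cref{tec_phi_unit}, at $X=Y=\unit$ both $\nat{\phi^\unit}{\unit,\unit}$ and $\nat{\psi^\unit}{\unit,\unit}$ equal the conjugation $(\iota^{-1})_{*}\circ \iota^{*}$, which is an \emph{iso}morphism $\BE(\unit,\unit)\xrightarrow{\cong}\BE(\unit\otimes\unit,\unit\otimes\unit)$. The higher natural transformations $(\phi^\unit)^{i}$ and $(\psi^\unit)^{i}$ of \cref{extr_functor_higher_ext} are constructed functorially from their degree-one predecessors, and condition \cref{item:TE-category-alpha-lambda-rho-extriangulated} of \cref{definition-tensor-extriangulated-category} guarantees that $\rho$ and $\lambda$ exhibit $(-\otimes\unit,\phi^{\unit})$ and $(\unit\otimes-,\psi^{\unit})$ as extriangulated natural isomorphisms to the identity functor. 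Thus $\nat{(\phi^\unit)^i}{\unit,\unit}=\nat{(\psi^\unit)^i}{\unit,\unit}=(\iota^{-1})_{*}\circ\iota^{*}$ as isomorphisms $\BE^{i}(\unit,\unit)\xrightarrow{\cong}\BE^{i}(\unit\otimes\unit,\unit\otimes\unit)$ for each $i\geq 0$.

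Finally, I check that this isomorphism is a graded-ring homomorphism. Using \cref{higher_ext_coend_Hom} to read pre- and post-composition by a morphism as a cup product with a degree-$0$ class, and invoking associativity of $\smile$ together with $\iota\smile\iota^{-1}=\iota\circ\iota^{-1}=\id_{\unit}$, one computes
\[
\bigl((\iota^{-1})_{*}\iota^{*} d\bigr)\smile \bigl((\iota^{-1})_{*}\iota^{*} e\bigr) \;=\; \iota^{-1}\smile d\smile (\iota\smile \iota^{-1})\smile e\smile \iota \;=\; (\iota^{-1})_{*}\iota^{*}(d\smile e)
\]
and similarly for $e\smile d$. Substituting these into the sign identity and cancelling the isomorphism $(\iota^{-1})_{*}\circ\iota^{*}$ yields $e\smile d=(-1)^{ij}\,d\smile e$, which is graded-commutativity.

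I expect the main obstacle to be the second paragraph: pinning down that the higher transformations $(\phi^{\unit})^{i}$ and $(\psi^{\unit})^{i}$ both coincide with the ring isomorphism induced by $\iota$. This relies on the coherence identity $\rho_{\unit}=\lambda_{\unit}$ together with condition \cref{item:TE-category-alpha-lambda-rho-extriangulated} being propagated through the inductive construction of higher extensions; once this identification is established, everything else is either the content of \cref{strong_bifunctor_higher_ext} or a purely formal calculation in the graded ring of $\unit\otimes\unit$.
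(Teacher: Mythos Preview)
Your proposal is correct and follows essentially the same route as the paper: invoke \cref{strong_bifunctor_higher_ext} at $X=X'=Y=Y'=\unit$, then use \cref{tec_phi_unit} (in a higher-degree form) to identify $(\phi^{\unit})^{i}$ and $(\psi^{\unit})^{j}$ with conjugation by the unitor, so that the sign identity transports to $\BE^{*}(\unit,\unit)$. The paper compresses your second and third paragraphs into the single phrase ``we implicitly use a higher version of \cref{tec_phi_unit}''; you have spelled out exactly what that higher version is (namely $(\phi^{\unit})^{i}_{\unit,\unit}=(\psi^{\unit})^{i}_{\unit,\unit}=(\iota^{-1})_{*}\iota^{*}$, using $\rho_{\unit}=\lambda_{\unit}$) and why conjugation by $\iota$ is multiplicative, which is indeed the content hidden behind that phrase.
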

\begin{proof}
Let $d \in \BE^i(\unit,\unit)$ and $e \in \BE^j(\unit,\unit)$. Then
\begin{equation*}
\begin{aligned}
d \smile e &= \nat{(\phi^\unit)^i}{\unit,\unit}(d) \smile \nat{(\psi^\unit)^j}{\unit,\unit}(e) \\
&= (-1)^{ij} \nat{(\psi^\unit)^j}{\unit,\unit}(e) \smile \nat{(\phi^\unit)^i}{\unit,\unit}(d) \\
&= (-1)^{ij} e \smile d
\nospacepunct{.}
\end{aligned}
\end{equation*}
For the first and third equalities we implicitly use a higher version of \cref{tec_phi_unit}. 
The second equality follows from \cref{strong_bifunctor_higher_ext}. 
\end{proof}

\begin{example} \label{example:sanity-check}
In \cref{example_triangulated} we explained that any triangulated category is naturally an extriangulated category. By \cref{lem:biexact-implies-biextriangulated} a strong biextriangulated functor on a triangulated category is a bitriangulated functor. Hence the definition of a strong tensor extriangulated category recovers the notion of a tensor triangulated category in the sense of \cite[A.2]{Hovey/Palmieri/Strickland:1997}. Note, that other sources use a weaker definition of a tensor triangulated category, they might not assume that the monoidal structure is strong or the natural transformations $\alpha$, $\rho$, $\lambda$ and $\sigma$ are extriangulated.

The extension ring $\BE^*(\unit,\unit)$ of a tensor triangulated category is precisely the non-negatively graded part of the endomorphism ring $\End[*]{\cat{T}}{\unit}$. 
\end{example}

\begin{remark}
In an essentially small extriangulated category negative extensions were defined in \cite[Definition~5.1]{Gorsky/Nakaoka/Palu:2021}. For a triangulated category these yield the negative part of the endomorphism ring $\End[*]{\cat{T}}{\unit}$; see \cite[Proposition~5.4]{Gorsky/Nakaoka/Palu:2021}. We expect that \cref{extension_ring_commutative} extends to $\BE^*(X,X) = \{\BE^n(X,X)\}_{n \in \BZ}$.
\end{remark}

We will see more examples of (strong) tensor extriangulated categories in \cref{sec:example_tec}. 

%%%%%%%%%%%%%%%%%%%%%%%%%%%%%%%%%%%%%%%%%%%%%%%%%%%%%%%%%%%
\subsection{Closed tensor extriangulated categories}

A symmetric monoidal category $(\cat{A},\otimes,\unit)$ is \emph{closed} in the sense of \cite[p.~119]{Lewis/May/Steinberger:1986} if there is an \emph{internal hom}
\begin{equation*}
\hom{-}{-} \colon \op{\cat{A}} \times \cat{A} \to \cat{A}\,,
\end{equation*}
meaning a bifunctor such that, for every $X \in \cat{A}$, the functor $\hom{X}{-}$ is right adjoint to $- \otimes X$.

\begin{definition}
\label{definition:closed}
We say a tensor extriangulated category $(\cat{A},\BE,\fs,\otimes,\unit)$ is \emph{closed}, if for every $X \in \cat{A}$ the functor $- \otimes X$ has an extriangulated right adjoint $\hom{X}{-}$, and $\hom{-}{-}$ is a biextriangulated functor. 
\end{definition}

We do not know whether the internal hom functor of a closed and strong tensor extriangulated category is strong. 

%%%%%%%%%%%%%%%%%%%%%%%%%%%%%%%%%%%%%%%%%%%%%%%%%%%%%%%%%%%
\subsection{Tensor extriangulated functors}

The following definition  generalises the notion of a  tensor triangulated functor  from \cite[Definition~3]{Balmer:2010}. 

\begin{definition}
A \emph{tensor extriangulated functor} 
\begin{equation*}
(\sF,\beta) \colon (\cat{A},\otimes_\cat{A},\unit_\cat{A}) \to (\cat{B},\otimes_\cat{B},\unit_\cat{B})
\end{equation*}
of tensor extriangulated categories consists of: 
\begin{enumerate}
\item an extriangulated functor $(\sF,\beta) \colon (\cat{A},\BE,\fs) \to (\cat{B},\BF,\ft)$; 
\item an isomorphism $u \colon \sF(\unit_\cat{A}) \to \unit_\cat{B}$; and
\item a natural isomorphism $\nat{\mu}{X,Y} \colon \sF(X \otimes_\cat{A} Y) \to \sF(X) \otimes_\cat{B} \sF(Y)$
\end{enumerate}
such that the following diagrams commute:
\begin{equation*}
\begin{gathered}
\begin{tikzcd}
\sF(\unit_\cat{A} \otimes_{\cat{A}} X) \ar[r,"{\nat{\mu}{\unit_\cat{A},X}}"] \ar[d,"{\sF(\nat{\lambda}{X})}"] & \sF(\unit_\cat{A}) \otimes_\cat{B} \sF(X) \ar[d,"{u \otimes_\cat{B} \sF(X)}"] \\
\sF(X) & \unit_\cat{B} \otimes_\cat{B} \sF(X) \ar[l,"\nat{\lambda}{\sF(X)}"]
\end{tikzcd}
\quad \text{and} \\
\begin{tikzcd}
\BE(X,X') \ar[r,"{\nat{\beta}{X,X'}}"] \ar[d,"{\nat{\phi^Y}{X,X'}}"] & \BF(\sF(X),\sF(X')) \ar[d,"{\nat{\phi^{\sF(Y)}}{\sF(X),\sF(X')}}"] \\
\BE(X \otimes_\cat{A} Y,X' \otimes_\cat{A} Y) \ar[d,"{\nat{\beta}{X \otimes_\cat{A} Y,X' \otimes_\cat{A} Y}}"] & \BF(\sF(X) \otimes_\cat{B} \sF(Y), \sF(X') \otimes_\cat{B} \sF(Y)) \ar[d,"(\nat{\mu}{X,Y})^*"] \\
\BF(\sF(X \otimes_\cat{A} Y),\sF(X' \otimes_\cat{A} Y)) \ar[r,"{(\nat{\mu}{X',Y})_{*}}"] & \BF(\sF(X \otimes_\cat{A} Y),\sF(X') \otimes_\cat{B} \sF(Y)) \nospacepunct{.}
\end{tikzcd}
\end{gathered}
\end{equation*}
\end{definition}

%%%%%%%%%%%%%%%%%%%%%%%%%%%%%%%%%%%%%%%%%%%%%%%%%%%%%%%%%%%
%%%%%%%%%%%%%%%%%%%%%%%%%%%%%%%%%%%%%%%%%%%%%%%%%%%%%%%%%%%
\section{Examples of tensor extriangulated categories} \label{sec:example_tec}

By construction the notion of a tensor extriangulated category generalises that of a tensor triangulated category. In comparison, there seem to be multiple different notions  of a tensor exact category. Often the monoidal structure on an exact or abelian category is left exact, but not necessarily exact. In \cref{example-flat-functors} we defined a biexact functor by restricting the tensor product to flat functors. 
In \Cref{example_flat_bifunctors} we obtain a monoidal structure, which is not symmetric, by extending the tensor product to bimodules. 

\begin{example} \label{example_flat_bifunctors}
Let $R$ be a commutative ring and $\cat{X}$ a small $R$-linear category. Let $\biMod{\cat{X}}{\cat{X}}$ be the category of $R$-bilinear functors $\op{\cat{X}}\times \cat{X}\to \Mod{R}$. We extend the functor $\otimes_\cat{X}$ of \cref{example-flat-functors} to a functor
\begin{equation*}
\begin{gathered}
\otimes_\cat{X} \colon \biMod{\cat{X}}{\cat{X}}\times \biMod{\cat{X}}{\cat{X}}\to \biMod{\cat{X}}{\cat{X}} \,, \\
(\sF,\sG) \mapsto \left((X,Y) \mapsto \int^W \sF(W,Y) \otimes_R \sG(X,W)\right)\,.
\end{gathered}
\end{equation*}
It is straightforward to check that $\biMod{\cat{X}}{\cat{X}}$ equipped with $\otimes_{\cat{X}}$ is a monoidal category and $\Hom{\cat{X}}{-}{-}$ serves as the monoidal unit; see \cite[Proposition~1]{ElKaoutit:2020}. Now consider the subcategory $\Flat{\cat{X},\cat{X}}$ of $\biMod{\cat{X}}{\cat{X}}$ consisting of $R$-bilinear functors $\sF$ such that $- \otimes_{\cat{X}} \sF$ and $\sF \otimes_\cat{X} -$ are exact. Then $\Flat{\cat{X},\cat{X}}$ equipped with $\otimes_\cat{X}$ and $\Hom{\cat{X}}{-}{-}$ is a monoidal extriangulated category. 

When $A$ is the functor ring of $\cat{X}$ (see \cref{example-flat-functors}), then there is an equivalence of monoidal categories  $\biMod{\cat{X}}{\cat{X}}\to \Mod{\op{A} \otimes_R A}$ under which $\otimes_{\cat{X}}$ corresponds to $\otimes_A$; see \cite[Proposition~2]{ElKaoutit:2020}. 
% We have not defined monoidal extriangulated functors. Maybe we should, but at the moment we cannot say there is an equivalence of monoidal extriangulated categories.
\end{example}

The monoidal structure in the previous \namecref{example_flat_bifunctors} is generally not symmetric. One obtains a symmetric monoidal structure for commutative rings:

\begin{example}
\label{example-local-units-flat}
Let $R$ be a commutative ring \emph{with local units}; this means that for any $r_{1},\dots,r_{n}\in R$, there exists an idempotent $e\in R$ such that $er_{i}=r_{i}=r_{i}e$ for each $i$. The standard tensor product $\otimes_R$ is a symmetric monoidal structure on the category of unital $R$-modules $\Mod{R}$; an $R$-module $M$ is \emph{unital} provided $M=RM$. However, the tensor product is generally only left exact. An $R$-module $F$ is \emph{flat}, if $F \otimes_R -$ is exact. The subcategory of flat $R$-modules, denoted $\Flat{R}$, is an extension-closed subcategory of $\Mod{R}$, and hence inherits an exact structure. Hence $(\Flat{R},\otimes_R,R,\operatorname{Ext}_R^1)$ is a tensor extriangulated category. The ring of extensions of the unit $R$ is $\BE^*(R,R) = \Ext[*]{R}{R}{R} = R$. 
\end{example}

A big class of extriangulated categories, that are neither triangulated or exact, are the extension-closed subcategories of triangulated categories; see \cite[Remark~2.18]{Nakaoka/Palu:2019}. In general, extension-closed subcategories of a tensor triangulated category need not inherit the monoidal structure. However, in some situations we obtain a tensor extriangulated category. 

\begin{example}
\label{example-derived-cat-t-structures}
Let $R$ be a commutative ring. The derived category $\dcat{\Mod{R}}$ of $R$-modules is a tensor triangulated category with tensor product $\lotimes_R$ and unit $R$. The standard t-structure $(\cat{U},\cat{V})$ on $\dcat{\Mod{R}}$ is given by 
\begin{equation*}
\begin{gathered}
\cat{U} \coloneqq \set{X \in \dcat{\Mod{R}} | \hh[<0]{X} = 0} \quad \text{and} \\
\cat{V} \coloneqq \set{X \in \dcat{\Mod{R}} | \hh[>0]{X} = 0}\,,
\end{gathered}
\end{equation*}
the subcategories of complexes with homology concentrated in non-negative and non-positive  degrees, respectively. As $\cat{U}$ and $\cat{V}$ are extension-closed subcategories of $\dcat{\Mod{R}}$, they inherit an extriangulated structure. Further, the aisle $\cat{U}$ is closed under the tensor product; that is $\cat{U} \lotimes_R \cat{U} \subseteq \cat{U}$ and $R \in \cat{U}$. Hence $(\cat{U},\lotimes_R,R)$ is a tensor extriangulated category. 

One gets further examples by taking the intersection of $\cat{U}$ with the full subcategory of right-bounded  complexes. When $R$ is noetherian, then the intersection of $\cat{U}$ with the right-bounded derived category of finitely generated modules yields a tensor extriangulated category as well. 
\end{example}

%%%%%%%%%%%%%%%%%%%%%%%%%%%%%%%%%%%%%%%%%%%%%%%%%%%%%%%%%%%
\subsection{Stabilisation of extriangulated categories} \label{subsec:stabilisation_ttc}

In \cref{subsec:stabilisation} we gave conditions when a (strong) biextriangulated bifunctor induces a (strong) biextriangulated bifunctor on the stabilisation with respect to a subcategory of projective-injective objects. We now focus on monoidal extriangulated categories. We say an extension-closed subcategory $\cat{I}$ of a monoidal extriangulated category $(\cat{A},\otimes,\unit)$ is a \emph{two-sided tensor ideal}, if $\cat{A} \otimes \cat{I} \subseteq \cat{I}$ and $\cat{I} \otimes \cat{A} \subseteq \cat{I}$. Then the following result is a direct consequence of \cref{stabilisation_bifunctor}:

\begin{lemma}
\label{lemma:stabilisation_extriangulated}
Let $(\cat{A},\otimes,\unit)$ be a monoidal extriangulated category. If $\cat{I}$ is a two-sided tensor ideal consisting of objects that are $\BE$-projective and $\BE$-injective, then $\underline{\cat{A}}_\cat{I}$ inherits the monoidal structure form $\cat{A}$. In particular, $(\underline{\cat{A}}_\cat{I},\underline{\otimes},\unit)$ is a monoidal extriangulated category. When $(\cat{A},\otimes,\unit)$ is tensor extriangulated then so is $(\underline{\cat{A}}_\cat{I},\underline{\otimes},\unit)$. When $(\cat{A},\otimes,\unit)$ is strong, then so is $(\underline{\cat{A}}_\cat{I},\underline{\otimes},\unit)$. \qed
\end{lemma}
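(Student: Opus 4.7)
The plan is to verify each clause of \cref{definition-tensor-extriangulated-category} for the tuple $(\underline{\cat{A}}_\cat{I},\stab{\BE},\stab{\fs},\underline{\otimes},\unit)$, with the main work packaged by \cref{stabilisation_bifunctor}. First I would apply \cref{stabilisation_bifunctor} to the biextriangulated functor $\otimes\colon (\cat{A},\BE,\fs)\times (\cat{A},\BE,\fs)\to (\cat{A},\BE,\fs)$, taking $\cat{I}=\cat{J}=\cat{K}$. The hypothesis that $\cat{I}$ is a two-sided tensor ideal translates exactly to the containments $\otimes(\cat{I},\cat{A})\subseteq \cat{I}$ and $\otimes(\cat{A},\cat{I})\subseteq \cat{I}$, so the proposition yields an induced biextriangulated functor $\underline{\otimes}$ on the stabilisation, which is strong whenever $\otimes$ is. This settles \cref{item:TE-category-tensor}, and \cref{item:TE-category-extriangulated} is built into the construction of $(\stab{\BE},\stab{\fs})$ recalled before \cref{stab_higher_ext}.

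For \cref{item:TE-category-monoidal} I would observe that $\unit$ persists as an object of $\underline{\cat{A}}_\cat{I}$, and that the coherence isomorphisms $\alpha,\lambda,\rho$—and $\sigma$ in the symmetric case—descend by taking residue classes of their components. Since the canonical functor $\cat{A}\to\underline{\cat{A}}_\cat{I}$ is additive and the identity on objects, a natural isomorphism between composable functors on $\cat{A}$ induces a natural isomorphism between the corresponding functors on $\underline{\cat{A}}_\cat{I}$. The pentagon, triangle, and hexagon coherence diagrams are equations of morphisms in $\cat{A}$, and hence pass to the quotient without incident.

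The one point requiring care is \cref{item:TE-category-alpha-lambda-rho-extriangulated}: that each coherence isomorphism remains extriangulated with respect to $\stab{\BE}$. By \cref{nat_trans_cup_product}, this reduces to the equality of two cup products against an element of $\stab{\BE}$. Since $\stab{\BE}(Y,X)=\BE(Y,X)$ as sets, and $[f]\smile d$ in the stabilisation is computed as $f\smile d$ in $\cat{A}$, the relevant commutative square for each of $\alpha,\lambda,\rho,\sigma$ holds downstairs whenever it holds upstairs, which is precisely the hypothesis on $\cat{A}$. For the strong case, \cref{stab_higher_ext} gives $\stab{\BE}^2(X,Y)=\BE^2(X,Y)$, so the identity \eqref{eq:strong_extri_bifunctor} for $\underline{\otimes}$ coincides with the one for $\otimes$. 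The main obstacle, in fact quite mild, is thus purely bookkeeping: confirming that passage to $\underline{\cat{A}}_\cat{I}$ does not disturb the cup product or its $\BE^2$-valued compatibility, which is already encoded in \cref{stabilisation_bifunctor,stab_higher_ext}.
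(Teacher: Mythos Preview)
Your proposal is correct and takes essentially the same approach as the paper: the paper states the lemma as a direct consequence of \cref{stabilisation_bifunctor} and gives no further proof (note the \qedsymbol{} immediately after the statement). You have simply spelled out the verification of \cref{item:TE-category-monoidal} and \cref{item:TE-category-alpha-lambda-rho-extriangulated} that the paper leaves implicit, and your handling of these via the identification $\stab{\BE}=\BE$ and \cref{stab_higher_ext} is exactly what is needed.
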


\begin{lemma}
\label{lemma-closed-tensor-implies-projs-and-injs-ideal}
    Let $(\cat{A},\BE,\fs,\otimes,\unit)$ be a closed tensor extriangulated category. 
    %(as in \cref{definition:closed}). 
    %
    Then the $\BE$-projectives form a tensor ideal. 
\end{lemma}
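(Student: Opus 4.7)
The strategy is to exploit the extriangulated adjunction $(-\otimes X) \dashv \hom{X}{-}$ supplied by the closed structure. I would fix an $\BE$-projective object $P$ and an arbitrary $X \in \cat{A}$, with the aim of verifying that $P \otimes X$ is again $\BE$-projective, i.e.\ that $\BE(P \otimes X, Y) = 0$ for every $Y \in \cat{A}$. By \cref{definition:closed}, $\hom{X}{-}$ is an extriangulated right adjoint to $-\otimes X$, so \cref{lemma-extriangulated-adjoints} yields a natural isomorphism
\begin{equation*}
\BE(P \otimes X, Y) \cong \BE(P, \hom{X}{Y})\,.
\end{equation*}
Since $P$ is $\BE$-projective, the right-hand side vanishes, which gives the vanishing of the left-hand side for all $Y$ and hence the $\BE$-projectivity of $P \otimes X$.

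For the two-sided condition, I would invoke the symmetry isomorphism $\nat{\sigma}{X,P}\colon X \otimes P \xrightarrow{\cong} P \otimes X$ to transport $\BE$-projectivity to $X \otimes P$. Together with the fact that the $\BE$-projectives form an extension-closed subcategory (a standard consequence of the six-term exact sequence in $\BE$ attached to an extriangle, which follows from \cite[Corollary~3.12]{Nakaoka/Palu:2019}), this shows that the class of $\BE$-projectives is a tensor ideal in the sense introduced before \cref{lemma:stabilisation_extriangulated}. The only substantive step is the adjunction isomorphism of \cref{lemma-extriangulated-adjoints}, and I do not anticipate any real obstacle: the argument parallels the classical one from the tensor triangulated setting, where the existence of a right adjoint to $-\otimes X$ immediately forces stability of the projectives under tensoring.
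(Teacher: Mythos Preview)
Your proposal is correct and matches the paper's own proof essentially line for line: both use the extriangulated adjunction isomorphism from \cref{lemma-extriangulated-adjoints} to get $\BE(P\otimes X,-)\cong\BE(P,\hom{X}{-})=0$, and then invoke symmetry of $\otimes$ for the other side. Your additional remark about extension-closedness of the $\BE$-projectives is not in the paper's proof (the paper uses ``tensor ideal'' here only in the sense of being closed under tensoring), but it is harmless and indeed appropriate if one reads the term against the two-sided tensor ideal definition given just before \cref{lemma:stabilisation_extriangulated}.
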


\begin{proof}
    Let $X,Y\in \cat{A}$. 
    We are assuming $-\otimes X$ has an extriangulated right adjoint $\hom{X}{-}$. 
    By \cref{lemma-extriangulated-adjoints} this means $\BE(Y \otimes X,-)\cong \BE(Y,\hom{X}{-})$. 
    To say that an object $Z$ is $\BE$-projective is equivalent to saying that  $\BE(Z,-)=0$. 
    Altogether we have shown that if $Y$ is $\BE$-projective then so is  $Y \otimes X$. 
    Since we are working in a tensor extriangulated category, the tensor product is symmetric, and so we have shown that the $\BE$-projectives form a tensor ideal. 
\end{proof}

\begin{example} \label{example_Hopf}
Let $H$ be a cocommutative Hopf algebra over a field $k$. For $H$-modules $M$ and $N$ the comultiplication induces an $H$-module structure on $M \otimes_k N$. By \cite[Chapter~12, Proposition~3]{Margolis:1983}, the tensor product is an exact, symmetric monoidal structure on $\Mod{H}$. Hence $(\Mod{H},\otimes_k,k,\operatorname{Ext}_H^1)$ is a tensor extriangulated category. 
% Alternate citation for monoidal structure: \cite[Proposition~III.5.1]{Kassel:1995}

Further, a cocommutative Hopf algebra $H$ over a field $k$ is a Frobenius algebra by \cite{Larson/Sweedler:1969}. Hence $\Mod{H}$, as well as the full subcategory of finite-dimensional $H$-modules $\mod{H}$, is Frobenius exact. It is well-known that $H \otimes_k M$ is free for every $H$-module $M$; see for example \cite[Chapter~12, Proposition~4]{Margolis:1983}. Hence, every non-trivial tensor ideal of $\Mod{H}$ contains all projective objects. In particular, there are exactly two tensor ideals of $\Mod{H}$ consisting of projective-injective objects, namely the zero subcategory and the full subcategory of projective-injective objects. Hence, the strong tensor extriangulated category $\Mod{H}$ induces one other strong tensor extriangulated category via stabilisation, namely the tensor triangulated category $\sMod{H}$. 

A special case of a finite-dimensional cocommutative Hopf algebra is a group algebra $kG$ for a finite group $G$. 
\end{example}

%%%%%%%%%%%%%%%%%%%%%%%%%%%%%%%%%%%%%%%%%%%%%%%%%%%%%%%%%%%
\subsection{Dualisable objects}

\cref{example_Hopf} is an instance of an abelian \emph{rigid} monoidal category, that is an abelian monoidal category in which all objects are dualisable; see \cite[p.~254,~Examples]{Ulbrich:1990}. We show that the existence of dualisable objects makes the category Frobenius. 

Let $(\cat{A},\otimes,\unit)$ be a closed monoidal category. The \emph{dualisation functor} is given by
\begin{equation*}
(-)^\vee \coloneqq \hom{-}{\unit} \colon \op{\cat{A}} \to \cat{A}\,.
\end{equation*}
An object $X$ in $\cat{A}$ is called \emph{strongly dualisable}, if the natural transformation
\begin{equation*}
X^\vee \otimes Y \to \hom{X}{Y}
\end{equation*}
is an isomorphism; see \cite[Chapter~III, \S1]{Lewis/May/Steinberger:1986}. When the monoidal product is symmetric, then there exists a natural transformation $X \to (X^\vee)^\vee$. This is an isomorphism if $X$ is strongly dualisable by \cite[Proposition~1.3]{Lewis/May/Steinberger:1986}. 
Note that if $(\cat{A},\BE,\fs,\otimes,\unit)$ is closed tensor extriangulated and if $X$ is strongly dualisable then the natural isomorphisms $X^\vee \otimes - \to \hom{X}{-}$ and $X\otimes -\to (X^{\vee})^{\vee}\otimes -$ are compositions of extriangulated natural transformations, and hence extriangulated. 

% Recall that if  $V$ is strongly dualisable then so is  $V^{\vee}$ and the adjunction gives isomorphisms $\Hom{\cat{A}}{U\otimes V}{W}\to \Hom{\cat{A}}{U}{W\otimes V^{\vee}}$ sending $g\colon U\otimes V\to W$ to the map $U\to W\otimes V^{\vee}$ defined by composition of the natural isomorphism $U\cong  U\otimes \unit$, the image $U\otimes \unit \to U\otimes V\otimes V^{\vee}$ of the \emph{coevaluation map} $\unit \to V\otimes V^{\vee}$ under $U\otimes -$ from \cite[Definition~1.1]{Lewis/May/Steinberger:1986}, and $g\otimes V^{\vee}$; see \cite[Proposition~1.2]{Lewis/May/Steinberger:1986}.

\begin{proposition}
    \label{prop:closed-with-str-dual-objs-make-Frob}
    Let $(\cat{A},\BE,\fs,\otimes,\unit)$ be a closed tensor extriangulated category.
    If each object is strongly dualisable, then $\BE$-projectives are the same as $\BE$-injectives.  
\end{proposition}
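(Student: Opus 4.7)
The plan is to combine two observations: (a) the duality $(-)^{\vee}$ swaps the classes of $\BE$-projectives and $\BE$-injectives, and (b) the class of $\BE$-projectives is closed under $(-)^{\vee}$. Together these give the equality. If $P$ is $\BE$-projective, then (b) makes $P^{\vee}$ also $\BE$-projective, and (a) then makes $P\cong P^{\vee\vee}$ $\BE$-injective. Conversely, if $I$ is $\BE$-injective, then (a) makes $I^{\vee}$ $\BE$-projective, and (b) makes $I\cong I^{\vee\vee}$ $\BE$-projective.

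To establish (a), I would upgrade the closed structure to an extriangulated adjunction $-\otimes X \dashv -\otimes X^{\vee}$, which is valid because strong dualisability makes the isomorphism $\hom{X}{-}\cong -\otimes X^{\vee}$ extriangulated. Applying \cref{lemma-extriangulated-adjoints} with the left-hand component set to $\unit$ gives
\[
\BE(X,B)\cong \BE(\unit, X^{\vee}\otimes B)\,,
\]
and the same lemma applied with left-hand component $A$ (using the adjunction $-\otimes A\dashv -\otimes A^{\vee}$) yields $\BE(A,X^{\vee})\cong \BE(\unit, X^{\vee}\otimes A^{\vee})$. Comparing these two identifications, and using the essential surjectivity of $(-)^{\vee}$ (from $X^{\vee\vee}\cong X$), gives that $X$ is $\BE$-projective if and only if $X^{\vee}$ is $\BE$-injective.

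For (b), let $P$ be $\BE$-projective. Two applications of \cref{lemma-closed-tensor-implies-projs-and-injs-ideal} give that $P^{\vee}\otimes P\otimes P^{\vee}$ is $\BE$-projective. The triangle identity for the strongly dualisable $P$ exhibits $P^{\vee}$ as a retract of $P^{\vee}\otimes P\otimes P^{\vee}$ via the standard composite
\[
P^{\vee}\cong P^{\vee}\otimes \unit \longrightarrow P^{\vee}\otimes P\otimes P^{\vee}\longrightarrow \unit\otimes P^{\vee}\cong P^{\vee}
\]
involving coevaluation and evaluation. Since $\BE(-,Z)=0$ passes to direct summands, $P^{\vee}$ is $\BE$-projective.

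The main obstacle I anticipate is step (b): the duality of step (a) alone only provides a bijection between projectives and injectives, and upgrading this bijection to an equality of classes genuinely requires both the tensor-ideal property and the retract coming from the triangle identity of the strong duality.
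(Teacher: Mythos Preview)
Your proposal is correct and follows the same two-step architecture as the paper: establish (a) that $(-)^{\vee}$ interchanges projectives and injectives, and (b) that projectives are closed under $(-)^{\vee}$, then combine via $X\cong X^{\vee\vee}$. Step~(a) is essentially identical to the paper's first paragraph, which derives $\BE(Y,X)\cong\BE(X^{\vee},Y^{\vee})$ from the extriangulated adjunction. The genuine difference is in step~(b). The paper proves it by a long-exact-sequence argument: for an extriangle $U\to V\to W\dashrightarrow$ it builds a commutative ladder comparing $\Hom{\cat{A}}{X^{\vee}}{-}$ with $\Hom{\cat{A}}{\unit}{X\otimes-}$ via the adjunction, and uses that $X\otimes W$ is $\BE$-projective (from \cref{lemma-closed-tensor-implies-projs-and-injs-ideal}) to force the connecting map to vanish, whence every morphism $X^{\vee}\to W$ lifts along the deflation. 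Your argument is shorter and more structural: the triangle identity exhibits $P^{\vee}$ as a retract of $P^{\vee}\otimes P\otimes P^{\vee}$, which lies in the tensor ideal of projectives, and $\BE$-projectivity is inherited by summands. Your route avoids the diagram chase entirely and uses only the bare duality data; the paper's route, by contrast, makes explicit use of the exact-sequence machinery of extriangulated categories and of \cref{adjunction_compatible_cup}, so it illustrates how the extriangulated adjunction interacts with extriangles. Both rely on \cref{lemma-closed-tensor-implies-projs-and-injs-ideal} in an essential way.
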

\begin{proof}
Let $X,Y\in \cat{A}$.
Since $X$ is strongly dualisable and $\cat{A}$ is symmetric we have 
%$X\cong (X^{\vee})^{\vee}$ which means 
\[
\BE(Y,X)\cong 
\BE(Y,\hom{X^{\vee}}{\unit})\cong 
\BE (Y\otimes X^{\vee},\unit)\cong 
\BE (X^{\vee}\otimes Y,\unit)
\cong
\BE (X^{\vee},Y^{\vee}) \,;
\]
the second and fourth  isomorphisms hold by \cref{lemma-extriangulated-adjoints}. 
Hence $X$  is $\BE$-injective if and only if $X^{\vee}$ is $\BE$-projective, and  $Y$ is $\BE$-projective if and only if $Y^{\vee}$ is $\BE$-injective. 

Let $U \to V \to W \dashrightarrow$ be an extriangle in $\cat{A}$. Since $X$ is strongly dualisable, the functor $\hom{X^{\vee}}{-} \cong (X^\vee)^\vee \otimes - \cong X \otimes -$ is an extriangulated right adjoint to $-\otimes X^{\vee}$. Combining the adjunction with the unitor we obtain a diagram
\[
\begin{tikzcd}[column sep=small]
    \Hom{\cat{A}}{X^{\vee}}{U}
    \arrow[r]\arrow[d]
    &
    \Hom{\cat{A}}{X^{\vee}}{V}
    \arrow[r]\arrow[d]
    &
    \Hom{\cat{A}}{X^{\vee}}{W}
    \arrow[r]\arrow[d]
    &
    \BE(X^{\vee},U)
    \arrow[d]
    \\
     \Hom{\cat{A}}{\unit}{X\otimes U}
     \arrow[r]
    &
    \Hom{\cat{A}}{\unit}{X\otimes V}
    \arrow[r]
    &
    \Hom{\cat{A}}{\unit}{X\otimes W}
    \arrow[r]
    &
    \BE(\unit,X\otimes U)
\end{tikzcd}    
\]
which commutes and in which every row is exact and every vertical map is an isomorphism; see \cref{adjunction_compatible_cup} and \cite[Proposition~(2-ii)]{Nakaoka/Palu:2019}. 

Suppose $X$ is $\BE$-projective. 
By \cref{lemma-closed-tensor-implies-projs-and-injs-ideal} we have that $X\otimes W$ is $\BE$-projective, and so the morphism $\Hom{\cat{A}}{\unit}{X\otimes W}
        \to 
        \BE(\unit,X\otimes U)$ in the diagram above is zero, meaning that the map $\Hom{\cat{A}}{X^{\vee}}{V}
        \to 
        \Hom{\cat{A}}{X^{\vee}}{W}$ is surjective. 
        Altogether, for any extriangle $U \to V \to W \dashrightarrow$ every morphism $X^{\vee}\to W$ factors through the deflation $V \to W$. 
        Thus  $X^{\vee}$ is $\BE$-projective, and hence $X$ is $\BE$-injective. 

        If instead $X$ is $\BE$-injective, then $X^{\vee}$ is $\BE$-projective by the first argument above, and so $X^{\vee}$ is $\BE$-injective by the second, and so $(X^{\vee})^{\vee}$ is $\BE$-projective by the first, and $X\cong (X^{\vee})^{\vee}$ since $\otimes$ is symmetric and $X$ is strongly dualisable. 
\end{proof}

\begin{corollary}
Let $(\cat{A},\BE,\fs,\otimes,\unit)$ be a closed tensor extriangulated category in which each object is strongly dualisable. If $\unit$ has a $\BE$-projective cover or a $\BE$-injective envelope, then $\cat{A}$ is Frobenius.
\end{corollary}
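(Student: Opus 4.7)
The plan is to reduce the statement to its essential content using the two preceding results. By Proposition \ref{prop:closed-with-str-dual-objs-make-Frob}, the classes of $\BE$-projectives and $\BE$-injectives coincide in $\cat{A}$, so having enough of one class automatically yields enough of the other. Consequently, the only thing left to establish is that either class is witnessed at every object, and then ``Frobenius'' follows. The strategy is therefore: starting from a covering/enveloping extriangle at the unit, produce covering/enveloping extriangles at arbitrary $X \in \cat{A}$ by tensoring with $X$.

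Concretely, first suppose $\unit$ has a $\BE$-projective cover, given as an extriangle $K \to P \to \unit \dashrightarrow$ with $P$ being $\BE$-projective. For any $X \in \cat{A}$, the functor $- \otimes X = (-\otimes X, \phi^X)$ is extriangulated, being a component of the biextriangulated structure in Definition \ref{definition-tensor-extriangulated-category}\cref{item:TE-category-tensor}, so tensoring yields an extriangle $K \otimes X \to P \otimes X \to \unit \otimes X \dashrightarrow$. Condition \cref{item:TE-category-alpha-lambda-rho-extriangulated} of Definition \ref{definition-tensor-extriangulated-category} ensures $\nat{\lambda}{X} \colon \unit \otimes X \overset{\cong}{\longrightarrow} X$ is extriangulated, so we may identify the third term and obtain an extriangle $K \otimes X \to P \otimes X \to X \dashrightarrow$. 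By Lemma \ref{lemma-closed-tensor-implies-projs-and-injs-ideal}, $P \otimes X$ is $\BE$-projective, so $\cat{A}$ has enough $\BE$-projectives.

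The dual case is analogous: if $\unit$ admits an $\BE$-injective envelope $\unit \to I \to C \dashrightarrow$, then Proposition \ref{prop:closed-with-str-dual-objs-make-Frob} combined with Lemma \ref{lemma-closed-tensor-implies-projs-and-injs-ideal} shows the $\BE$-injectives form a tensor ideal; tensoring with $X$ and applying the extriangulated unitor produces $X \to I \otimes X \to C \otimes X \dashrightarrow$ with $I \otimes X$ being $\BE$-injective, hence $\cat{A}$ has enough $\BE$-injectives. In either case, invoking Proposition \ref{prop:closed-with-str-dual-objs-make-Frob} once more gives the remaining side, so $\cat{A}$ is Frobenius.

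No step should present a genuine obstacle; the proof is essentially a bookkeeping exercise combining the two preceding results with the fact that extriangulated functors and extriangulated natural isomorphisms preserve extriangles. The only point requiring a small amount of care is to confirm that the unitor identification legitimately transports the tensored extriangle onto an extriangle with endpoint $X$, but this is precisely what Definition \ref{definition-tensor-extriangulated-category}\cref{item:TE-category-alpha-lambda-rho-extriangulated} asserts.
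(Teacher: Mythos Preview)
Your tensoring argument to propagate a cover or envelope from $\unit$ to an arbitrary object $X$ is correct and matches the paper. However, there is a genuine gap in the step where you claim that, because Proposition~\ref{prop:closed-with-str-dual-objs-make-Frob} identifies the $\BE$-projectives with the $\BE$-injectives, ``having enough of one class automatically yields enough of the other.'' Coincidence of the two classes of objects does \emph{not} by itself imply this: ``enough projectives'' asks for extriangles with $X$ in the third position, while ``enough injectives'' asks for extriangles with $X$ in the first position, and these existence statements are logically independent even when the middle term is drawn from the same class. Thus in the projective-cover case you have only established enough projectives, and invoking Proposition~\ref{prop:closed-with-str-dual-objs-make-Frob} ``once more'' does not supply enough injectives.

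The paper closes this gap by exploiting strong dualisability explicitly: from the projective cover $P \to \unit$ it passes to the dual map $\unit \cong \unit^\vee \to P^\vee$, which is an injective envelope because $P^\vee$ is $\BE$-injective (this is shown in the proof of Proposition~\ref{prop:closed-with-str-dual-objs-make-Frob}) and because $\hom{-}{\unit}$ is extriangulated by the closedness assumption. Tensoring this envelope with $X$ then gives enough injectives. Your argument can be repaired along the same lines: once you have enough projectives, apply a projective cover to $X^\vee$ and dualise to obtain an injective envelope of $(X^\vee)^\vee \cong X$.
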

\begin{proof}
Thanks to \cref{prop:closed-with-str-dual-objs-make-Frob} it is enough to check that $\cat{A}$ has enough $\BE$-projectives and $\BE$-injectives. If $P \to \unit$ is a $\BE$-projective cover of $\unit$ then $P \otimes X \to \unit \otimes X \cong X$ is a $\BE$-projective cover of $X$. Hence $\cat{A}$ has enough $\BE$-projectives. Further, the dual $\unit^\vee \cong \unit \to P^\vee$ of the $\BE$-projective cover $P \to \unit$ is a $\BE$-injective cover. Hence by a similar argument as above $\cat{A}$ has enough $\BE$-injectives.
\end{proof}

\begin{remark}
In a closed symmetric monoidal category the dual $X^\vee$ of a strongly dualisable object $X$ is a right and left dual of $X$. In this setting, assuming that all objects are strongly dualisable is equivalent to assuming the category is rigid. However, for an extriangulated tensor category that is rigid, it is not clear whether the functor given by the left or right duals is an extriangulated functor. In the case of abelian categories this can be resolved using the universal properties of kernels and cokernels; see \cite[Proposition~4.2.9]{Etingof/Gelaki/Nikshych/Ostrik:2015}. 
\end{remark}

%%%%%%%%%%%%%%%%%%%%%%%%%%%%%%%%%%%%%%%%%%%%%%%%%%%%%%%%%%%
\subsection{Extriangulated substructure} 
\label{sec:relative-structure-to-generate-new-examples-from-old}

Let $(\cat{A},\BE,\fs)$ be an extriangulated category and $\cat{I}$ a full subcategory of $\cat{A}$. Then there is a closed subfunctor of $\BE$ defined by
\[
\BE_{\cat{I}}(Y,X) \coloneqq \{d \in\BE(Y,X)\mid f^*(d)=0\text{ for all } W \in\cat{I}\text{ and }f\in\Hom{\cat{A}}{W}{Y}\}\nospacepunct{;} 
\]
see \cite[Proposition~3.16]{Herschend/Liu/Nakaoka:2021}. So, $(\cat{A},\BE_\cat{I},\fs_\cat{I})$ is an extriangulated category by \cite[Proposition~3.19]{Herschend/Liu/Nakaoka:2021} where $\fs_\cat{I}$ is the restriction of $\fs$. 

\begin{lemma}
\label{lemma:generating-new-examples-from-old}
Let $(\cat{A},\BE,\fs, \otimes, \unit)$ be a monoidal extriangulated category and let $\cat{I}$ be a full subcategory of $\cat{A}$. Suppose that, for the biextriangulated functor $(-\otimes-,\phi,\psi)$, 
\begin{equation*}
\nat{\phi^{Y}}{X',X}(d) \in \BE_{\cat{I}}(X'\otimes Y,X \otimes Y)
\quad \text{and} \quad
\nat{\psi^{X}}{Y',Y}(e)\in \BE_{\cat{I}}(X\otimes Y',X\otimes Y)
\end{equation*}
for each $X,X',Y,Y' \in\cat{A}$, and each $d \in \BE_{\cat{I}}(X',X)$ and $e \in \BE_{\cat{I}}(Y',Y)$. Then $(\cat{A},\BE_{\cat{I}},\fs_{\cat{I}}, \otimes, \unit)$ is a monoidal extriangulated category. 

When $(\cat{A},\BE,\fs, \otimes, \unit)$ is symmetric or strong, then so is $(\cat{A},\BE_{\cat{I}},\fs_{\cat{I}}, \otimes, \unit)$.
\end{lemma}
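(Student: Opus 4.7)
The plan is to verify each clause of \cref{definition-tensor-extriangulated-category} together with the symmetric and strong addenda for the tuple $(\cat{A},\BE_\cat{I},\fs_\cat{I},\otimes,\unit)$. Clauses \cref{item:TE-category-monoidal} and \cref{item:TE-category-extriangulated} require no work: the monoidal structure is unchanged, and $(\cat{A},\BE_\cat{I},\fs_\cat{I})$ is extriangulated by \cite[Proposition~3.19]{Herschend/Liu/Nakaoka:2021}.

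The core of the argument is clause \cref{item:TE-category-tensor}, which amounts to checking the axioms of \cref{biextri} for $(-\otimes-,\phi,\psi)$ with the restricted extension functor. The hypothesis of the lemma is precisely that $\nat{\phi^Y}{X',X}$ and $\nat{\psi^X}{Y',Y}$ corestrict to natural transformations valued in $\BE_\cat{I}$. Consequently, an $\fs_\cat{I}$-extriangle, i.e.\ one whose realisation class lies in $\BE_\cat{I}$, maps under $(-\otimes Y,\phi^Y)$ to an $\fs$-extriangle whose class $\nat{\phi^Y}{X',X}(d)$ again lies in $\BE_\cat{I}$ and is hence an $\fs_\cat{I}$-extriangle; this delivers conditions \cref{biextri_extri_in_2} and \cref{biextri_extri_in_3}. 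Conditions \cref{biextri_extra_in_B} and \cref{biextri_extra_in_A} assert commutativity of squares of homomorphisms taking values in $\BE$; since $\BE_\cat{I}$ is a subfunctor of $\BE$, these squares remain commutative after replacing every occurrence of $\BE$ by $\BE_\cat{I}$.

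For clause \cref{item:TE-category-alpha-lambda-rho-extriangulated} and the symmetric addendum for $\sigma$, I would use the reformulation in \cref{nat_trans_cup_product}: the statement that $\alpha$, $\lambda$, $\rho$ (and $\sigma$) are extriangulated natural transformations amounts to identities of cup products of elements of $\BE_\cat{I}$ with images of $\phi^Y$ or $\psi^X$ applied to elements of $\BE_\cat{I}$. Each side of such an identity already lies in $\BE_\cat{I}$ by the hypothesis, and the identity itself holds in $\BE$ since the original tuple is monoidal extriangulated. Hence it holds in $\BE_\cat{I}$.

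The strong addendum is the main obstacle. Equation \cref{eq:strong_extri_bifunctor} is an identity in $\BE^2(X\otimes Y,X'\otimes Y')$, and one must see that it holds in $\BE_\cat{I}^2(X\otimes Y,X'\otimes Y')$ once higher extensions of the substructure are constructed by one of the methods of \cref{sec-higer-exts}. The strategy is to note that the hypothesis guarantees each of the four factors $\nat{\phi^Y}{X,X'}(d)$, $\nat{\phi^{Y'}}{X,X'}(d)$, $\nat{\psi^X}{Y,Y'}(e)$, $\nat{\psi^{X'}}{Y,Y'}(e)$ appearing in \cref{eq:strong_extri_bifunctor} lies in $\BE_\cat{I}$, so that both cup products already make sense in $\BE_\cat{I}^2$; the canonical comparison $\BE_\cat{I}^\ast\to\BE^\ast$ induced by the inclusion is compatible with cup products and sends each side to the corresponding side in $\BE^2$, whence the identity transfers. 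The technical point I would prioritise is the compatibility of this comparison with the cup product, which amounts to the observation that the construction of higher extensions (via coends, syzygies, or cosyzygies) is natural in the choice of biadditive extension functor, and therefore commutes with passing to a subfunctor.
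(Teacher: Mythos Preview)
Your approach matches the paper's: check each clause of \cref{definition-tensor-extriangulated-category} by observing that the relevant natural transformations corestrict to $\BE_{\cat{I}}$ and that every commutativity condition is the restriction of one already known for $(\cat{A},\BE,\fs,\otimes,\unit)$. The paper handles \cref{item:TE-category-tensor} and \cref{item:TE-category-alpha-lambda-rho-extriangulated} in exactly this way, and for strongness it says only that ``the cup product is compatible with restriction,'' which is the same idea you spell out.

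One caution on the strong addendum. Your phrase ``whence the identity transfers'' tacitly uses that the comparison map $\BE_{\cat{I}}^{2}\to\BE^{2}$ is injective: knowing that two elements of $\BE_{\cat{I}}^{2}$ have equal images in $\BE^{2}$ does not by itself force them to be equal. For the coend construction this injectivity is not formal, since coends need not preserve monomorphisms; for the (co)syzygy construction the syzygies taken in the substructure differ in general from those in the ambient structure, so the comparison is not a subgroup inclusion on the nose. The paper's one-line treatment leaves the same point implicit, so your proposal is no less complete than the published proof, but if you want a fully rigorous argument you should either establish injectivity in the setting at hand or argue the identity \eqref{eq:strong_extri_bifunctor} directly inside $\BE_{\cat{I}}^{2}$ rather than by transfer.
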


\begin{proof}
By \cite[Proposition~3.16]{Herschend/Liu/Nakaoka:2021} we have that $\BE_{\cat{I}}$ is a subfunctor of $\BE$. By our assumption  the restriction of  $\varphi^{Y}$ to $\BE_{\cat{I}}(-,-)$ defines a subtransformation in the sense of \cite[p.~261]{Eilenberg-Maclane:1945}. We call this restriction $\varphi^{Y}_\cat{I}$. We also obtain a restriction $\psi^X_\cat{I}$ of $\psi^X$. We need to check conditions \cref{item:TE-category-monoidal,item:TE-category-extriangulated,item:TE-category-tensor,item:TE-category-alpha-lambda-rho-extriangulated} from \cref{definition-tensor-extriangulated-category} hold. Condition \cref{item:TE-category-monoidal} holds by assumption. Condition \cref{item:TE-category-extriangulated} holds by the discussion before the statement of the lemma. In what remains we check conditions \cref{item:TE-category-tensor,item:TE-category-alpha-lambda-rho-extriangulated}. 

We start with \cref{item:TE-category-tensor} and we check \cref{biextri_extri_in_2} from \cref{biextri} holds; that is that the functor $(- \otimes Y,\phi^Y_\cat{I})$ is extriangulated. Suppose $X \to X'' \to X' \xdashedrightarrow{d}$ is an extriangle in $(\cat{A},\BE_\cat{I},\fs_\cat{I})$ with $d \in \BE(X',X)$. Then it is an extriangle in $(\cat{A},\BE,\fs)$ and, as $(- \otimes Y,\phi^Y)$ is an extriangulated functor, 
\begin{equation*}
X\otimes Y \to X''\otimes Y \to X'\otimes Y \xdashedrightarrow{\phi^Y_\cat{I}(d)}
\end{equation*}
is an extriangle. As $\fs_\cat{I}$ is the restriction of $\fs$ to $\BE_\cat{I}$ the functor $(- \otimes Y,\phi^Y_\cat{I})$ is extriangulated. Using a similar argument shows that \cref{biextri_extri_in_3} from \cref{biextri} holds; that is $(X \otimes -,\psi^X_\cat{I})$ is extriangulated. 

To finish the proof we need to show that certain diagrams in $\Mod{R}$  commute; namely the compatibility diagrams of $\phi_\cat{I}$ and $\psi_\cat{I}$ from \cref{biextri_extra_in_B} and \cref{biextri_extra_in_A} of \cref{biextri}, and their compatibility with $\alpha, \rho, \lambda$ of the monoidal structure. Each diagram is the restriction of the corresponding diagram for $(\cat{A},\otimes,\unit)$. Hence they commute.

When $(\cat{A},\otimes,\unit)$ is symmetric, then so is $(\cat{A},\otimes,\unit)$ by the same argument as above. For strongness, note that the cup product is compatible with restriction. 
\end{proof}

%%%%%%%%%%%%%%%%%%%%%%%%%
\subsection{Pure-exact extriangulated structure}
\label{subsec-pure-exact}

In the next example we discuss a special case of extriangulated subfunctors for compactly generated triangulated categories. The extriangles arising are precisely the pure-exact triangles.

We recall some basic definitions concerning triangulated categories. Let $\cat{T}$ be a triangulated category with suspension $\susp$. An object $X$ is \emph{compact} if the functor $\Hom{\cat{T}}{X}{-}\colon \cat{T}\to\abcat$ preserves coproducts, and we write $\compact{\cat{T}}$ for the full subcategory of compact objects. The triangulated category $\cat{T}$ is \emph{compactly generated}, if it has small coproducts, $\compact{\cat{T}}$ is essentially small and for any non-zero object $Y$ in $\cat{T}$ there exists an object $X\in \compact{\cat{T}}$ such that $\Hom{\cat{T}}{X}{Y}\neq 0$. 

% Recall  \cref{example_triangulated} and \cref{example:sanity-check}. 
\begin{proposition}
\label{closed-compact-gen-tri-makes-pure-exact-relative-extriangulated}
Let $(\cat{T},\otimes,\unit)$ be a compactly generated closed tensor triangulated category, considered as a tensor extriangulated category $(\mathcal{T},\BE,\fs, \otimes, \unit)$.  In the notation of \cref{sec:relative-structure-to-generate-new-examples-from-old}, we have that $(\mathcal{T},\BE_{\compact{\cat{T}}},\fs_{\compact{\cat{T}}}, \otimes, \unit)$ is a tensor extriangulated category. 
\end{proposition}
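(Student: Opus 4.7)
The strategy is to invoke \cref{lemma:generating-new-examples-from-old} with $\cat{I} = \compact{\cat{T}}$. The hypothesis on the biextriangulated functor $(- \otimes -, \phi, \psi)$ that needs verifying is the subfunctor condition: for every $d \in \BE_{\compact{\cat{T}}}(X',X)$ and $Y \in \cat{T}$, the extension $\nat{\phi^Y}{X',X}(d)$ lies in $\BE_{\compact{\cat{T}}}(X'\otimes Y, X \otimes Y)$, and analogously for $\psi^X$. Unravelling through \cref{example_triangulated,example_triangulated_bifunctor}, under the identification $\BE(A,B) = \Hom{\cat{T}}{A}{\susp B}$ the element $\nat{\phi^Y}{X',X}(d)$ corresponds to $\nat{\theta}{X,Y}\circ(d \otimes Y)$, so it classifies the triangle obtained by tensoring the triangle classifying $d$ with $Y$. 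Consequently, the required condition reduces to the following: for every $Y \in \cat{T}$, the triangulated functor $-\otimes Y$ preserves pure-exact triangles.

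The key step is this preservation, and I would argue it as follows. Since $(\cat{T}, \otimes, \unit)$ is closed, the endofunctor $-\otimes Y$ is a left adjoint (to $\hom{Y}{-}$), hence it preserves all small colimits and in particular filtered homotopy colimits; it also manifestly preserves split triangles. The plan is then to appeal to Krause's characterisation of pure-exact triangles in a compactly generated triangulated category as (isomorphic to) filtered homotopy colimits of split triangles between compact objects. This characterisation follows from the restricted Yoneda embedding $h\colon \cat{T}\to \operatorname{Mod}(\compact{\cat{T}})^{\operatorname{op}}$ identifying pure-exact triangles with short exact sequences of flat functors, together with Lazard-type arguments writing such sequences as filtered colimits of split sequences of finitely generated projectives. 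Combining these, $-\otimes Y$ sends any pure-exact triangle (a filtered hocolim of split triangles) to a filtered hocolim of split triangles, and hence to a pure-exact triangle.

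For $\psi^X$, the same conclusion follows by symmetry: since the monoidal structure on $\cat{T}$ is symmetric and the braiding $\sigma$ is a natural isomorphism that is extriangulated in each variable, one can transfer the statement for $- \otimes Y$ to one for $X \otimes -$. The remaining assumptions of \cref{lemma:generating-new-examples-from-old} — that $(\cat{T}, \otimes, \unit)$ is (symmetric) monoidal and that $\alpha,\lambda,\rho,\sigma$ are extriangulated in each variable — are inherited directly from $(\cat{T},\BE,\fs,\otimes,\unit)$, since $\BE_{\compact{\cat{T}}}$ is a subfunctor of $\BE$ and so every commuting diagram in $\BE$ restricts to one in $\BE_{\compact{\cat{T}}}$.

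The main obstacle is the characterisation of pure-exact triangles as filtered homotopy colimits of split triangles among compacts, which is a non-trivial input from Krause's theory of pure-injectivity. A more elementary alternative is: write $Y = \operatorname{hocolim}_i Y_i$ with each $Y_i$ compact, use compactness of $C$ to factor any $g\colon C \to X'\otimes Y$ through some $X' \otimes Y_i$, and then reduce to showing that $-\otimes Y_i$ preserves pure-exact triangles for compact $Y_i$; however, this last reduction seems to require an additional rigidity assumption (strong dualisability of compacts), which is why I prefer the hocolim-of-splits route. Note finally that we do not claim the resulting monoidal extriangulated structure is strong, since the subfunctor $\BE_{\compact{\cat{T}}}$ need not admit higher extensions compatible with those of $\BE$.
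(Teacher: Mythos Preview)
Your overall strategy matches the paper exactly: reduce to \cref{lemma:generating-new-examples-from-old} by showing that tensoring with a fixed object preserves phantom maps (equivalently, pure-exact triangles). The difference lies in how you establish this key step. The paper argues directly at the level of morphisms: since $-\otimes Y$ is a coproduct-preserving left adjoint, the composite $\mathbf{Y}\circ(-\otimes Y)$ is a cohomological functor into an AB5 category that preserves coproducts, and \cite[Corollary~2.5]{Krause:2000} then says any such functor annihilates phantom maps; hence $d\otimes\id_Y$ is phantom whenever $d$ is (this is packaged as \cref{lemma-on-phantom}). Your route instead passes through a structural characterisation of pure-exact triangles as filtered homotopy colimits of split triangles between compacts. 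This is morally correct and conceptually appealing, but it is harder to make precise in a bare triangulated category (where ``filtered hocolim of a diagram of triangles'' needs care, and the lifting from short exact sequences of flat functors back to triangles is not a one-liner). The paper's approach buys you a two-line argument with a clean citation; your approach would buy a more conceptual explanation of \emph{why} the result holds, at the cost of a heavier technical input that is less directly citable. Your handling of $\psi^X$ via symmetry and the remark on strongness are both fine and agree with the paper.
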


We first recall the definition and some basic properties of phantom maps from \cite{Krause:2000} as they are closely connected to the extriangulated substructure $\BE_{\compact{\cat{T}}}$. 

Let $\rMod{(\compact{\cat{T}})}$ be the category of additive functors $\op{(\compact{\cat{T}})}\to \abcat$. This is an abelian AB5 category. Let $\mathbf{Y}\colon \cat{T} \to \rMod{(\compact{\cat{T}})}$ be the \emph{restricted Yoneda functor}, defined on an object $X$ by the restriction 
of $\Hom{\cat{T}}{-}{X}$ to $\compact{\cat{T}}$ and likewise on morphisms. An additive functor $\sF \colon \cat{T}\to \rMod{(\compact{\cat{T}})}$ is \emph{cohomological} if any exact triangle 
\begin{equation}
\label{eq-triangle}
\begin{tikzcd}
X  \ar[r,"f"]  & Y \ar[r,"g"] & Z \ar[r,"h"] & \susp X
\end{tikzcd}
\end{equation}
induces an exact sequence $\sF(X) \to \sF(Y)\to \sF(Z)\to \sF(\susp X)$ in $\abcat$. In particular, the restricted Yoneda functor $\mathbf{Y}$ is cohomological. 

A morphism $f$ in $\cat{T}$ is called \emph{phantom} if $\mathbf{Y}(f)=0$. The composition of a phantom morphism with any morphism is phantom. A triangle \eqref{eq-triangle} is said to be \emph{pure-exact} if $0\to \mathbf{Y}(X)\to \mathbf{Y}(Y)\to \mathbf{Y}(Z)\to 0$ is exact. So \eqref{eq-triangle} is pure-exact if and only if $h$ is phantom; see \cite[Lemma~1.3]{Krause:2000}. 

\begin{lemma}
\label{lemma-on-phantom}
Let $(\cat{T},\otimes,\unit)$ be a closed tensor triangulated category. Let $f$ and $g$ be morphisms in $\cat{T}$. If $f$ or $g$ is phantom, then so is $f \otimes g$. 
\end{lemma}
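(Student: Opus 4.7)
The plan is to reduce to a one-sided statement and then to exploit closedness together with a cellular presentation of objects in the compactly generated triangulated category. First, by bifunctoriality I would write $f\otimes g = (\id_{X'}\otimes g)\circ (f\otimes \id_{Y})$, and note that phantom morphisms form a two-sided ideal: since $\mathbf{Y}$ is a functor, $\mathbf{Y}(h)=0$ forces $\mathbf{Y}(hk)=\mathbf{Y}(kh)=0$ for any morphism $k$. Using the symmetry of $\otimes$ (via the braiding $\nat{\sigma}{-,-}$) to handle the case where $g$ is phantom, this reduces the task to the following claim: if $f\colon X\to X'$ is phantom, then $f\otimes \id_{Y}\colon X\otimes Y\to X'\otimes Y$ is phantom for every object $Y\in\cat{T}$.

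For the reduced claim I would express $X$ as a sequential homotopy colimit $X\cong \operatorname{hocolim}_{n}X_{n}$, where each $X_{n}$ is a (possibly infinite) coproduct of compact objects; this is the standard cellular presentation available in any compactly generated triangulated category. Because $\cat{T}$ is closed, the functor $-\otimes Y$ is a left adjoint and so preserves coproducts and the defining triangle of $\operatorname{hocolim}$, yielding $X\otimes Y\cong \operatorname{hocolim}_{n}(X_{n}\otimes Y)$. Given a compact $C$ and any $\alpha\colon C\to X\otimes Y$, compactness of $C$ implies $\alpha$ factors as $\alpha = (\iota_{n}\otimes \id_{Y})\circ \alpha'$ for some $n$, where $\iota_{n}\colon X_{n}\to X$ is the canonical morphism and $\alpha'\colon C\to X_{n}\otimes Y$. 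Writing $X_{n}=\bigoplus_{i}C_{i}$ as a coproduct of compacts, $\iota_{n}$ is determined by components $p_{i}\colon C_{i}\to X$; phantomness of $f$ forces $fp_{i}=0$ for every $i$, so $f\iota_{n}=0$ and therefore
\[
(f\otimes \id_{Y})\circ \alpha \;=\; \bigl((f\iota_{n})\otimes \id_{Y}\bigr)\circ \alpha' \;=\; 0,
\]
which is precisely what is required.

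The main obstacle is the cellular presentation of an arbitrary object as a sequential homotopy colimit of coproducts of compact objects, together with the fact that $\Hom{\cat{T}}{C}{-}$ preserves such sequential homotopy colimits when $C$ is compact. Both are standard in the theory of compactly generated triangulated categories, but closedness enters crucially here to ensure that $-\otimes Y$ commutes with this homotopy colimit, which is the heart of the argument.
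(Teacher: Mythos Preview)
Your reduction to the one-sided claim and the observation that phantoms form an ideal are fine and match the paper. The gap is in your cellular presentation. In the standard tower for a compactly generated triangulated category one has $X\cong\operatorname{hocolim}_n X_n$ where $X_0$ is a coproduct of compact objects and $X_{n+1}$ is obtained from $X_n$ as the \emph{cone} of a map from a coproduct of compacts; for $n\geq 1$ the stages $X_n$ are \emph{not} themselves coproducts of compact objects. So your decomposition $X_n=\bigoplus_i C_i$ is unavailable, and with it the conclusion $f\iota_n=0$. Trying to get by with only the bottom stage is circular: the canonical map $\pi\colon X_0\to X$ has phantom cofibre $X\to Z$, and asking that $\pi\otimes\id_Y$ remain surjective on maps from compacts is exactly asking that $(X\to Z)\otimes\id_Y$ be phantom, which is the statement under proof.

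The paper sidesteps this by citing a result of Krause: any cohomological, coproduct-preserving functor from $\cat{T}$ to an AB5 abelian category annihilates phantom maps. Closedness makes $-\otimes Y$ a left adjoint, hence coproduct-preserving, and $\mathbf{Y}$ preserves coproducts as well; thus $\mathbf{Y}(-\otimes Y)$ is such a functor and $\mathbf{Y}(f\otimes\id_Y)=0$ follows immediately. Your cellular idea can in principle be made to work, but it requires an honest induction up the tower (showing that $(f\iota_n)\otimes\id_Y$ kills maps from compacts for every $n$, using the attaching triangles at each stage), which is essentially a reproof of Krause's statement rather than a shortcut around it.
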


\begin{proof}
As phantom morphisms are closed under composition with any other morphism, it is enough to show $f \otimes \id$ is phantom when $f$ is phantom and $\id \otimes g$ is phantom when $g$ is phantom. We give a proof for the former, the latter holds by symmetry.

The restricted Yoneda functor $\mathbf{Y}$ preserves coproducts. Since $- \otimes X$ is a left adjoint for any object $X \in \cat{T}$, it also preserves coproducts. It follows that the composition $\sF \coloneqq \mathbf{Y}\circ (- \otimes X)$ is a cohomological functor $\cat{T}\to \rMod{(\compact{\cat{T}})}$ that preserves coproducts. 

Let $f$ be a phantom morphism. By \cite[Corollary 2.5]{Krause:2000} we have that $\mathbf{Y}(f \otimes \id_X) = \sF(f)=0$ as $\sF$ is cohomological and preserves coproducts. 
\end{proof}

We can now give an explicit description of the extriangulated substructure $\BE_{\compact{\cat{T}}}$ from \cref{closed-compact-gen-tri-makes-pure-exact-relative-extriangulated}. If $d \in \BE(Y,X) = \Hom{\cat{T}}{Y}{\susp X}$, then $f^*(d) = d \circ f$ for every $f \colon W \to Y$ with in $\cat{T}$. Hence $d \in \BE_{\compact{\cat{T}}}(Y,X)$ if and only $d \circ f = 0$ for every map $f \colon C \to Y$ with $C \in \compact{\cat{T}}$. That means $\BE_{\compact{\cat{T}}}(Y,X)$ is the subset of phantom maps in $\Hom{\cat{T}}{Y}{\susp X}$, and $X \to Y \to Z \dashrightarrow$ is an extriangle if and only if $X \to Y \to Z \to \susp X$ is a pure-exact triangle.

\begin{example}
    Let $R$ be a non-artinian ring.  
    The derived category  $\dcat{\Mod{R}}$ is compactly generated triangulated where compact objects are perfect complexes. 
    We claim that the extriangulated structure on $\dcat{\Mod{R}}$, given by \cite[Proposition~3.19]{Herschend/Liu/Nakaoka:2021} taking $\cat{I}$ to be the compact objects, is neither  triangulated nor exact.  

Using that $R$ is not artinian, it cannot be pure-semisimple by the implication $(i)\Rightarrow(a)$ in \cite[Theorem~2.1]{Prest:1984}. 
By \cite[Corollary~7.2]{Garkusha/Prest:2005}, it follows that $\dcat{\Mod{R}}$ is not pure-semisimple, and so there exists an object $X$ that is not pure-injective.  
By \cite[Definition~1.1]{Krause:2000}, there exists a non-split pure-exact triangle $X\to Y\to Z\to \susp X$. 
Since categorical monics split  in triangulated categories, $X\to Y$ cannot be monic.  
If the extriangulated structure on $\dcat{\Mod{R}}$ were exact, the inflation  $X\to Y$ would be a kernel, which is false. 

It suffices to find a contradiction assuming the extriangulated structure is triangulated. 
Choose a morphism $X\to Y$ in $\dcat{\Mod{R}}$ by concentrating an arbitrary homomorphism $f$ in $\Mod{R}$ in degree $0$. 
By assumption we can complete $X\to Y$ to a pure-exact triangle $X\to Y\to Z\to \susp X$. 
By \cite[Lemma~2.4]{Garkusha/Prest:2005} we have that $0\to \hh[n]{X}\to \hh[n]{Y}\to \hh[n]{Z}\to 0$ is pure-exact in $\Mod{R}$ for each $n\in\BZ$. 
Taking $n=0$ implies that $f$ is a pure-monomorphism. 
Since pure-monomorphisms  in  $\Mod{R}$ are injective, a contradiction is found by choosing $f$ not injective.  
\end{example}

Finally we prove of \cref{closed-compact-gen-tri-makes-pure-exact-relative-extriangulated} using the observations in \cref{sec:relative-structure-to-generate-new-examples-from-old}.

\begin{proof}[Proof of \cref{closed-compact-gen-tri-makes-pure-exact-relative-extriangulated}.]
We check that the assumptions of \cref{lemma:generating-new-examples-from-old} are satisfied. By \cref{example_triangulated_functor,example_triangulated_bifunctor} we have
\begin{equation*}
\nat{\phi^Y}{X',X}(d) = (X' \otimes Y \xrightarrow{d \otimes \id_Y} (\susp X) \otimes Y \xrightarrow{\cong} \susp (X \otimes Y))
\end{equation*}
for $d \in \BE(X',X)$. If $d$ is phantom, then so is $\nat{\phi^Y}{X',X}(d)$ by \cref{lemma-on-phantom}. The analogous condition for $\psi$ can be shown similarly. Hence $(\cat{T},\BE_{\compact{\cat{T}}},\fs_{\compact{\cat{T}}},\otimes,\unit)$ is an extriangulated tensor category by \cref{lemma:generating-new-examples-from-old}.
\end{proof}

% To give an example of this, let $\Lambda$ be a derived discrete algebra. Then $\cat{T}=\cat{K}(\Proj{\Lambda})$, the unbounded homotopy category of complexes of projective modules satisfies the desired conditions; see \cite{Arnesen/Laking/Pauksztello/Prest:2017}. More generally one can take $\Lambda$ to be any gentle algebra; see \cite{Bennett-Tennenhaus:2019}. 

%%%%%%%%%%%%%%%%%%%%%%%%%%%%%%%%%%%%%%%%%%%%%%%%%%%%%%%%%%%
%%%%%%%%%%%%%%%%%%%%%%%%%%%%%%%%%%%%%%%%%%%%%%%%%%%%%%%%%%%
\section{Classification of tensor ideals} \label{sec:tensor_ideals}

The goal of this \namecref{sec:tensor_ideals} is the classification of the radical thick tensor ideals in an essentially small tensor extriangulated category. 
In  \cite[Sections~7~and~8]{Buan/Krause/Solberg:2005} lattices of thick tensor ideals were considered  for both abelian and  triangulated categories simultaneously. 
The arguments for the classification of thick tensor ideals in a triangulated category of \cite{Balmer:2005} and \cite{Kock/Pitsch:2017} generalise straight away to the extriangulated case. For convenience we give the details of some important parts. 

%%%%%%%%%%%%%%%%%%%%%%%%%%%%%%%%%%%%%%%%%%%%%%%%%%%%%%%%%%%
\subsection{Tensor ideals}

Let $(\cat{A},\otimes,\unit)$ be a symmetric monoidal category. A non-empty strictly full subcategory $\cat{I}$ of $\cat{A}$ is a \emph{tensor ideal}, if for each $X \in \cat{A}$ and $Y \in \cat{I}$, we have $X \otimes Y \in \cat{I}$. 

For an object $X \in \cat{A}$ and a positive integer $n$, we set $X^{\otimes n} \coloneqq X \otimes \dots \otimes X$, the $n$-fold tensor product of $X$. For convenience we also set $X^{\otimes 0} \coloneqq \unit$. 

We say a tensor ideal $\cat{I}$ is:
\begin{enumerate}
 \item \emph{radical} if, 
 for all $X\in \cat{A}$, we have that $X^{\otimes n} \in \cat{I}$ for some $n \geq 1$ implies $X\in \cat{I}$; and 
 \item \emph{prime} if, for all $X,Y\in \cat{A}$, we have that $X\otimes Y \in\cat{I}$ implies $X\in\cat{I}$ or $Y\in\cat{I}$.
\end{enumerate}
Note that any prime tensor ideal is radical. 

Let $(\cat{A},\BE,\fs)$ be an extriangulated category. We say a non-empty full subcategory $\cat{I}$ is \emph{thick}, if it is closed under direct summands and has the \emph{2-out-of-3 property}; that is, for any extriangle  $X \to Y \to Z \dashrightarrow$,  if any two of $X$, $Y$, or $Z$ lie in $\cat{I}$, then so does the third. 

Let $\cat{C}$ be a collection of objects of $\cat{A}$. We denote by $\thick^\otimes(\cat{C})$ the smallest thick tensor ideal containing $\cat{C}$, and by $\rad(\cat{C})$ the smallest radical thick tensor ideal containing $\cat{C}$. These categories exist as the intersection of (radical) thick tensor ideals is again a (radical) thick tensor ideal. As taking the thick tensor closure is a finite process, the radical closure can be described as
\begin{equation} \label{radical_closure}
\rad(\cat{C}) = \set{X \in \cat{A} | X^{\otimes n} \in \thick^\otimes(\cat{C}) \text{ for some } n \geq 1}\,.
\end{equation}
For a tensor extriangulated category $(\mathcal{A},\otimes, \unit)$ we denote by $\Rad(\cat{A})$ the class of all radical thick tensor ideals of $\cat{A}$. When $\cat{A}$ is essentially small, then $\Rad(\cat{A})$ is a set. In \Cref{Rad_coherent} we show that, if $\Rad(\cat{A})$ is a set, it has a rich poset structure. 

%%%%%%%%%%%%%%%%%%%%%%%%%%%%%%%%%%%%%%%%%%%%%%%%%%%%%%%%%%%
\subsection{Lattices and frames}

For completeness we recall the terminology and fix notation; we loosely follow \cite{Johnstone:1982}. Recall, in a poset $L$ the \emph{join} (respectively, \emph{meet}) of a subset $S \subseteq L$ is the smallest (respectively, largest) element that contains (respectively, is contained in) every element of $S$. We denote the join of $S$ by $\bigvee S$ and the meet of $S$ by $\bigwedge S$. For elements $a,b \in L$ we write $a \vee b = \bigvee \{a,b\}$ and $a \wedge b = \bigwedge \{a,b\}$. 

A \emph{lattice} is a poset $L$ in which every finite, non-empty subset has a join and a meet. A lattice $L$ is \emph{distributive} if $L$ has a \emph{greatest element} $1 = \bigwedge \emptyset$, a \emph{least element} $0 = \bigvee \emptyset$ and it satisfies the \emph{distributive law}; that is
\begin{equation*}
a \wedge (b \vee c) = (a \wedge b) \vee (a \wedge c) \,, \quad \text{or equivalently} \quad a \vee (b \wedge c) = (a \vee b) \wedge (a \vee c)
\end{equation*}
for any $a,b,c \in L$. A \emph{morphism of distributive lattices} is a map that respects the order, finite meets, finite joins, and the greatest and least element.

A lattice $F$ is a \emph{frame} if the join of any subset exist and it satisfies the \emph{infinite join-distributivity law}; that is
\begin{equation*}
a \wedge \bigvee S = \bigvee \set{a \wedge s | s \in S}
\end{equation*}
for any $a \in F$ and any subset $S \subseteq F$. A \emph{morphism of frames} is a morphism of distributive lattices that preserves arbitrary joins. 

%%%%%%%%%%%%%%%%%%%%%%%%%%%%%%%%%%%%%%%%%%%%%%%%%%%%%%%%%%%
\subsection{Stone and Hochster duality}

The prototypical example of a frame is the set of all open subsets of a topological space. In fact, there is a functor
\begin{equation*}
\Omega \colon \Topcat \to \op{\Frmcat}
\end{equation*}
from the category of topological spaces with continuous maps to the category of frames. This functor has a right adjoint, given by the prime spectrum of a frame $(F,\leq)$: An element $a \in F$ is \emph{(meet-)prime}, if $b \wedge c\leq a $ implies that either $a \leq b$ or $a \leq c$. The \emph{prime spectrum of $F$} is the set of all prime elements in $F$ endowed with a topology given by the closed sets $V(a) = \set{p | a \leq p}$. We denote the right adjoint by $\pt$, due to the fact that prime elements in a frame correspond to points; for more details see \cite[II.1.3]{Johnstone:1982}. 

The adjoint functors $(\Omega,\pt)$ restrict to an equivalence between the category of sober spaces and the category of spatial frames. This is called \emph{Stone duality}; for more details see \cite[II.1.7]{Johnstone:1982}. This equivalence restricts further to an equivalence of the category of spectral spaces and the category of coherent frames. We recall the latter two notions.

An element $a$ in a frame $F$ is \emph{compact} if for each subset $S$ of $F$ with $a \leq \bigvee S$ there is a finite subset $T \subseteq S$ such that $a \leq \bigvee T$. A frame $F$ is \emph{coherent} if the greatest element $1$ is compact, $a \wedge b$ is compact for any compact elements $a,b \in F$, and for each $a \in F$ there exists a subset $S$ of compact elements satisfying $a = \bigvee S$. A \emph{morphism of coherent frames} is a morphism of frames that preserves compact elements. 

In a coherent frame $F$ the set of compact elements is a distributive sublattice of $F$, and the frame $F$ can be recovered from the distributive lattice of compact elements. In fact, there is an equivalence $\Cohcat \to \DLatcat$ from the category of coherent frames to the category of distributive lattices; see \cite[II.3.2]{Johnstone:1982}. 

On the other hand, a topological space $X$ is \emph{spectral}, if 
\begin{enumerate}
\item $X$ is quasi-compact;
\item the quasi-compact open subsets of $X$ are closed under finite intersections and form a basis of open sets of the topology on $X$; and
\item $X$ is \emph{sober}; that is every non-empty irreducible closed subset has a generic point; 
\end{enumerate}
see \cite{Hochster:1969}. A \emph{morphism of spectral spaces} is a continuous map such that the preimage of every quasi-compact open set is again quasi-compact. We denote the category of spectral spaces by $\Spcat$. The spectral spaces are precisely those topological spaces that occur as the Zariski spectrum of a commutative ring; see \cite{Hochster:1969}.

Putting everything together, we obtain a sequence of equivalences:
\begin{equation*}
\begin{tikzcd}
\Spcat \ar[r,shift left,"\Omega"] & \op{\Cohcat} \ar[l,shift left,"\pt"] \ar[r,"\compact{(-)}"] & \op{\DLatcat} \ar[r,leftrightarrow,"\op{(-)}"] & \op{\DLatcat} & \op{\Cohcat} \ar[l,"\compact{(-)}" swap] \ar[r,shift right,"\pt" swap] & \Spcat \ar[l,shift right,"\Omega" swap] \nospacepunct{.}
\end{tikzcd}
\end{equation*}
Here, the functor $\op{(-)}$ sends a lattice to its opposite lattice, which has the same underlying set and the reversed order. Note, that this is not a contravariant functor. The composition of these equivalences yields a functor $\Spcat \to \Spcat$, called \emph{Hochster duality}. This equivalence is usually denoted by $(-)^\vee$, and $(X^\vee)^\vee = X$ for a spectral space $X$.

%%%%%%%%%%%%%%%%%%%%%%%%%%%%%%%%%%%%%%%%%%%%%%%%%%%%%%%%%%%
\subsection{Lattice of tensor ideals}

Let $(\cat{A},\otimes,\unit)$ be a tensor extriangulated category. The meet of two radical thick tensor ideals is given as their intersection. The join of a set of radical thick tensor ideal $S$ is
\begin{equation*}
\bigvee S = \rad(\bigcup_{\cat{I} \in S} \cat{I})\,.
\end{equation*}
The least element is the nilradical ideal $\rad(\{0\})$ and the greatest element is the whole category $\cat{A}$. Using the same arguments as for the triangulated case in \cite[Theorem~3.1.9]{Kock/Pitsch:2017}, we show the following:

\begin{proposition} \label{Rad_coherent}
Suppose $(\mathcal{A},\otimes, \unit)$ is a tensor extriangulated category. If $\Rad(\cat{A})$ is a set, then it is a coherent frame.
\end{proposition}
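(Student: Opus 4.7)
My plan is to adapt the argument of \cite[Theorem~3.1.9]{Kock/Pitsch:2017}, where the extriangulated structure enters only through the verification that certain auxiliary subcategories are thick tensor ideals. Arbitrary intersections of radical thick tensor ideals are again radical thick tensor ideals, so $\Rad(\cat{A})$ has all meets and (formally) all joins, the latter being $\bigvee_s \cat{I}_s = \rad(\bigcup_s \cat{I}_s)$. I would identify the compact elements as the subcategories $\rad(X)$ for single objects $X \in \cat{A}$: such a $\rad(X)$ is compact because, if $X \in \rad(\bigcup_s \cat{I}_s)$, then $X^{\otimes n} \in \thick^\otimes(\bigcup_s \cat{I}_s)$ for some $n$ by \cref{radical_closure}, and thick-tensor closure is a finitary process, so $X^{\otimes n}$ already belongs to $\thick^\otimes(\cat{I}_{s_1} \cup \cdots \cup \cat{I}_{s_k})$ for finitely many indices; conversely, every $\cat{I}$ equals $\bigvee_{X \in \cat{I}} \rad(X)$ and $\rad(X_1) \vee \cdots \vee \rad(X_k) = \rad(X_1 \oplus \cdots \oplus X_k)$, so any compact element reduces to this single-generator form. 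The meet formula $\rad(X) \wedge \rad(Y) = \rad(X \otimes Y)$ follows by checking that $\{A \in \cat{A} : A \otimes Y \in \thick^\otimes(X \otimes Y)\}$ is a thick tensor ideal containing $X$ (using biextriangulatedness of $\otimes$ so that $-\otimes Y$ preserves extriangles and the tensor ideal property), and then iterating in the second argument; in particular binary meets of compacts are compact.

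The main step is infinite join-distributivity: for $\cat{J}, \{\cat{I}_s\}_{s \in S} \in \Rad(\cat{A})$, only the inclusion $\cat{J} \wedge \bigvee_s \cat{I}_s \subseteq \bigvee_s (\cat{J} \wedge \cat{I}_s)$ requires work. Let $\cat{K} \coloneqq \rad(\bigcup_s (\cat{J} \cap \cat{I}_s))$, and given $Z \in \cat{J} \cap \rad(\bigcup_s \cat{I}_s)$ pick $n$ with $Z^{\otimes n} \in \thick^\otimes(\bigcup_s \cat{I}_s)$. The subcategory $\cat{M} \coloneqq \{T \in \cat{A} : Z \otimes T \in \cat{K}\}$ is a thick tensor ideal, since $Z \otimes -$ is extriangulated and $\cat{K}$ is thick; moreover for every $I \in \cat{I}_s$ one has $Z \otimes I \in \cat{J} \cap \cat{I}_s \subseteq \cat{K}$, so $\cat{M}$ contains $\bigcup_s \cat{I}_s$ and hence $\thick^\otimes(\bigcup_s \cat{I}_s)$. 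Thus $Z^{\otimes(n+1)} = Z \otimes Z^{\otimes n} \in \cat{K}$, and the radical property of $\cat{K}$ gives $Z \in \cat{K}$, as required.

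Combining these observations shows that $\Rad(\cat{A})$ is a frame. Coherence then follows immediately: the top element $\cat{A} = \rad(\unit)$ is compact (since $\unit \otimes X \cong X$ for all $X$), the compact elements are closed under binary meets by the meet formula, and every element is a join of compacts. The main obstacle is the infinite distributivity argument in the second paragraph, which (together with the meet formula) hinges essentially on the biextriangulatedness of $\otimes$: this is what ensures that tensoring with a fixed object is extriangulated, and therefore that auxiliary subcategories built from such tensorings remain thick tensor ideals.
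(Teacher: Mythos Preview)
Your proposal is correct and follows essentially the same route as the paper: both prove infinite join-distributivity by introducing an auxiliary ``tensor-absorption'' subcategory $\{T : Z \otimes T \in \cat{K}\}$ (the paper restricts it to $\bigvee S$ and calls it $\cat{J}_X$), show it is a thick tensor ideal containing each $\cat{I}_s$, and conclude using radicality; both identify the compact elements as the $\rad(X)$ via finitariness of $\thick^{\otimes}$; and both establish $\rad(X)\wedge\rad(Y)=\rad(X\otimes Y)$ to get closure of compacts under binary meets. Your treatment of the meet formula via the auxiliary ideal $\{A : A\otimes Y\in\thick^{\otimes}(X\otimes Y)\}$ is slightly more explicit than the paper's, which simply asserts $U^{\otimes 2}\in\rad(X\otimes Y)$ for $U\in\rad(X)\cap\rad(Y)$, but the underlying idea is the same.
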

\begin{proof}
We first show the infinite join-distributive law. Let $\cat{I} \in \Rad(\cat{A})$ and $S \subseteq \Rad(\cat{A})$. We show $\cat{I} \wedge \bigvee S \leq \bigvee_{\cat{J} \in S} (\cat{I} \wedge \cat{J})$, as the opposite inclusion always hold. For $X \in \cat{I} \wedge \bigvee S$ we define
\begin{equation*}
\cat{J}_X \coloneqq \set{Y \in \bigvee S | X \otimes Y \in \bigvee_{\cat{J} \in S} (\cat{I} \wedge \cat{J})}\,.
\end{equation*}
It is straightforward to check that $\cat{J}_X$ is a radical thick tensor ideal. As $\cat{J}_X \geq \cat{J}$ for all $\cat{J} \in S$ we get
\begin{equation*}
\cat{J}_X \geq \bigvee S \ni X\,.
\end{equation*}
Hence $X \otimes X \in \bigvee_{\cat{J} \in S} (\cat{I} \wedge \cat{J})$, and as $\in \bigvee_{\cat{J} \in S} (\cat{I} \wedge \cat{J})$ is radical, this means $X\in \bigvee_{\cat{J} \in S} (\cat{I} \wedge \cat{J})$. This shows $\Rad(\cat{A})$ is a frame. 
Note that the compact objects in $\Rad(\cat{A})$ are all of the form $\rad(X)$ for an object $X \in \cat{A}$. 

Let $X \in \cat{A}$ and assume $\rad(X) \subseteq \bigvee S$ for some $S \subseteq \Rad(\cat{A})$. Then $X^{\otimes n} \in \thick^\otimes(\bigcup_{\cat{J} \in S} \cat{J})$. Taking the thick tensor closure is a finite process, hence there exists a finite subset $T \subseteq S$ such that $X^{\otimes n} \in \thick^\otimes(\bigcup_{\cat{J} \in T} \cat{J})$. Hence $X \in \bigvee T$. 

For coherence it remains to show that the meet of $\rad(X)$ and $\rad(Y)$ for any $X,Y \in \cat{A}$ is $\rad(X \otimes Y)$. Since $\rad(X)$ is a radical thick tensor ideal containing $X$ it also contains $X\otimes Y$, and so $\rad(X) \geq \rad(X\otimes Y)$. Similarly $\rad(Y) \geq \rad(X\otimes Y)$. Thus $\rad(X) \wedge \rad(Y) \geq \rad(X\otimes Y)$, and we now require the reverse inclusion. Let $U \in \rad(X) \wedge \rad(Y)$. Then $U^{\otimes 2} \in \rad(X \otimes Y)$ and $U \in \rad(X \otimes Y)$ by \cref{radical_closure}. This finishes the proof.
\end{proof}

%%%%%%%%%%%%%%%%%%%%%%%%%%%%%%%%%%%%%%%%%%%%%%%%%%%%%%%%%%%
\subsection{Classification}

For the classification it remains to apply Stone and Hochster duality to \cref{Rad_coherent}. For tensor triangulated categories this result is due to Balmer \cite{Balmer:2005}. The lattice-theoretic proof relies on work by \cite{Kock/Pitsch:2017} and \cite{Buan/Krause/Solberg:2005}. However, first we describe the prime elements of $\Rad(\cat{A})$. 

\begin{lemma} 
Let $(\mathcal{A},\otimes, \unit)$ be a tensor extriangulated category such that $\Rad(\cat{A})$ is a set. A radical thick tensor ideal $\cat{P}$ is prime in $\Rad(\cat{A})$ if and only if $\cat{P}$ is a prime thick tensor ideal. 
\end{lemma}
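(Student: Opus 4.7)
The plan is to prove the two implications separately, using two facts already established in the proof of \cref{Rad_coherent}: that the meet in $\Rad(\cat{A})$ is given by set-theoretic intersection, and that $\rad(X)\wedge\rad(Y)=\rad(X\otimes Y)$ for any $X,Y\in\cat{A}$.

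For the forward direction, suppose $\cat{P}$ is meet-prime in the frame $\Rad(\cat{A})$, and let $X\otimes Y\in\cat{P}$. Since $\cat{P}$ is a radical thick tensor ideal containing $X\otimes Y$, we have $\rad(X\otimes Y)\leq\cat{P}$. Using $\rad(X)\wedge\rad(Y)=\rad(X\otimes Y)$, it follows that $\rad(X)\wedge\rad(Y)\leq\cat{P}$, and meet-primeness yields $\rad(X)\leq\cat{P}$ or $\rad(Y)\leq\cat{P}$. As $X\in\rad(X)$ and $Y\in\rad(Y)$, we conclude $X\in\cat{P}$ or $Y\in\cat{P}$, so $\cat{P}$ is a prime tensor ideal.

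For the converse, assume $\cat{P}$ is a prime thick tensor ideal in the object-wise sense, and suppose $\cat{I}\wedge\cat{J}\leq\cat{P}$ with $\cat{I}\not\leq\cat{P}$ and $\cat{J}\not\leq\cat{P}$. Pick $X\in\cat{I}\setminus\cat{P}$ and $Y\in\cat{J}\setminus\cat{P}$. Since $\cat{I}$ and $\cat{J}$ are tensor ideals, $X\otimes Y$ lies in both, hence in $\cat{I}\cap\cat{J}=\cat{I}\wedge\cat{J}\leq\cat{P}$. By object-wise primeness of $\cat{P}$ we obtain $X\in\cat{P}$ or $Y\in\cat{P}$, contradicting our choices. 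Therefore $\cat{P}$ is meet-prime in $\Rad(\cat{A})$.

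No step here looks genuinely hard; the only subtlety is making sure to invoke the identification of the frame-theoretic meet with intersection and the computation $\rad(X)\wedge\rad(Y)=\rad(X\otimes Y)$ from \cref{Rad_coherent}, both of which rest on the symmetry of $\otimes$ and on radicality.
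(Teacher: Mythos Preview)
Your proof is correct and follows essentially the same approach as the paper: the forward direction is identical, and for the converse the paper phrases it as a direct argument (fix $X\in\cat{I}\setminus\cat{P}$ and show every $Y\in\cat{J}$ lies in $\cat{P}$) rather than by contradiction, but this is only a cosmetic difference.
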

\begin{proof}
We assume $\cat{P}$ is prime as an element in the lattice $\Rad(\cat{A})$. Suppose $X\otimes Y \in \cat{P}$ for some objects $X,Y\in\cat{A}$. Then 
\begin{equation*}
\cat{P} \geq \rad(X \otimes Y) = \rad(X) \wedge \rad(Y)
\end{equation*}
using an intermediate claim proven in \cref{Rad_coherent}. By assumption, we have that $\rad(X) \leq \cat{P}$ or $\rad(Y) \leq \cat{P}$. Hence $X \in \cat{P}$ or $Y \in \cat{P}$. 

Conversely, we assume that $\cat{P}$ is a prime thick tensor ideal. Let $\cat{I},\cat{J}\in\Rad(\cat{A})$ such that $\cat{I}\cap \cat{J} \leq \cat{P}$ and $\cat{I} \not\leq \cat{P}$. We need to show $\cat{J} \leq \cat{P}$. Let $X \in \cat{I}$ with $X \notin \cat{P}$. Then $X \otimes Y \in \cat{P}$ for every $Y \in \cat{J}$. As $\cat{P}$ is a prime thick tensor ideal, this yields $Y \in \cat{P}$ for every $Y \in \cat{J}$. Hence $\cat{J} \leq \cat{P}$. 
\end{proof}

\begin{theorem}
\label{theorem-full-radical-bijection}
Let $(\mathcal{A},\otimes, \unit)$ be a tensor extriangulated category such that $\Rad(\cat{A})$ is a set. We let $\balmer{\cat{A}}$ be the set of all prime thick tensor ideals in $\cat{A}$ and endow it with a topology given by the closed sets 
\begin{equation*}
\supp(X) \coloneqq \set{\cat{P} \in \balmer{\cat{A}} | X \notin \cat{P}}
\end{equation*}
for any $X \in \cat{A}$. Then there is a bijection
\begin{gather*}
\Rad(\cat{A}) \leftrightarrow \set{\text{Thomason closed subsets of } \balmer{\cat{A}}}, \\
\cat{I} \mapsto \supp(\cat{I}) = \bigcup_{X \in \cat{I}} \supp(X) \quad \text{and} \quad \set{X \in \cat{A} | \supp(X) \subseteq V} \mapsfrom V\,.
\end{gather*}
\end{theorem}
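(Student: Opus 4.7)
The plan is to apply Stone--Hochster duality to the coherent frame $\Rad(\cat{A})$ from \cref{Rad_coherent}, following the lattice-theoretic approach of Kock and Pitsch. By \cref{Rad_coherent}, $\Rad(\cat{A})$ is a coherent frame, and by the preceding lemma its set of prime elements coincides with $\balmer{\cat{A}}$, the set of prime thick tensor ideals.

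I would then identify the topology on $\balmer{\cat{A}}$ as the Hochster dual of the Stone topology. On the spectrum $\pt(\Rad(\cat{A}))$, the hull-kernel topology has basic opens $U_{\cat{I}} = \{\cat{P} : \cat{I} \not\leq \cat{P}\}$; since $\cat{P}$ is radical, $\rad(X) \leq \cat{P}$ is equivalent to $X \in \cat{P}$, so $U_{\rad(X)} = \supp(X)$. The compact elements of $\Rad(\cat{A})$ are precisely the $\rad(X)$, hence under Stone duality the sets $\supp(X)$ form a basis of quasi-compact open sets for the hull-kernel topology. Consequently the topology on $\balmer{\cat{A}}$ described in the theorem---having the $\supp(X)$ as closed sets with quasi-compact open complement---is the Hochster dual, making $\balmer{\cat{A}}$ a spectral space. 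Under Hochster duality the open subsets of $\pt(\Rad(\cat{A}))$ correspond bijectively to the Thomason subsets of its dual, so composing Stone and Hochster dualities yields the desired bijection $\Rad(\cat{A}) \leftrightarrow \{\text{Thomason closed subsets of } \balmer{\cat{A}}\}$.

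For the explicit formulas, any $\cat{I} \in \Rad(\cat{A})$ decomposes as $\cat{I} = \bigvee_{X \in \cat{I}} \rad(X)$, and this corresponds under the duality to $\bigcup_{X \in \cat{I}} \supp(X) = \supp(\cat{I})$. Conversely, a Thomason closed subset $V$ corresponds to the radical thick tensor ideal $\{X \in \cat{A} : \supp(X) \subseteq V\}$, since $X \in \cat{I}$ is equivalent to $\rad(X) \leq \cat{I}$ (because $\cat{I}$ is radical), which under duality is equivalent to $\supp(X) \subseteq \supp(\cat{I}) = V$. The principal work---that $\Rad(\cat{A})$ is a coherent frame---has already been carried out in \cref{Rad_coherent}; the main remaining obstacle is the careful bookkeeping of topological conventions (closed versus open, $\supp$ versus complement) when invoking Hochster duality, which is standard.
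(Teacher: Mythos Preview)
Your proposal is correct and follows essentially the same route as the paper: both invoke \cref{Rad_coherent} for coherence of $\Rad(\cat{A})$, use the preceding lemma to identify prime thick tensor ideals with lattice-theoretic prime elements, and then read off the bijection from Stone and Hochster duality via $\balmer{\cat{A}} = (\pt(\Rad(\cat{A})))^\vee$. The paper's proof is in fact terser than yours---it simply records this identification of topological spaces and asserts that the classification follows from the description of the functors $\pt$ and $(-)^\vee$---so your unpacking of the topology and the explicit formulas is additional detail rather than a different argument.
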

\begin{proof}
As the prime thick tensor ideals are precisely the prime elements of $\Rad(\cat{A})$, we have
\begin{equation*}
\balmer{\cat{A}} = (\pt(\Rad(\cat{A})))^\vee
\end{equation*}
as topological spaces. The classification follows from the description of the functors $\pt$ and $(-)^\vee$.
\end{proof}

The topological space $\balmer{\cat{A}}$ is called the \emph{Balmer spectrum} of $\cat{A}$.

%%%%%%%%%%%%%%%%%%%%%%%%%%%%%%%%%%%%%%%%%%%%%%%%%%%%%%%%%%%%
\subsection{Functorality}

Let $(\sF,\beta) \colon (\cat{A},\otimes_\cat{A},\unit_\cat{A}) \to (\cat{B},\otimes_\cat{B},\unit_\cat{B})$ be a tensor extriangulated functor. Then
\begin{gather*}
\rad(\sF(\bigvee S)) = \rad(\bigcup_{X \in S} \sF(X)) = \bigvee_{X\in S} \rad(\sF(X)) \quad \text{and} \\
\rad(\sF(\rad(X) \wedge \rad(Y))) = \rad(\sF(X \otimes Y)) = \rad(\sF(X)) \wedge \rad(\sF(Y))
\end{gather*}
for any set of objects $S$ in $\cat{A}$ and objects $X$ and $Y$ in $\cat{A}$. Hence
\begin{equation*}
\rad(\sF(-)) \colon \Rad(\cat{A}) \to \Rad(\cat{B})
\end{equation*}
is a morphism of coherent frames. Applying Stone and Hochster duality we obtain a continuous map
\begin{equation*}
\balmer{\cat{B}} \to \balmer{\cat{A}} \quad \text{given by} \quad \cat{Q} \mapsto \sF^{-1}(\cat{Q})\,.
\end{equation*}

\begin{lemma} \label{surj_functor_inj_balmer}
Let $(\sF,\beta) \colon (\cat{A},\otimes_\cat{A},\unit_\cat{A}) \to (\cat{B},\otimes_\cat{B},\unit_\cat{B})$ be an essentially surjective tensor extriangulated functor. Then the induced map $\balmer{\cat{B}} \to \balmer{\cat{A}}$ is injective. If for every extriangle $X \to Y \to Z \dashrightarrow$ in $\cat{B}$ there exists an extriangle $X' \to Y' \to Z' \dashrightarrow$ with $\sF(X') \cong X$, $\sF(Y') \cong Y$ and $\sF(Z') \cong Z$, then the injective map is an embedding.
\end{lemma}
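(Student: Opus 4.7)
I would break the proof into the two assertions.

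\textbf{Injectivity.} Suppose $\cat{Q}_{1},\cat{Q}_{2}\in\balmer{\cat{B}}$ satisfy $\sF^{-1}(\cat{Q}_{1})=\sF^{-1}(\cat{Q}_{2})$, and let $Y\in\cat{Q}_{1}$. Essential surjectivity provides $X\in\cat{A}$ with $\sF(X)\cong Y$, and strict fullness of $\cat{Q}_{1}$ gives $\sF(X)\in\cat{Q}_{1}$. So $X\in\sF^{-1}(\cat{Q}_{1})=\sF^{-1}(\cat{Q}_{2})$, whence $Y\cong\sF(X)\in\cat{Q}_{2}$. The reverse inclusion is symmetric, so $\cat{Q}_{1}=\cat{Q}_{2}$.

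\textbf{Embedding.} Write $\varphi(\cat{Q})=\sF^{-1}(\cat{Q})$. This is continuous because $\varphi^{-1}(\supp(X))=\supp(\sF(X))$ for every $X\in\cat{A}$, using primeness of $\sF^{-1}(\cat{Q})$. An injective continuous map is an embedding precisely when every closed subset of the source is the $\varphi$-preimage of a closed subset of the target. Using the identities $\supp(Y_{1})\cup\supp(Y_{2})=\supp(Y_{1}\oplus Y_{2})$ and $\supp(Y_{1})\cap\supp(Y_{2})=\supp(Y_{1}\otimes Y_{2})$, every closed subset of $\balmer{\cat{B}}$ has the form $C=\bigcap_{j\in J}\supp(Y_{j})$. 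For each $Y_{j}$, essential surjectivity produces $X_{j}\in\cat{A}$ with $\sF(X_{j})\cong Y_{j}$, giving $\supp(Y_{j})=\varphi^{-1}(\supp(X_{j}))$, and hence $C=\varphi^{-1}\bigl(\bigcap_{j\in J}\supp(X_{j})\bigr)$, a $\varphi$-preimage of a closed subset of $\balmer{\cat{A}}$.

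The step I expect to require the most care, and where I expect the extriangle-lifting hypothesis to play its essential role, is the frame-theoretic recasting via Hochster duality. From this viewpoint $\varphi$ is dual to the morphism of coherent frames $\rad(\sF(-))\colon\Rad(\cat{A})\to\Rad(\cat{B})$, and the extriangle-lifting hypothesis is what guarantees that every radical thick tensor ideal $\cat{J}$ of $\cat{B}$ is recovered from $\sF^{-1}(\cat{J})$ through extriangles of $\cat{A}$ lifting those that witness thickness in $\cat{B}$. This compatibility upgrades the set-theoretic matching of basic closed sets above to a genuine embedding of spectral spaces (indeed a closed embedding), confirming that $\varphi$ is a homeomorphism onto its image in the topology inherited from $\balmer{\cat{A}}$.
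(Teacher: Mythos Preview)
Your first two paragraphs already constitute a complete and correct proof; the third paragraph is unnecessary and should be deleted. Your point-set argument for the embedding is not merely a ``set-theoretic matching of basic closed sets'': once you know that every closed subset of $\balmer{\cat{B}}$ is an intersection $\bigcap_{j}\supp(Y_{j})$ and that each $\supp(Y_{j})=\varphi^{-1}(\supp(X_{j}))$, the identity $\bigcap_{j}\varphi^{-1}(\supp(X_{j}))=\varphi^{-1}\bigl(\bigcap_{j}\supp(X_{j})\bigr)$ finishes the job. Nothing further is required, and in particular you never invoke the extriangle-lifting hypothesis --- your argument shows that essential surjectivity alone already forces the map to be an embedding.

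By contrast, the paper argues through the frame picture: it reduces the embedding claim to the biconditional $\cat{I}\subseteq\sF^{-1}(\cat{Q})\Leftrightarrow\rad(\sF(\cat{I}))\subseteq\cat{Q}$ for all $\cat{I}\in\Rad(\cat{A})$ and $\cat{Q}\in\balmer{\cat{B}}$, and invokes the extriangle-lifting hypothesis to obtain $\rad(\sF(\cat{I}))=\sF(\cat{I})$. Your direct topological route is more elementary and sidesteps that detour entirely; the price is only that it is less tied to the lattice-theoretic machinery used elsewhere in the section. Either way, drop the third paragraph: it muddies an argument that is already finished.
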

\begin{proof}
The injectivity is clear from the definition of the induced map. 

For the second claim it is enough to show $\cat{I} \subseteq \sF^{-1}(\cat{Q})$ if and only if $\rad(\sF(\cat{I})) \subseteq \cat{Q}$ for any radical thick tensor ideal $\cat{I} \subseteq \cat{A}$ and any $\cat{Q} \in \balmer{\cat{B}}$. By assumption we have $\rad(\sF(\cat{I})) = \sF(\cat{I})$. Hence the forward direction holds. The backward direction is clear.
\end{proof}

The Balmer spectrum has been computed for many tensor triangulated categories, like the perfect complexes of a commutative ring \cite{Hopkins:1987,Neeman:1992b,Thomason:1997} and the stable module category of a group algebra \cite{Benson/Carlson/Rickard:1997}. In the following we describe the Balmer spectrum of some of the examples from \cref{sec:example_tec}. 

%%%%%%%%%%%%%%%%%%%%%%%%%%%%%%%%%%%%%%%%%%%%%%%%%%%%%%%%%%%%
\subsection{Projective modules} \label{teg_projective_modules}

Let $R$ be a commutative ring with local units. In \cref{example-local-units-flat} we saw that the category of flat modules $\Flat{R}$ is a tensor extriangulated category. This category need not be essentially small. However, the subcategory of finitely presented flat modules is essentially small and so we may consider its Balmer spectrum. In fact, a module is finitely presented and flat if and only if it is finitely presented and projective. In the following we compute the Balmer spectrum of $\proj{R}$ for some classes of commutative rings $R$.

Note that the exact structure on $\proj{R}$ is the split exact structure, so the radical thick tensor ideals are precisely the radical tensor ideals that are closed under finite coproducts and direct summands.

\begin{lemma} \label{proj_BSpec_point}
Let $R$ be a commutative ring that satisfies one of the following conditions: 
\begin{enumerate}
\item \label{proj_BSpec_point:polynomial} $R=S[x_{1},\dots,x_{n}]$, the polynomial ring over a  principal ideal domain $S$; 
\item \label{proj_BSpec_point:local} $R$ is a local ring; or
\item \label{proj_BSpec_point:Dedekind} $R$ is a Dedekind domain.
\end{enumerate}
Then $\balmer{\proj{R}} = \{\rad(0)\}$; that is the Balmer spectrum consists of one point. 
\end{lemma}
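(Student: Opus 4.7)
The plan is to show that in each of the three cases there are exactly two radical thick tensor ideals, namely $\rad(0) = \{0\}$ (the zero objects) and $\proj{R}$ itself, and that only the former is meet-prime in the frame $\Rad(\proj{R})$. The single point of $\balmer{\proj{R}}$ then corresponds to $\rad(0)$ via \cref{theorem-full-radical-bijection}.

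First, I will recall/establish the structure of finitely generated projective $R$-modules in each case: in (\ref{proj_BSpec_point:polynomial}) every finitely generated projective is free by the Quillen--Suslin theorem in its version for polynomial rings over a principal ideal domain; in (\ref{proj_BSpec_point:local}) every finitely generated projective is free because $R$ is local; in (\ref{proj_BSpec_point:Dedekind}) every finitely generated projective of positive rank $n$ is isomorphic to $R^{n-1} \oplus I$ for some invertible ideal $I$, by the standard structure theorem over a Dedekind domain. In the first two cases any nonzero $P \in \proj{R}$ is isomorphic to $R^n$ with $n \geqslant 1$, so $R$ is a direct summand of $P$. In case (\ref{proj_BSpec_point:Dedekind}), either $n \geqslant 2$ (so $R$ is a summand of $P$) or $P \cong I$, and then $I \otimes_R I^{-1} \cong R$ lies in the tensor ideal generated by $I$, so again $R$ belongs to $\rad(P)$. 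Thus, in all three cases, any nonzero $P \in \proj{R}$ satisfies $R \in \rad(P)$, and since $R$ is the tensor unit this forces $\rad(P) = \proj{R}$.

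Next, I will identify $\rad(0)$ with $\{0\}$. By \cref{radical_closure}, an object $P$ lies in $\rad(0)$ if and only if some tensor power $P^{\otimes n}$ lies in $\thick^\otimes(\{0\})$; but the latter consists only of zero objects, since $\{0\}$ is already closed under direct summands and extensions in $\proj{R}$ (which carries the split exact structure). So we need $P^{\otimes n} = 0$. For $P \cong R^m$ (cases (\ref{proj_BSpec_point:polynomial}) and (\ref{proj_BSpec_point:local})) this forces $m = 0$, and for $P \cong R^{n-1} \oplus I$ (case (\ref{proj_BSpec_point:Dedekind})) the rank of $P^{\otimes n}$ is the $n$-th power of the rank of $P$, which vanishes only if $P$ has rank $0$, i.e.\ $P = 0$. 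Hence $\rad(0) = \{0\}$, and combined with the previous paragraph there are exactly two radical thick tensor ideals: $\{0\}$ and $\proj{R}$.

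Finally, I will verify that $\rad(0) = \{0\}$ is prime while $\proj{R}$ is not. For primality of $\{0\}$: if $X \otimes_R Y = 0$ for $X, Y \in \proj{R}$, the rank argument just used (or in (\ref{proj_BSpec_point:local}) Nakayama on $X \otimes_R k$ where $k$ is the residue field) gives $X = 0$ or $Y = 0$. The top element $\proj{R}$ fails to be meet-prime in $\Rad(\proj{R})$ as the greatest element of a lattice, consistent with the identification of prime thick tensor ideals with frame-theoretic primes noted just before \cref{theorem-full-radical-bijection}. Hence $\balmer{\proj{R}} = \{\rad(0)\}$. The main point requiring care is the Dedekind case, where the invertibility of ideals (rather than freeness) must be exploited; the key trick $I \otimes I^{-1} \cong R$ is what keeps the tensor ideal generated by an invertible ideal from being proper.
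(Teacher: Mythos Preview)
Your proof is correct and matches the paper's approach in cases~(\ref{proj_BSpec_point:polynomial}) and~(\ref{proj_BSpec_point:local}), invoking Quillen--Suslin and Kaplansky respectively to reduce to free modules. In case~(\ref{proj_BSpec_point:Dedekind}) you take a genuinely different route: the paper uses the isomorphism $J \oplus I^{2} \cong JI \oplus I$ (an exercise in Eisenbud) together with $JI \cong J \otimes_R I$ to show that any nonzero ideal $J$ lies in $\thick^{\otimes}(I)$, whereas you exploit invertibility directly via $I \otimes_R I^{-1} \cong R$, placing the tensor unit in the ideal generated by $I$. Your argument is shorter and more conceptual---it only needs that rank-one projectives form the Picard group---while the paper's argument stays within integral ideals and proves the slightly stronger statement that every ideal lies in the thick tensor closure of every other. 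You also spell out explicitly that $\rad(0)=\{0\}$ and that $\{0\}$ is prime via a rank argument, both of which the paper leaves implicit.
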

\begin{proof}
In the cases \cref{proj_BSpec_point:polynomial,proj_BSpec_point:local} every finitely generated projective $R$-module is free; for \cref{proj_BSpec_point:polynomial} this follows from the Quillen--Suslin theorem and for \cref{proj_BSpec_point:local} from Kaplansky's theorem; see for example \cite[Chapter~XXI, Theorem~3.7]{Lang:2002} and \cite[Theorem~2]{Kaplansky:1958}, respectively. Hence there are exactly two radical thick tensor ideals: $\rad(0)$ and $\proj{R}$, and $\rad(0)$ is the only prime thick tensor ideal.

In the case \cref{proj_BSpec_point:Dedekind} the finitely projective $R$-modules are precisely the torsion-free modules. In particular, any ideal is projective. Let $I$ and $J$ be non-zero ideals of $R$. By \cite[Exercise~19.5]{Eisenbud:1995}, we obtain $J \oplus I^2 \cong JI \oplus I$. Since $I$ is projective, we have $JI \cong J \otimes I$. This means $J \in \thick(I)$. Furthermore, any finitely generated projective $R$-module is of the form $R^{n} \oplus I$ for some ideal $I$; see \cite[Exercise~19.6]{Eisenbud:1995}. So $\proj{R}$ has two radical thick tensor ideals, namely $\rad(0)$ and $\proj{R}$, and $\rad(0)$ is the only prime thick tensor ideal.
\end{proof}

\begin{lemma} \label{proj_BSpec_product}
Let $R = R_1 \times \ldots \times R_n$ be a product of commutative rings. Then
\begin{equation*}
\balmer{\proj{R}} = \balmer{\proj{R_1}} \times \ldots \times \balmer{\proj{R_n}}
\end{equation*}
with the product topology. 
\end{lemma}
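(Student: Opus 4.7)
The plan is to exploit the ring decomposition $R = R_1 \times \cdots \times R_n$ by means of its central orthogonal idempotents $e_1, \ldots, e_n$. Every finitely generated projective $R$-module $M$ decomposes canonically as $M = \bigoplus_i e_i M$, where each summand $e_i M$ is a finitely generated projective $R_i$-module viewed as an $R$-module through the projection $\pi_i\colon R \twoheadrightarrow R_i$. Since $e_i R \otimes_R e_j R = 0$ for $i \neq j$, the tensor product respects this decomposition: $M \otimes_R N \cong \bigoplus_i (e_i M \otimes_{R_i} e_i N)$. Consequently, $(\proj{R}, \otimes_R, R)$ is equivalent, as a tensor extriangulated category with the split exact structure, to the product $\prod_i \proj{R_i}$ equipped with componentwise tensor product and unit $(R_1, \ldots, R_n)$.

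Next, I would construct a candidate comparison map $\Phi \colon \balmer{\proj{R}} \to \prod_i \balmer{\proj{R_i}}$. For each prime thick tensor ideal $\cat{P} \in \balmer{\proj{R}}$ and each $i$, set $\cat{P}_i = \{N \in \proj{R_i} \,:\, N \text{ viewed as an $R$-module with } e_j \text{ acting as zero for } j \neq i \text{ lies in } \cat{P}\}$. One verifies that $\cat{P}_i$ is a thick tensor ideal of $\proj{R_i}$. The projections $\pi_i$ give tensor extriangulated functors $-\otimes_R R_i\colon \proj{R}\to \proj{R_i}$, and I would use the functoriality of the Balmer spectrum discussed before \cref{surj_functor_inj_balmer} to transport primes. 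Conversely, from an $n$-tuple $(\cat{P}_1,\dots,\cat{P}_n)$ I would build a candidate subcategory of $\proj{R}$ via the componentwise decomposition of objects.

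The last step is to verify that $\Phi$ is a homeomorphism when the target carries the product topology. This reduces to comparing the basic closed sets $\supp(M) \subseteq \balmer{\proj{R}}$ with rectangular closed sets of the form $\prod_i \supp(e_i M) \subseteq \prod_i \balmer{\proj{R_i}}$, using the componentwise decomposition $M = \bigoplus_i e_i M$ and the description of $\supp$ from \cref{theorem-full-radical-bijection}.

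The main obstacle is the behaviour of primeness under the decomposition $\proj{R} \cong \prod_i \proj{R_i}$. The relation $e_i R \otimes_R e_j R = 0$ for $i \neq j$ forces any prime $\cat{P}$ to contain certain idempotent modules $e_i R$, and these containments interact with the componentwise structure in a subtle combinatorial way: a naive Cartesian product $\cat{P}_1 \times \cdots \times \cat{P}_n$ of primes is generally not itself a prime of $\proj{R}$, because $e_i R \otimes_R e_j R = 0 \in \cat{P}$ does not imply that one of $e_i R, e_j R$ already lies in the candidate ideal. Reconciling this with the Cartesian-product-with-product-topology description of $\prod_i \balmer{\proj{R_i}}$ is therefore the crux of the proof, and will require a careful bookkeeping of which idempotents a given prime contains in order to identify $\Phi$ as a well-defined bijection and then upgrade it to a homeomorphism.
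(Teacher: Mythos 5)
Your opening reduction is the same as the paper's: decompose every finitely generated projective $R$-module along the central idempotents $e_1,\dots,e_n$, so that $\proj{R}$ is identified with the product of the $\proj{R_i}$ as a tensor extriangulated category with the split exact structure. From there the paper takes a different route than you propose: instead of transporting primes directly, it exhibits an isomorphism of coherent frames $\Rad(\proj{R}) \cong \Rad(\proj{R_1}) \times \cdots \times \Rad(\proj{R_n})$, sending $\cat{I}$ to the tuple $(R_1 \otimes_R \cat{I},\dots,R_n \otimes_R \cat{I})$ with inverse $(\cat{I}_1,\dots,\cat{I}_n) \mapsto \rad(\bigcup_i \cat{I}_i)$, and then applies $\pt$ together with Hochster duality. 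This sidesteps entirely the question of which tuples of ideals are prime.

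The obstacle you flag in your last paragraph is not a bookkeeping problem to be worked around; it is fatal to the formulation you set out to prove. Since $e_iR \otimes_R e_jR = 0$ lies in every prime $\cat{P}$, primality forces $e_iR \in \cat{P}$ for all but one index $i_0$ (all of them would give $R = \bigoplus_i e_iR \in \cat{P}$), and then $\cat{P} = \set{M | e_{i_0}M \in \cat{Q}}$ for a unique prime $\cat{Q}$ of $\proj{R_{i_0}}$. Hence $\balmer{\proj{R}}$ is the \emph{disjoint union} $\coprod_i \balmer{\proj{R_i}}$, not the Cartesian product: already for $R = k \times k$ one gets a two-point spectrum, whereas the product of two one-point spaces is a single point. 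This is exactly what the frame-theoretic argument yields, because under Stone duality a product of frames corresponds to a coproduct of spaces, and it is what \cref{proj_BSpec_reg_Kdim1} actually uses when it counts $\vert\balmer{\proj{R}}\vert = n$. So the ``$\times$ \dots\ with the product topology'' in the statement must be read as the coproduct; the homeomorphism $\Phi$ onto the genuine topological product that your second and third paragraphs aim to construct does not exist once some factor has more than one prime or the spectra are compared by cardinality.
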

\begin{proof}
By induction it is enough to show the claim for $n=2$. Let $R = S \times T$. As an $R$-module $R \cong S \oplus T$ and every $R$-module decomposes as $M \cong (S \otimes_R M) \oplus (T \otimes_R M)$. In particular, $M$ is a finitely presented projective $R$-module if and only if $S \otimes_R M$ and $T \otimes_R M$ are finitely presented projective over $S$ and $T$, respectively. The map
\begin{equation*}
\begin{gathered}
\Rad(\proj{R}) \to \Rad(\proj{S}) \times \Rad(\proj{T}) \,, \\
\cat{I} \mapsto \set{S \otimes_R M | M \in \cat{I}} \times \set{T \otimes_R M | M \in \cat{I}}
\end{gathered}
\end{equation*}
has an inverse given by
\begin{equation*}
\rad(\cat{I}\cup \cat{J}) \mapsfrom \cat{I} \times \cat{J}
\end{equation*}
where we view the objects in $\cat{I}$ and $\cat{J}$ as $R$-modules using restriction. This bijection is an isomorphism of coherent frames and hence it yields the desired isomorphism of the Balmer spectra.
\end{proof}

Note that a commutative noetherian ring is hereditary if and only if it is regular of Krull dimension 1; see for example \cite[Corollary~3]{Endo:1961}. For such rings \cref{proj_BSpec_reg_Kdim1} describes the Balmer spectrum. 

\begin{corollary} \label{proj_BSpec_reg_Kdim1}
If $R$ is a commutative hereditary noetherian ring, then  the Balmer spectrum $\balmer{\proj{R}}$ is the finite set of connected components of the Zariski spectrum $\ZSpec{R}$ equipped with the discrete topology, and $\vert \balmer{\proj{R}}\vert$ is equal to the number of indecomposable factors of $R$.
\end{corollary}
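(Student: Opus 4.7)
The plan is to reduce to the two preceding \namecref{proj_BSpec_point}s by decomposing $R$ as a finite product of its indecomposable factors and showing each factor falls under \cref{proj_BSpec_point}. Concretely, I would first invoke the classical structure theorem for commutative hereditary noetherian rings: since $R$ is hereditary noetherian, it is regular of Krull dimension at most $1$, and as a noetherian ring its Zariski spectrum $\ZSpec{R}$ has only finitely many connected components. The idempotents of $R$ are in bijection with the clopen subsets of $\ZSpec{R}$, so $R$ decomposes as a finite product
\begin{equation*}
R = R_1 \times \cdots \times R_n
\end{equation*}
where each $R_i$ is indecomposable and $\ZSpec{R_i}$ is the $i$-th connected component of $\ZSpec{R}$.

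Next I would verify that each factor $R_i$ is a Dedekind domain or a field. Each $R_i$ is hereditary noetherian (as a direct factor of such a ring) and hence regular of Krull dimension at most $1$. Because regular local rings are domains, localising $R_i$ at any prime gives a domain, which forces the minimal primes of $R_i$ to be pairwise incomparable with disjoint closures; combined with the connectedness of $\ZSpec{R_i}$, this forces $R_i$ to have a unique minimal prime and therefore to be a domain. Depending on whether $\dim R_i = 0$ or $1$, the factor $R_i$ is then either a field or a Dedekind domain.

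Having this decomposition, applying \cref{proj_BSpec_product} gives
\begin{equation*}
\balmer{\proj{R}} \cong \balmer{\proj{R_1}} \times \cdots \times \balmer{\proj{R_n}},
\end{equation*}
and by \cref{proj_BSpec_point}\cref{proj_BSpec_point:local,proj_BSpec_point:Dedekind} each factor on the right is a one-point space. Thus the product is a finite discrete space of cardinality $n$, where $n$ is the number of indecomposable factors of $R$, equivalently the number of connected components of $\ZSpec{R}$ (since each $\ZSpec{R_i}$ is connected and $\ZSpec{R} = \bigsqcup_i \ZSpec{R_i}$ as topological spaces).

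The main obstacle is the standard but non-trivial structural claim that each indecomposable factor $R_i$ is a domain; the rest is a direct assembly of \cref{proj_BSpec_product,proj_BSpec_point}. This step rests on the fact that a connected regular noetherian ring is automatically a domain because its irreducible and connected components coincide, which I would cite from standard commutative algebra rather than reprove.
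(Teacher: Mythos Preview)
Your proposal is correct and follows essentially the same route as the paper: decompose $R$ as a finite product of Dedekind domains and fields, then invoke \cref{proj_BSpec_point,proj_BSpec_product}. The only difference is that the paper obtains this decomposition by citing a structure theorem (Bergman, Chatters--J{\o}ndrup) for hereditary noetherian rings, whereas you sketch the argument directly via the standard fact that a connected regular noetherian ring is a domain.
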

\begin{proof}
Note that $R$ cannot contain an infinite set of primitive idempotents since it is noetherian; see for example \cite[Proposition 22.2]{Lam:2001}. By \cite[Theorem~4.4]{Bergman:1971}, see also \cite[p.~44,~Corollary~1]{Chatters-Jondrup:1983}, the ring $R$ is a finite product of Dedekind domains and fields. Hence the claim follows from \cref{proj_BSpec_point,proj_BSpec_product}. 
\end{proof}

%%%%%%%%%%%%%%%%%%%%%%%%%%%%%%%%%%%%%%%%%%%%%%%%%%%%%%%%%%%%%
\subsection{Stabilisation of extriangulated categories} \label{teg_stabilisation}

We discussed the stabilisation of a tensor extriangulated category in \cref{subsec:stabilisation_ttc}. In the stabilisation $\underline{\cat{A}}_\cat{I}$ the objects in $\cat{I}$ are isomorphic to $0$. Hence there are fewer radical thick tensor ideals in $\underline{\cat{A}}_\cat{I}$ than in $\cat{A}$. In fact, the identity induces a tensor extriangulated functor $\cat{A} \to \underline{\cat{A}}_\cat{I}$, and as a direct consequence of \cref{surj_functor_inj_balmer} we obtain:

\begin{lemma}
Let $(\cat{A},\otimes,\unit)$ be a tensor extriangulated category and $\cat{I}$ a tensor ideal consisting of objects that are $\BE$-projective and $\BE$-injective. Then
\begin{equation*}
\balmer{\underline{\cat{A}}_\cat{I}} = \set{\cat{P} \in \balmer{\cat{A}} | \cat{P} \supseteq \cat{I}}
\end{equation*}
with the subspace topology. \qed
\end{lemma}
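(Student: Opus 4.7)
The plan is to apply \cref{surj_functor_inj_balmer} to the canonical quotient functor $Q \colon \cat{A} \to \underline{\cat{A}}_\cat{I}$, and then identify the resulting image with $\set{\cat{P} \in \balmer{\cat{A}} | \cat{I} \subseteq \cat{P}}$. First I would observe that $Q$, which is the identity on objects, is a tensor extriangulated functor by \cref{lemma:stabilisation_extriangulated}, and is evidently essentially surjective. By the very construction of $(\stab{\BE}, \stab{\fs})$, every extriangle in $\underline{\cat{A}}_\cat{I}$ is the image under $Q$ of an extriangle in $\cat{A}$ with the same constituent objects, so the lifting hypothesis for the embedding conclusion of \cref{surj_functor_inj_balmer} is satisfied. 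I conclude that
\[
\balmer{\underline{\cat{A}}_\cat{I}} \hookrightarrow \balmer{\cat{A}}\,,\qquad \cat{Q} \longmapsto Q^{-1}(\cat{Q})\,,
\]
is a topological embedding, which already gives the subspace topology on the image for free.

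Next I would pin down the image. One containment is immediate: every object of $\cat{I}$ becomes zero in $\underline{\cat{A}}_\cat{I}$ and hence belongs to every prime there, so $\cat{I} \subseteq Q^{-1}(\cat{Q})$ for every $\cat{Q} \in \balmer{\underline{\cat{A}}_\cat{I}}$. For the reverse, given $\cat{P} \in \balmer{\cat{A}}$ with $\cat{I} \subseteq \cat{P}$, I would define $\underline{\cat{P}}$ to be the full subcategory of $\underline{\cat{A}}_\cat{I}$ on the objects of $\cat{P}$, and verify that $\underline{\cat{P}}$ is a prime thick tensor ideal of $\underline{\cat{A}}_\cat{I}$. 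The tensor ideal, 2-out-of-3, and primality conditions descend directly from $\cat{P}$, since tensor products and extriangles in the stabilisation are inherited from $\cat{A}$ at the level of objects. Once this is done, the equality $Q^{-1}(\underline{\cat{P}}) = \cat{P}$ exhibits $\cat{P}$ as lying in the image.

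The only point requiring some care, and hence the main obstacle, is closure of $\underline{\cat{P}}$ under isomorphism and direct summands in $\underline{\cat{A}}_\cat{I}$. For this I would invoke the standard feature of stabilisations by subcategories of $\BE$-projective-injective objects: if $X \cong Y$ in $\underline{\cat{A}}_\cat{I}$, then $X \oplus I_1 \cong Y \oplus I_2$ in $\cat{A}$ for some $I_1, I_2 \in \cat{I}$. This is obtained by lifting mutually inverse morphisms $f \colon X \to Y$ and $g \colon Y \to X$ to $\cat{A}$ and factoring the discrepancies $\id_X - gf$ and $\id_Y - fg$ through objects of $\cat{I}$, then assembling the familiar $2 \times 2$ block isomorphism. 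Combined with the hypothesis $\cat{I} \subseteq \cat{P}$ and closure of $\cat{P}$ under direct summands in $\cat{A}$, this gives $X \in \cat{P} \Longleftrightarrow Y \in \cat{P}$, as required.
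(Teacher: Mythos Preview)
Your proof is correct and follows precisely the route the paper indicates: the paper gives no argument beyond the sentence preceding the lemma, which notes that the identity induces a tensor extriangulated functor $\cat{A} \to \underline{\cat{A}}_\cat{I}$ and that the result is then a direct consequence of \cref{surj_functor_inj_balmer}, after which the lemma is simply marked \qed. You have supplied exactly the details the authors leave implicit, including the identification of the image; one small caution is that the literal isomorphism $X \oplus I_1 \cong Y \oplus I_2$ in $\cat{A}$ may need idempotent completeness, but the weaker fact that $Y$ is a retract of $X \oplus I$ for some $I \in \cat{I}$ (which your setup gives immediately) already yields $Y \in \cat{P}$.
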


\begin{example} \label{teg_hopf}
Let $H$ be a cocommutative Hopf algebra over a field $k$. We assume that $H$ is finite-dimensional over $k$. Then the category finite-dimensional $H$-modules is essentially small, and by \cref{example_Hopf} it becomes a tensor extriangulated category with $(\otimes_k,k)$. Further, $\mod{H}$ is a Frobenius exact category, and there exists exactly one non-trivial thick tensor ideal consisting of objects that are projective and injective. We denote by $\smod{H}$ the stabilisation of $\mod{H}$ with respect to this tensor ideal. Hence under the induced morphism of coherent frames
\begin{equation*}
\Rad(\mod{H}) \to \Rad(\smod{H})
\end{equation*}
every ideal has precisely one pre-image except the zero ideal in $\smod{H}$, which has exactly two pre-images: the zero ideal and the ideal of all projective objects.

Let $G$ be a finite group. By \cite{Benson/Carlson/Rickard:1997}, 
\begin{equation*}
\balmer{\smod{kG}} = \Proj{\ch{G,k}}\,,
\end{equation*}
where $\ch{G,k}$ is the group cohomology of $G$ and $\Proj{\ch{G,k}}$ the set of homogenous primes without the irrelevant ideal with the Zariski topology. Then
\begin{equation*}
\balmer{\mod{kG}} = \Spec{\ch{G,k}}
\end{equation*}
where the latter is the set of all homogenous prime ideals; also see \cite[Example~17]{Krause:2024}. 
\end{example}

\begin{example}
Let $(\cat{A},\otimes,\unit)$ be an additive monoidal category and $\ccat{\cat{A}}$ the category of chain complexes. The degree-wise split exact sequences endow $\ccat{\cat{A}}$ with an exact structure. It is straightforward to check that $\ccat{\cat{A}}$ is a tensor extriangulated category with the tensor product induced by $\otimes$ and the unit the complex with $\unit$ concentrated in degree zero. Moreover, the exact structure on $\ccat{\cat{A}}$ is Frobenius and the projective objects are precisely the complexes of the form $\cone(\id_X)$ for any complex $X$; these are precisely the contractible complexes. 

We endow $\cat{A}$ with the split exact structure. Then there is a bijection
\begin{gather*}
\Rad(\cat{A}) \to \set{\cat{J} \in \Rad(\ccat{\cat{A}}) | \cat{J} \subseteq \Proj{\ccat{\cat{A}}}} \\
\text{given by} \quad \cat{I} \mapsto \rad(\set{\cone(\id_M) | M \in \cat{I}})\,.
\end{gather*}
Hence there may exist indeterminate extriangulated stabilisations between the category of chain complexes $\ccat{\cat{A}}$ and its homotopy category 
\begin{equation*}
\kcat{\cat{A}} = \ccat{\cat{A}}/\Proj{\ccat{\cat{A}}}\,;
\end{equation*}
compare with  \cref{proj_BSpec_product,proj_BSpec_reg_Kdim1}.
\end{example}

\bibliographystyle{amsalpha}
\bibliography{References}

\providecommand{\bysame}{\leavevmode\hbox to3em{\hrulefill}\thinspace}
\providecommand{\MR}{\relax\ifhmode\unskip\space\fi MR }
% \MRhref is called by the amsart/book/proc definition of \MR.
\providecommand{\MRhref}[2]{%
  \href{http://www.ams.org/mathscinet-getitem?mr=#1}{#2}
}
\providecommand{\href}[2]{#2}
\begin{thebibliography}{BTHSS23}

\bibitem[Bal05]{Balmer:2005}
Paul Balmer, \emph{The spectrum of prime ideals in tensor triangulated
  categories}, J. Reine Angew. Math. \textbf{588} (2005), 149--168.
  \MR{2196732}

\bibitem[Bal10a]{Balmer-spectra:2010}
\bysame, \emph{Spectra, spectra, spectra---tensor triangular spectra versus
  {Z}ariski spectra of endomorphism rings}, Algebr. Geom. Topol. \textbf{10}
  (2010), no.~3, 1521--1563. \MR{2661535}

\bibitem[Bal10b]{Balmer:2010}
\bysame, \emph{Tensor triangular geometry}, Proceedings of the {I}ntl. {C}ongr.
  of {M}athematicians. {V}ol. {II} (Rajendra Bhatia, Arup Pal, Govindan
  Rangarajan, Vasudevan Srinivas, and Muthusamy Vanninathan, eds.), Hindustan
  Book Agency, New Delhi, 2010, pp.~85--112. \MR{2827786}

\bibitem[BCR97]{Benson/Carlson/Rickard:1997}
David~J. Benson, Jon~F. Carlson, and Jeremy Rickard, \emph{Thick subcategories
  of the stable module category}, Fund. Math. \textbf{153} (1997), no.~1,
  59--80. \MR{1450996}

\bibitem[Ber71]{Bergman:1971}
George~M. Bergman, \emph{Hereditary commutative rings and centres of hereditary
  rings}, Proc. London Math. Soc. (3) \textbf{23} (1971), 214--236. \MR{309918}

\bibitem[BKS05]{Buan/Krause/Solberg:2005}
Aslak~Bakke Buan, Henning Krause, and {\O}yvind Solberg, \emph{Support
  varieties: an ideal approach}, Homology, Homotopy and Applications \textbf{9}
  (2005), 45--74.

\bibitem[BTHSS23]{BenTenHaugSandShah}
Raphael Bennett-Tennenhaus, Johanne Haugland, Mads~Hustad Sand{\o}y, and Amit
  Shah, \emph{The category of extensions and a characterisation of
  {$n$}-exangulated functors}, Math. Z. \textbf{305} (2023), no.~3, 44.
  \MR{4657447}

\bibitem[BTS21]{BennettTennenhaus/Shah:2021}
Raphael Bennett-Tennenhaus and Amit Shah, \emph{Transport of structure in
  higher homological algebra}, J. Algebra \textbf{574} (2021), 514--549.
  \MR{4213630}

\bibitem[B{\"u}h10]{Buehler:2010}
Theo B{\"u}hler, \emph{Exact categories}, Expo. Math. \textbf{28} (2010),
  no.~1, 1--69. \MR{2606234}

\bibitem[CJ83]{Chatters-Jondrup:1983}
A.~W. Chatters and S{\o}ren J{\o}ndrup, \emph{Hereditary finitely generated
  {P}.{I}. algebras}, J. Algebra \textbf{82} (1983), no.~1, 40--52. \MR{701035}

\bibitem[EGNO15]{Etingof/Gelaki/Nikshych/Ostrik:2015}
Pavel Etingof, Shlomo Gelaki, Dmitri Nikshych, and Victor Ostrik, \emph{Tensor
  categories}, Math. Surveys Monogr., vol. 205, American Mathematical Society,
  Providence, RI, 2015. \MR{3242743}

\bibitem[Eis80]{Eisenbud:1980}
David Eisenbud, \emph{Homological algebra on a complete intersection, with an
  application to group representations}, Trans. Amer. Math. Soc. \textbf{260}
  (1980), no.~1, 35--64. \MR{0570778}

\bibitem[Eis95]{Eisenbud:1995}
\bysame, \emph{Commutative algebra with a view toward algebraic geometry},
  Grad. Texts in Math., vol. 150, Springer-Verlag, New York, 1995. \MR{1322960}

\bibitem[EK66]{Eilenberg/Kelly:1966}
Samuel Eilenberg and G.~Maxwell Kelly, \emph{A generalization of the functorial
  calculus}, J. Algebra \textbf{3} (1966), 366--375. \MR{190204}

\bibitem[EK20]{ElKaoutit:2020}
Laiachi El~Kaoutit, \emph{B{OCSES} over small linear categories and corings},
  Associative and non-associative algebras and applications, Proc. of the 3rd
  {M}oroccan {A}ndalusian {M}eeting on {A}lgebras and their {A}pplications
  (Mercedes Siles~Molina, Laiachi El~Kaoutit, Mohamed Louzari, L'Moufadal
  Ben~Yakoub, and Mohamed Benslimane, eds.), Springer Proc. Math. Stat., vol.
  311, Springer, Cham, 2020, pp.~273--286. \MR{4061926}

\bibitem[EM45]{Eilenberg-Maclane:1945}
Samuel Eilenberg and Saunders MacLane, \emph{General theory of natural
  equivalences}, Trans. Amer. Math. Soc. \textbf{58} (1945), 231--294.
  \MR{13131}

\bibitem[End61]{Endo:1961}
Shizuo Endo, \emph{On semi-hereditary rings}, J. Math. Soc. Japan \textbf{13}
  (1961), 109--119. \MR{138664}

\bibitem[EO04]{Etingof-Ostrik:2004}
Pavel Etingof and Viktor Ostrik, \emph{Finite tensor categories}, Mosc. Math.
  J. \textbf{4} (2004), no.~3, 627--654, 782--783. \MR{2119143}

\bibitem[Fis68]{Fisher:1968}
Janet~L. Fisher, \emph{The tensor product of functors; satellites; and derived
  functors}, J. Algebra \textbf{8} (1968), 277--294. \MR{237593}

\bibitem[Fom22]{Fomatati:2022}
Yves~Baudelaire Fomatati, \emph{On tensor products of matrix factorizations},
  J. Algebra \textbf{609} (2022), 180--216. \MR{4455394}

\bibitem[Gab62]{Gabriel:1962}
Pierre Gabriel, \emph{Des cat{\'e}gories ab{\'e}liennes}, Bull. Soc. Math.
  France \textbf{90} (1962), 323--448. \MR{232821}

\bibitem[GMH93]{Garcia--Martinez-Hernandez:1992}
Jos{\'e}~L. Garc{\'i}a and Juan Mart{\'i}nez~Hern{\'a}ndez, \emph{Purity
  through {G}abriel's functor rings}, Bull. Soc. Math. Belg. S\'er. A
  \textbf{45} (1993), no.~1-2, 137--152. \MR{1316238}

\bibitem[GNP21]{Gorsky/Nakaoka/Palu:2021}
Mikhail Gorsky, Hiroyuki Nakaoka, and Yann Palu, \emph{Positive and negative
  extensions in extriangulated categories}, arXiv e-prints, 2021,
  arXiv:2103.12482v1, pp.~1--51.

\bibitem[GP05]{Garkusha/Prest:2005}
Grigory Garkusha and Mike Prest, \emph{Triangulated categories and the ziegler
  spectrum}, Algebras and Representation Theory \textbf{8} (2005), 499--523.

\bibitem[Hap88]{Happel:1988}
Dieter Happel, \emph{Triangulated categories in the representation theory of
  finite-dimensional algebras}, London Math. Soc. Lecture Note Ser., vol. 119,
  Cambridge University Press, Cambridge, 1988. \MR{0935124}

\bibitem[HLN21]{Herschend/Liu/Nakaoka:2021}
Martin Herschend, Yu~Liu, and Hiroyuki Nakaoka, \emph{{$n$}-exangulated
  categories ({I}): {D}efinitions and fundamental properties}, J. Algebra
  \textbf{570} (2021), 531--586. \MR{4188310}

\bibitem[HLN22]{Herschend/Liu/Nakaoka:2022}
\bysame, \emph{{$n$}-exangulated categories ({II}): {C}onstructions from
  {$n$}-cluster tilting subcategories}, J. Algebra \textbf{594} (2022),
  636--684. \MR{4355116}

\bibitem[Hoc69]{Hochster:1969}
Melvin Hochster, \emph{Prime ideal structure in commutative rings}, Trans.
  Amer. Math. Soc. \textbf{142} (1969), 43--60. \MR{251026}

\bibitem[Hop87]{Hopkins:1987}
Michael~J. Hopkins, \emph{Global methods in homotopy theory}, Homotopy theory
  ({D}urham, 1985), London Math. Soc. Lecture Note Ser., vol. 117, Cambridge
  Univ. Press, Cambridge, 1987, pp.~73--96. \MR{0932260}

\bibitem[HPS97]{Hovey/Palmieri/Strickland:1997}
Mark Hovey, John~H. Palmieri, and Neil~P. Strickland, \emph{Axiomatic stable
  homotopy theory}, vol. 128, Mem. Amer. Math. Soc., no. 610, Amer.Math. Soc.,
  Providence, RI., 1997. \MR{1388895}

\bibitem[INP24]{Iyama/Nakaoka/Palu:2024}
Osamu Iyama, Hiroyuki Nakaoka, and Yann Palu, \emph{Auslander-{R}eiten theory
  in extriangulated categories}, Trans. Amer. Math. Soc. Ser. B \textbf{11}
  (2024), 248--305. \MR{4697926}

\bibitem[Joh82]{Johnstone:1982}
Peter~T. Johnstone, \emph{Stone spaces}, Cambridge Studies in Advanced
  Mathematics, vol.~3, Cambridge University Press, Cambridge, 1982. \MR{698074}

\bibitem[Kap58]{Kaplansky:1958}
Irving Kaplansky, \emph{Projective modules}, Ann. of Math. (2) \textbf{68}
  (1958), 372--377. \MR{100017}

\bibitem[Kel64]{Kelly:1964}
G.~Maxwell Kelly, \emph{On {M}ac{L}ane's conditions for coherence of natural
  associativities, commutativities, etc}, J. Algebra \textbf{1} (1964),
  397--402. \MR{0182649}

\bibitem[KP17]{Kock/Pitsch:2017}
Joachim Kock and Wolfgang Pitsch, \emph{Hochster duality in derived categories
  and point-free reconstruction of schemes}, Trans. Amer. Math. Soc.
  \textbf{369} (2017), no.~1, 223--261. \MR{3557773}

\bibitem[Kra00]{Krause:2000}
Henning Krause, \emph{Smashing subcategories and the telescope conjecture---an
  algebraic approach}, Invent. Math. \textbf{139} (2000), no.~1, 99--133.
  \MR{1728877}

\bibitem[Kra24]{Krause:2024}
\bysame, \emph{An analogue of {S}tone duality via support}, J. Pure Appl.
  Algebra \textbf{228} (2024), no.~6, 107621. \MR{4697989}

\bibitem[KS06]{Kashiwara/Schapira:2006}
Masaki Kashiwara and Pierre Schapira, \emph{Categories and sheaves},
  Grundlehren Math. Wiss., vol. 332, Springer-Verlag, Berlin, 2006.
  \MR{2182076}

\bibitem[Lam01]{Lam:2001}
Tsit~Yuen Lam, \emph{A first course in noncommutative rings}, second ed., Grad.
  Texts in Math., vol. 131, Springer-Verlag, New York, 2001. \MR{1838439}

\bibitem[Lan02]{Lang:2002}
Serge Lang, \emph{Algebra}, third ed., Grad. Texts in Math., vol. 211,
  Springer-Verlag, New York, 2002. \MR{1878556}

\bibitem[LMSM86]{Lewis/May/Steinberger:1986}
L.~Gaunce Lewis, Jr., J.~Peter May, Mark Steinberger, and James~E. McClure,
  \emph{Equivariant stable homotopy theory}, Lecture Notes in Math., vol. 1213,
  Springer-Verlag, Berlin, 1986, With contributions by J. E. McClure.
  \MR{0866482}

\bibitem[LN19]{Liu/Nakaoka:2019}
Yu~Liu and Hiroyuki Nakaoka, \emph{Hearts of twin cotorsion pairs on
  extriangulated categories}, J. Algebra \textbf{528} (2019), 96--149.
  \MR{3928292}

\bibitem[Lor21]{Loregian:2021}
Fosco Loregian, \emph{({C}o)end calculus}, London Math. Soc. Lecture Note Ser.,
  vol. 468, Cambridge University Press, Cambridge, 2021. \MR{4274071}

\bibitem[LS69]{Larson/Sweedler:1969}
Richard~Gustavus Larson and Moss~Eisenberg Sweedler, \emph{An associative
  orthogonal bilinear form for {H}opf algebras}, Amer. J. Math. \textbf{91}
  (1969), 75--94. \MR{240169}

\bibitem[Mar83]{Margolis:1983}
Harvey~R. Margolis, \emph{Spectra and the {S}teenrod algebra}, North-Holland
  Math. Library, vol.~29, North-Holland Publishing Co., Amsterdam, 1983.
  \MR{738973}

\bibitem[ML95]{MacLane:1995}
Saunders Mac~Lane, \emph{Homology}, Classics in Mathematics, Springer-Verlag,
  Berlin, 1995, Reprint of the 1975 edition. \MR{1344215}

\bibitem[ML98]{MacLane:1998}
\bysame, \emph{Categories for the working mathematician}, second ed., Grad.
  Texts in Math., vol.~5, Springer-Verlag, New York, 1998. \MR{1712872}

\bibitem[Nee92]{Neeman:1992b}
Amnon Neeman, \emph{The chromatic tower for {$D(R)$}}, Topology \textbf{31}
  (1992), no.~3, 519--532, With an appendix by Marcel B{\"o}kstedt.
  \MR{1174255}

\bibitem[NOS22]{Nakaoka/Ogawa/Sakai:2022}
Hiroyuki Nakaoka, Yasuaki Ogawa, and Arashi Sakai, \emph{Localization of
  extriangulated categories}, J. Algebra \textbf{611} (2022), 341--398.
  \MR{4474640}

\bibitem[NP19]{Nakaoka/Palu:2019}
Hiroyuki Nakaoka and Yann Palu, \emph{Extriangulated categories, {H}ovey twin
  cotorsion pairs and model structures}, Cah. Topol. G{\'e}om. Diff{\'e}r.
  Cat{\'e}g. \textbf{60} (2019), no.~2, 117--193. \MR{3931945}

\bibitem[OR70]{Oberst-Rohrl:1970}
Ulrich Oberst and Helmut R{\"o}hrl, \emph{Flat and coherent functors}, J.
  Algebra \textbf{14} (1970), 91--105. \MR{257181}

\bibitem[Pre84]{Prest:1984}
Mike Prest, \emph{Rings of finite representation type and modules of finite
  {M}orley rank}, J. Algebra \textbf{88} (1984), no.~2, 502--533. \MR{747530}

\bibitem[Qui73]{Quillen:1973}
Daniel Quillen, \emph{Higher algebraic {$K$}-theory. {I}}, Algebraic
  {$K$}-theory, {I}: Higher {$K$}-theories, {P}roc. of the {C}onf. ({S}eattle,
  1972) (Hyman Bass, ed.), Lecture Notes in Math., vol. 341, Springer, Berlin,
  1973, pp.~85--147. \MR{0338129}

\bibitem[Sim79]{simson-cats-of-reps-of-species}
Daniel Simson, \emph{Categories of representations of species}, J. Pure Appl.
  Algebra \textbf{14} (1979), no.~1, 101--114. \MR{515489}

\bibitem[Tho97]{Thomason:1997}
Robert~W. Thomason, \emph{The classification of triangulated subcategories},
  Compos. Math. \textbf{105} (1997), no.~1, 1--27. \MR{1436741}

\bibitem[Ulb90]{Ulbrich:1990}
Karl-Heinz Ulbrich, \emph{On {H}opf algebras and rigid monoidal categories},
  Israel J. Math. \textbf{72} (1990), no.~1-2, 252--256, Hopf algebras.
  \MR{1098991}

\bibitem[Ver96]{Verdier:1996}
Jean-Louis Verdier, \emph{Des cat{\'e}gories d{\'e}riv{\'e}es des
  cat{\'e}gories ab{\'e}liennes}, Ast{\'e}risque, no. 239, Soc. Math. France,
  Paris, 1996 (French), With a preface by Luc Illusie, Edited and with a note
  by Georges Maltsiniotis. \MR{1453167}

\bibitem[Wei13]{Weibel:2013}
Charles~A. Weibel, \emph{The {$K$}-book, an introduction to algebraic
  $k$-theory}, Graduate Studies in Mathematics, vol. 145, American Mathematical
  Society, Providence, RI, 2013. \MR{3076731}

\bibitem[Yon54]{Yoneda:1954}
Nobuo Yoneda, \emph{On the homology theory of modules}, J. Fac. Sci. Univ.
  Tokyo Sect. I \textbf{7} (1954), 193--227. \MR{68832}

\bibitem[Yon60]{Yoneda:1960}
\bysame, \emph{On {E}xt and exact sequences}, J. Fac. Sci. Univ. Tokyo Sect. I
  \textbf{8} (1960), 507--576. \MR{225854}

\bibitem[Yos90]{Yoshino:1990}
Yuji Yoshino, \emph{Cohen--{M}acaulay modules over {C}ohen--{M}acaulay rings},
  London Math. Soc. Lecture Note Ser., vol. 146, Cambridge Univ. Press,
  Cambridge, 1990. \MR{1079937}

\bibitem[Yos98]{Yoshino:1998}
\bysame, \emph{Tensor products of matrix factorizations}, Nagoya Math. J.
  \textbf{152} (1998), 39--56. \MR{1659389}

\end{thebibliography}

\end{document}